\documentclass[leqno]{siamltex}
\usepackage{amssymb}
\usepackage{amsbsy}
\usepackage{newlfont}
\usepackage{amsmath}
\usepackage[normalem]{ulem}
\usepackage{psfrag}
\usepackage{graphicx}
\usepackage{color}  
\usepackage{xspace,float,stackrel,enumerate,subfigure}
\usepackage{bm}
\usepackage{bbm}
\usepackage{mathrsfs} 
\usepackage{remarks}
\usepackage{mydef}
\usepackage[numbers]{natbib}

\usepackage{adjustbox}

\usepackage{tikz}
\usepackage{pgfplots}
\usepackage{dsfont}
\usepackage{caption}
\let\cite=\citet

\usepackage{algorithm,algorithmic}
\usepackage{color} 
\usepackage{etoolbox}
\usepackage{marginnote}
\makeatletter
\patchcmd{\@mn@margintest}{\@tempswafalse}{\@tempswatrue}{}{}
\patchcmd{\@mn@margintest}{\@tempswafalse}{\@tempswatrue}{}{}
\makeatother
\newcounter{mnote}
 \setcounter{mnote}{0}
 

\begin{document}
\newcommand\footnotemarkfromtitle[1]{%
\renewcommand{\thefootnote}{\fnsymbol{footnote}}%
\footnotemark[#1]%
\renewcommand{\thefootnote}{\arabic{footnote}}}

\title{Invariant domains preserving ALE \\ approximation of hyperbolic
  systems with \\ continuous finite elements
  \footnotemark[1]}

\author{Jean-Luc Guermond\footnotemark[2]
\and Bojan Popov\footnotemark[2]
\and Laura Saavedra\footnotemark[3]
\and Yong Yang\footnotemark[2] }
\date{Draft version \today}

\maketitle

\renewcommand{\thefootnote}{\fnsymbol{footnote}} \footnotetext[1]{
  This material is based upon work supported in part by the National
  Science Foundation grants DMS-1217262, by the Air
  Force Office of Scientific Research, USAF, under grant/contract
  number FA99550-12-0358, and by the Army Research Office under grant/contract
  number W911NF-15-1-0517.  Draft
  version, \today} \footnotetext[2]{Department of Mathematics, Texas
  A\&M University 3368 TAMU, College Station, TX 77843, USA.}
\footnotetext[3]{Departamento Fundamentos Matem\'aticos, Universidad Polit\'ecnica de Madrid, E.T.S.I. Aeron\'auticos, 28040 Madrid, Spain}
\renewcommand{\thefootnote}{\arabic{footnote}}

\begin{abstract}
  A conservative invariant domain preserving Arbitrary Lagrangian
  Eulerian method for solving nonlinear hyperbolic systems is
  introduced.  The method is explicit in time, works with continuous
  finite elements and is first-order accurate in space. One
  originality of the present work is that the artificial viscosity is
  unambiguously defined irrespective of the mesh
  geometry/anisotropy and does not depend on any ad hoc parameter.  The
  proposed method is meant to be a stepping stone for the construction
  of higher-order methods in space by using appropriate limitation
  techniques.
\end{abstract}

\begin{keywords}
  Conservation  equations, hyperbolic systems, Arbitrary Lagrangian Eulerian, moving schemes,
invariant domain, first-order method, finite element method.
\end{keywords}

\begin{AMS}
65M60, 65M10, 65M15, 35L65
\end{AMS}

\pagestyle{myheadings} \thispagestyle{plain} 
\markboth{J.-L. GUERMOND,  B. POPOV, L. SAAVEDRA, Y. YANG }{Invariant domain preserving ALE}

\section{Introduction} \label{Sec:introduction} 
Consider the following
hyperbolic system in conservative form
\begin{equation}
 \label{def:hyperbolic_system} 
 \begin{cases} \partial_t \bu + \DIV \bef(\bu)=0, 
\quad \mbox{for}\, (\bx,t)\in \Real^d\CROSS\Real_+.\\
\bu(\bx,0) = \bu_0(\bx), \quad \mbox{for}\, \bx\in \Real^d,
\end{cases}
\end{equation}
where the dependent variable $\bu$ is $\Real^m$-valued and the flux
$\bef$ is $\Real^{m\CROSS d}$-valued.  The objective of this paper is
to investigate an approximation technique for solving
\eqref{def:hyperbolic_system} using an Arbitrary Lagrangian Eulerian
(ALE) formulation with continuous finite elements and explicit time
stepping on non-uniform meshes in any space dimension.

The interest for finite elements in the context of compressible
Lagrangian hydrodynamics has been recently revived by the work of
\cite{Dobrev12}, where the authors have demonstrated that high-order
finite elements have good properties in terms of geometry
representation, symmetry preservation, resolution of shock fronts and
high-order convergence rate for smooth solutions. The finite element
formalism has been combined with staggered grid hydrodynamics methods
in \cite{Barlow07,Scovazzi08} and with cell-centered hydrodynamics
methods in \cite{Vilar14} in the form of a Discontinuous Galerkin
scheme. One common factor of all these papers is that the
stabilization is done by introducing some artificial viscosity to
control post-shock oscillations, and high-order convergence in space
is achieved by restricting the diffusion to be active only in the
singular regions of the solution. This can be done in many ways, for
instance by measuring smoothness like in the ENO/WENO literature like
in \cite{Cheng_Shu_2014} or by using entropies like in the entropy
viscosity methodology of \cite{Guermond_Popov_Tomov_2016}. A detailed
list of requirements and specific artificial viscosity expressions for
Lagrangian hydrodynamics have been proposed by
\cite{CaramanaWhalen98}, \cite{Caramana_Loubere_JCP_2006},
\cite{Campbell01, Kolev09}, \cite{Lipnikov_Shashkov_2010}. The
artificial viscosity can also be implicitly introduced by using
Godunov type or cell-centered type methods based on Riemann solvers,
see \eg \cite{Bocheri_Dumbser_2014,Carre_Despres_2009}.

In the present paper we revisit the artificial viscosity problem for
general hyperbolic systems like \eqref{def:hyperbolic_system} using an
Arbitrary Lagrangian Eulerian (ALE) formulation and explicit
  time stepping. The originality of the present work is that (i) The
  approximation in space is done with continuous finite elements of
  arbitrary order; (ii) The local shape functions can be linear
  Lagrange elements, Bernstein-Bezier elements of any order or any
  other nonnegative functions (not necessarily polynomials) that have
  the partition of unity property; (iii) The finite element meshes
  considered are non-uniform, curvilinear and the space dimension is
  arbitrary; (iv) The artificial viscosity is unambiguously defined
  irrespective of the mesh geometry/anisotropy, does not depend on any
  ad hoc parameter, contrary to what has been previously done in the
  finite element literature, and leads to precise invariant domain
  properties and entropy inequalities; (v) The methods works for {\em
    any} (reasonable in the sense of \S\ref{Sec:Riemann}) hyperbolic
  system.  Although entirely based on continuous finite elements, our
work is deeply rooted in the work of \cite{Lax54} and
\cite{Hoff_1985}, and in some sense mimics well established
  schemes from the finite volume literature,
  \eg~\cite{Guillard_Farhat_2000,Farhat_Geuzaine_Grandmont_2001} or
  the discontinuous Galerkin literature, \eg \cite{Vilar14}.  The
proposed method is meant to be a stepping stone for the construction
of higher-order methods in space by using, for instance, the flux
transport correction methodology \`a la Boris-Book-Zalesak, or any
generalization thereof, to implement limitation. None of these
  generalizations are discussed in the paper. The sole objective of
  the present work is to give a firm, undisputable, theoretical
  footing to the first-order, invariant-domain preserving, method.

The paper is organized as follows. We introduce some notation and
recall important properties about the one-dimensional Riemann problem
in \S\ref{Sec:Riemann}. We introduce notation relative to mesh motion
and Lagrangian mappings in \S\ref{Sec:Preliminaries}.  The results
established in \S\ref{Sec:Riemann} and \S\ref{Sec:Preliminaries} are
standard and will be invoked in \S\ref{Sec:ALE_scheme} and
\S\ref{Sec:stability_analysis}.  The reader who is familiar with these
notions is invited to go directly to \S\ref{Sec:ALE_scheme}. We
describe in \S\ref{Sec:ALE_scheme} two versions of an ALE algorithm to
approximate the solution of \eqref{def:hyperbolic_system}. The first
algorithm, henceforth referred to as version~1, is composed of the
steps
\eqref{alg1_motion_of_ai}-\eqref{alg1_def_of_fmi}-\eqref{def_of_scheme_dij}-\eqref{def_of_dij}.
The second algorithm, \ie version~2, is composed of the steps
\eqref{motion_of_ai_tln}-\eqref{def_of_cij_gauss}-\eqref{def_mass_gauss}-\eqref{def_of_scheme_dij_gauss}-\eqref{def_of_dij}. The
key difference between these two algorithms is the way the mass
carried by the shape functions is updated (compare
\eqref{alg1_def_of_fmi} and \eqref{def_mass_gauss}).  Only version~1
can be easily made high-order in time by means of the strong stability
preserving technology (see
\cite{Ferracina_Spijker_2005,Higueras_2005,Gottlieb_Ketcheson_Shu_2009}
for details on SSP techniques).  It is proved in
\S\ref{Sec:stability_analysis} that under the appropriate CFL
condition both algorithms are invariant domain preserving,
conservative and satisfy a local entropy inequality for any admissible
entropy pair. The main results of this section are
Theorem~\ref{Thm:invariant_domain} and
Theorem~\ref{Thm:disrete_entropy_inequality}.  The SSP RK3 extension
of scheme~1 is tested numerically in \S\ref{Sec:numerical_tests} on
scalar conservation equations and on the compressible Euler equations
using two different finite element implementations of the method.  In
all the cases the ALE velocity is ad hoc and no particular effort has
been made to optimize this quantity. The purpose of this paper is not
to design an optimal ALE velocity but to propose an algorithm that is
conservative and invariant domain preserving for {\em any} reasonable
ALE velocity.

\section{Riemann problem and invariant domain} \label{Sec:Riemann}
We recall in this section elementary properties of Riemann problems that will be used
in the paper.
\subsection{Notation and boundary conditions}
In this paper the dependent variable $\bu$ in
\eqref{def:hyperbolic_system} is considered as a column vector
$\bu=(u_1,\ldots,u_m)\tr$. The flux is a matrix with entries
$f_{ij}(\bu)$, $1\le i\le m$, $1\le j\le d$. We denote $\bef_i$ the
row vector $(f_{i1},\ldots,f_{id})$, $i\in\intset{1}{m}$.  We denote
by $\DIV\bef$ the column vector with entries $(\DIV\bef)_i= \sum_{1\le
  j\le d}\partial_{x_j} f_{ij}$. For any $\bn=(n_1\ldots,n_d)\tr\in
\Real^d$, we denote $\bef(\bu)\SCAL\bn$ the column vector with entries
$\bef_i(\bu)\SCAL \bn =\sum_{1\le l\le d} n_lf_{il}(\bu)$, where
$i\in\intset{1}{m}$. Given two vector fields, say $\bu\in \Real^m$ and
$\bv\in \Real^d$, we define $\bu\otimes \bv$ to be the $m\CROSS d$
matrix with entries $u_iv_j$, $i\in\intset{1}{m}$,
$j\in\intset{1}{d}$. We also define $\DIV(\bu\otimes \bv)$ to be the
column vector with entries $\DIV(\bu\otimes \bv)_i=
\sum_{j=1}^d\partial_j (u_i v_j)$.  The unit sphere in $\Real^d$
centered at $0$ is denoted by $S^{d-1}(\bzero,1)$.

To simplify questions regarding boundary conditions, we assume that
the initial data is constant outside a compact set and we solve the
Cauchy problem in $\Real^d$ or we use periodic boundary conditions.

\subsection{One-dimensional Riemann problem}
We are not going to try to define weak solutions to
\eqref{def:hyperbolic_system}, but instead we assume that there is a
clear notion for the solution of the Riemann problem. 
To stay general we introduce a generic hyperbolic flux $\bh$ and we say that $(\eta,\bq)$ is an entropy pair associated
with the flux $\bh$ if $\eta$ is convex and the following identity holds:
\begin{equation}
\partial_{v_k} (\bq(\bv)\SCAL \bn) = \sum_{i=1}^m \partial_{v_i}\eta(\bv) 
\partial_{v_k}(\bh_i(\bv)\SCAL \bn), \qquad \forall k\in\intset{1}{m},\ 
\forall \bn\in S^{d-1}(\bzero,1). \label{def_entropy}
\end{equation}
We refer to \cite[\S2]{Chen_2005} for more details on convex entropies
and symmetrization.  In the rest of the paper we assume that there
exists a nonempty admissible set $\calA_\bh\subset\Real^{m}$ such that
the following one-dimensional Riemann problem
\begin{equation}
 \label{def:Riemann_problem} 
  \partial_t \bu + \partial_x (\bh(\bu)\SCAL\bn)=0, 
\quad  (x,t)\in \Real\CROSS\Real_+,\qquad 
\bu(x,0) = \begin{cases} \bu_L, & \text{if $x<0$} \\ \bu_R,  & \text{if $x>0$}, \end{cases}
\end{equation}
has a unique entropy satisfying solution for any pair of states
$(\bu_L,\bu_R)\in \calA_\bh\CROSS \calA_\bh$ and any unit vector $\bn\in
S^{d-1}(\bzero,1)$. We henceforth denote the solution to this problem
by $\bu(\bh,\bn,\bu_L,\bu_R)$. We also say that $\bu$ is an entropy satisfying solution of
\eqref{def:Riemann_problem} if the following holds in the distribution
sense
\begin{equation}
\partial_t \eta(\bu) + \partial_x (\bq(\bu)\SCAL\bn) \le 0.
\end{equation}
for any entropy pair $(\eta,\bq)$.

It is unrealistic to expect a general theory of the Riemann problem~\eqref{def:Riemann_problem} 
for arbitrary nonlinear hyperbolic systems with large data, we
henceforth make the following assumption:
\begin{align}
&\begin{aligned}
&\text{The unique solution of \eqref{def:Riemann_problem} has a finite speed of} \\[-3pt]
&\text{propagation for any $\bn$ and any $(\bu_L,\bu_R)\in \calA_\bh\CROSS\calA_\bh$, \ie}\\[-3pt]
&\text{there are $\lambda_{L}(\bh,\bn,\bu_L,\bu_R)\le \lambda_{R}(\bh,\bn,\bu_L,\bu_R)$ such}\\
&\bu(x,t)= \begin{cases}
\bu_L, & \text{if $x \le t \lambda_{L}(\bh,\bn,\bu_L,\bu_R)$}\\
\bu_R, & \text{if $x \ge   t \lambda_{R}(\bh,\bn,\bu_L,\bu_R)$}.
\end{cases} 
\end{aligned} \label{finite_speed}
\end{align}
This assumption is known to hold for small data when the system is
strictly hyperbolic with smooth flux and all the characteristic fields
are either genuinely nonlinear or linearly degenerate. More precisely
there exists $\delta>0$ such that the Riemann problem has a unique
self-similar weak solution in Lax's form for any initial data such
that $\|\bu_L-\bu_R\|_{\ell^2}\le \delta$, see \cite{Lax_1957_II} and
\cite[Thm~5.3]{Bressan_2000}. In particular there are $2m$ numbers
$\lambda_1^-\le \lambda_1^+ \le  \lambda_2^-\le \lambda_2^+ \le \ldots \le 
\lambda_m^-\le \lambda_m^+$,
defining up to $2m+1$ sectors (some could be empty) in the $(x,t)$ plane:
\begin{equation}
\frac{x}{t}\in (-\infty,\lambda_1^-), \quad
\frac{x}{t}\in (\lambda_1^-,\lambda_1^+), \ldots, \quad
\frac{x}{t}\in (\lambda_m^-,\lambda_m^+), \quad \frac{x}{t}\in (\lambda_m^+,\infty). 
\end{equation}
where the Riemann solution is $\bu_L$ in the sector $\frac{x}{t}\in
(-\infty,\lambda_1^-)$, $\bu_R$ in the last sector $\frac{x}{t}\in
(\lambda_m^+,\infty)$, and either a constant state or an expansion in
the other sectors, see \citep[Chap.~5]{Bressan_2000}.  
In this case we have $\lambda_{L}:=\lambda_{L}(\bh,\bn,\bu_L,\bu_R)=\lambda_1^-$
and $\lambda_{R}:=\lambda_{R}(\bh,\bn,\bu_L,\bu_R)=\lambda_m^+$. The sector
$\lambda_1^- t < x < \lambda_m^+ t$, $0<t$, is henceforth referred to
as the Riemann fan.  The maximum wave speed in the Riemann fan is
$\lambda_{\max}:=\lambda_{\max}(\bh,\bn,\bu_L,\bu_R):= \max(|\lambda_{L}|,
|\lambda_{R}|)$. For brevity, when there is no ambiguity, we will omit the dependence of 
$\lambda_{L}, \lambda_{R}$ and $\lambda_{\max}$ on the parameters $\bh,\bn,\bu_L,\bu_R$.
The finite speed assumption \eqref{finite_speed} holds in the case of strictly
hyperbolic systems that may have characteristic families that are
either not genuinely nonlinear or not linearly degenerate, see \eg
\cite[Thm.~9.5.1]{Dafermos_2000}.  
%
\subsection{Invariant sets and domains}
The following elementary result is an important, well-known,
consequence of the Riemann fan assumption~\eqref{finite_speed}:
\begin{lemma} \label{Lem:elementary_Riemann_pb} Let 
$\bh$ be a hyperbolic flux over the admissible set $\calA_\bh$ and
satisfying the finite wave speed assumption \eqref{finite_speed}.
Let $\bv(\bh,\bn,\bv_L,\bv_R)$ be the  unique solution to the problem
  $\partial_t \bv +\partial_x(\bh(\bv)\SCAL\bn)=0$
with initial data $\bv_L,\bv_R \in
  \calA_\bh$. Let $(\eta,\bq)$ be an entropy pair associated with the flux
  $\bh$. Assume that $t\,
  \lambda_{\max}(\bh,\bn,\bv_L,\bv_R) \le \frac12$
and let $ \overline\bv(t,\bh,\bn,\bv_L,\bv_R) :=\int_{-\frac12}^{\frac12}
  \bv(\bh,\bn,\bv_L,\bv_R)(x,t) \diff \bx $, then
\begin{align}
  \overline\bv(t,\bh,\bn,\bv_L,\bv_R)  &= \tfrac{1}{2}(\bv_L+\bv_R) 
  - t\big(\bh(\bv_R)\SCAL\bn - \bh(\bv_L)\SCAL\bn\big).
\label{elementary_Riemann_pb}\\
  \eta(\overline\bv(t,\bh,\bn,\bv_L,\bv_R) )
&\le \tfrac{1}{2}(\eta(\bv_L)+\eta(\bv_R)) 
  - t(\bq(\bv_R)\SCAL\bn - \bq(\bv_L)\SCAL\bn).
\label{entropy_elementary_Riemann_pb}
\end{align}
\end{lemma}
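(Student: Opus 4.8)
The plan is to integrate the conservation law and the entropy inequality over the space--time control volume $[-\tfrac12,\tfrac12]\CROSS[0,t]$ and to use the finite speed assumption \eqref{finite_speed} to identify the contributions coming from the two lateral walls $x=\pm\tfrac12$. The key feature of the normalization is that the spatial slab $[-\tfrac12,\tfrac12]$ has unit Lebesgue measure, so that $\diff x$ restricted to it is a probability measure; this is exactly what makes Jensen's inequality directly applicable in the entropy part, and it is the reason the factors $\tfrac12$ appear in \eqref{elementary_Riemann_pb}--\eqref{entropy_elementary_Riemann_pb}.

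First I would establish \eqref{elementary_Riemann_pb}. Writing the conservation law $\partial_t\bv+\partial_x(\bh(\bv)\SCAL\bn)=0$ in integral (Gauss--Green) form over $[-\tfrac12,\tfrac12]\CROSS[0,t]$ gives
\begin{equation*}
\int_{-\frac12}^{\frac12}\bv(x,t)\diff x - \int_{-\frac12}^{\frac12}\bv(x,0)\diff x + \int_0^t \bh(\bv(\tfrac12,s))\SCAL\bn\diff s - \int_0^t \bh(\bv(-\tfrac12,s))\SCAL\bn\diff s = 0.
\end{equation*}
The first term is $\overline\bv(t)$ by definition, and the second is $\tfrac12(\bv_L+\bv_R)$ since the initial data equals $\bv_L$ on $(-\tfrac12,0)$ and $\bv_R$ on $(0,\tfrac12)$, each interval having length $\tfrac12$. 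To evaluate the two lateral fluxes I would invoke \eqref{finite_speed}: for every $s\in[0,t]$ one has $s\lambda_{R}\le s\lambda_{\max}\le t\lambda_{\max}\le\tfrac12$ and $s\lambda_{L}\ge -s\lambda_{\max}\ge -t\lambda_{\max}\ge-\tfrac12$ (using $\lambda_R\le\lambda_{\max}$ and $\lambda_L\ge-\lambda_{\max}$), so the Riemann fan has not reached the walls and $\bv(\tfrac12,s)=\bv_R$, $\bv(-\tfrac12,s)=\bv_L$ throughout. The two time integrals then reduce to $t\,\bh(\bv_R)\SCAL\bn$ and $t\,\bh(\bv_L)\SCAL\bn$, and rearranging yields \eqref{elementary_Riemann_pb}.

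Next I would treat \eqref{entropy_elementary_Riemann_pb}. Because $\bv$ is entropy satisfying, $\partial_t\eta(\bv)+\partial_x(\bq(\bv)\SCAL\bn)\le0$ holds in the distribution sense; integrating its integral form over the same control volume and reusing the identical boundary-value argument gives
\begin{equation*}
\int_{-\frac12}^{\frac12}\eta(\bv(x,t))\diff x \le \tfrac12(\eta(\bv_L)+\eta(\bv_R)) - t\big(\bq(\bv_R)\SCAL\bn-\bq(\bv_L)\SCAL\bn\big).
\end{equation*}
It then remains to bound the left-hand side below by $\eta(\overline\bv(t))$. Since $\eta$ is convex and $\diff x$ is a probability measure on $[-\tfrac12,\tfrac12]$, Jensen's inequality gives $\eta(\overline\bv(t))=\eta\big(\int_{-1/2}^{1/2}\bv(x,t)\diff x\big)\le\int_{-1/2}^{1/2}\eta(\bv(x,t))\diff x$, and combining the two displays proves \eqref{entropy_elementary_Riemann_pb}.

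The only delicate point is the rigorous passage to the integral form of the conservation law and of the entropy inequality for a solution that is merely a bounded weak solution carrying shocks and rarefactions. I expect this to be the main obstacle, though a mild one: the Riemann solution is self-similar and of bounded variation, so one may either apply the divergence theorem for $BV$ fields on the rectangle or, equivalently, approximate the characteristic function of $[-\tfrac12,\tfrac12]\CROSS[0,t]$ by smooth test functions in the weak formulation and pass to the limit. In either case the lateral terms are unambiguous because $\bv$ is locally constant, equal to $\bv_L$ and $\bv_R$, in a neighborhood of the walls by the finite speed argument above.
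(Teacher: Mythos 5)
Your proof is correct: integrating the conservation law over the slab $[-\tfrac12,\tfrac12]\CROSS[0,t]$, using the finite-speed assumption to reduce the lateral fluxes to $t\,\bh(\bv_{L,R})\SCAL\bn$, and applying Jensen's inequality (valid because $[-\tfrac12,\tfrac12]$ has unit measure) for the entropy part is exactly the standard argument. The paper itself offers no proof of this lemma---it is stated as an elementary, well-known consequence of \eqref{finite_speed}---so there is nothing to compare against; your write-up, including the remark on justifying the integral form for a self-similar $BV$ solution, fills that gap correctly.
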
%

We now introduce the notions of invariant sets. The definitions that
we adopt are slightly different from what is usually done in the
literature (see \eg in
\cite{Chueh_Conley_Smoller,Hoff_1985,Frid_2001}.
\begin{definition}[Invariant set] \label{Def:invariant_set} Let $\bh$
  be a hyperbolic flux over the admissible set $\calA_\bh$ and
  satisfying the finite wave speed assumption \eqref{finite_speed}.
  We say that a convex set $A\subset \calA_\bh\subset \Real^m$ is
  invariant for the problem $\partial_t \bv +\DIV \bh(\bv)=0$ if for
  any pair $(\bv_L,\bv_R)\in A\CROSS A$, any unit vector $\bn\in
  \calS^{d-1}(\bzero,1)$, the average of the entropy solution of the
  Riemann problem $\partial_t \bv +\DIV (\bh(\bv)\SCAL \bn)=0$ over
  the Riemann fan $\frac{1}{t(\lambda_{R}-\lambda_{L})}
  \int_{\lambda_{L}t}^{\lambda_{R}t} \bv(\bh,\bn,\bv_L,\bv_R)(x,t)
  \diff \bx$, remains in $A$ for all $t>0$.
\end{definition}

\begin{remark} \label{Rem:Riemann_fan_average}
The above definition implies that
$\frac{1}{I} \int_{I} \bv(\bh,\bn,\bv_L,\bv_R)(x,t) \diff \bx\in A$ for
any $t>0$ and any interval $I$ such that
$(\lambda_{L}t,\lambda_{R}t)\subset I$.
\end{remark}

\begin{lemma}[Translation] \label{Lem:same_invariant_sets}
  Let $\bsfW\in\Real^d$ and let
  $\bg(\bv) := \bef(\bv) - \bv\otimes\bsfW$.
\begin{enumerate}[(i)]
\item \label{Lem:same_invariant_sets:item1} The two problems:
  $\partial_t \bu + \DIV\bef(\bu)=0$ and $\partial_t \bv +
  \DIV\bg(\bv)=0$ have the same admissible sets and the same invariant
  sets.
\item \label{Lem:same_invariant_sets:item2} $(\eta(\bu),\bq(\bu))$ is
  an entropy pair for the flux $\bef$ if and only if
  $(\eta(\bv),\bq(\bv)-\eta(\bv)\bsfW)$ is an entropy pair for the
  flux $\bg$.
\end{enumerate}
\end{lemma}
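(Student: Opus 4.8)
The plan is to reduce both statements to a Galilean shift in the one-dimensional Riemann problem. Projecting the translated flux onto an arbitrary unit vector $\bn$ and using $(\bv\otimes\bsfW)\SCAL\bn=(\bsfW\SCAL\bn)\bv$, I note that with the scalar $w:=\bsfW\SCAL\bn$ one has $\bg(\bv)\SCAL\bn=\bef(\bv)\SCAL\bn-w\bv$, so that $\partial_t\bv+\partial_x(\bg(\bv)\SCAL\bn)=0$ differs from $\partial_t\bu+\partial_x(\bef(\bu)\SCAL\bn)=0$ only through the constant-coefficient advection term $-w\partial_x\bv$. A direct chain-rule check then shows that if $\bu(\bef,\bn,\bv_L,\bv_R)(x,t)$ is the entropy solution of the $\bef$-problem with data $(\bv_L,\bv_R)$, then $(x,t)\mapsto\bu(\bef,\bn,\bv_L,\bv_R)(x+wt,t)$ is a weak solution of the $\bg$-problem with the same data; I argue below that it is in fact its entropy solution.

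For item~(i), this shift rigidly translates the whole wave pattern: each wave of the Riemann fan that travels at speed $\lambda$ for $\bef$ travels at speed $\lambda-w$ for $\bg$. Writing $\lambda_L^{\bef},\lambda_R^{\bef}$ and $\lambda_L^{\bg},\lambda_R^{\bg}$ for the respective fan-edge speeds, this gives $\lambda_L^{\bg}=\lambda_L^{\bef}-w$ and $\lambda_R^{\bg}=\lambda_R^{\bef}-w$; in particular the fan width $\lambda_R^{\bg}-\lambda_L^{\bg}=\lambda_R^{\bef}-\lambda_L^{\bef}$ is unchanged and assumption~\eqref{finite_speed} transfers verbatim. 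The substitution $y=x+wt$ maps $(\lambda_L^{\bg}t,\lambda_R^{\bg}t)$ onto $(\lambda_L^{\bef}t,\lambda_R^{\bef}t)$ with unit Jacobian, whence
\[
\int_{\lambda_L^{\bg}t}^{\lambda_R^{\bg}t}\bv(\bg,\bn,\bv_L,\bv_R)(x,t)\diff x=\int_{\lambda_L^{\bef}t}^{\lambda_R^{\bef}t}\bu(\bef,\bn,\bv_L,\bv_R)(y,t)\diff y.
\]
Since the normalizing fan widths also coincide, the two Riemann-fan averages of Definition~\ref{Def:invariant_set} are equal, and a convex set $A$ is invariant for one problem exactly when it is invariant for the other.

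For item~(ii), I would argue purely algebraically from the defining identity~\eqref{def_entropy}. Setting $\bar\bq:=\bq-\eta\bsfW$, I have $\bar\bq\SCAL\bn=\bq\SCAL\bn-w\eta$ and $\bg_i(\bv)\SCAL\bn=\bef_i(\bv)\SCAL\bn-w v_i$. Differentiating in $v_k$, the left-hand side of \eqref{def_entropy} for the pair $(\eta,\bar\bq)$ and flux $\bg$ acquires an extra term $-w\partial_{v_k}\eta$, while the right-hand side acquires $-w\sum_{i=1}^m\partial_{v_i}\eta\,\partial_{v_k}v_i=-w\partial_{v_k}\eta$ because $\partial_{v_k}v_i=\delta_{ik}$. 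These two contributions cancel, so \eqref{def_entropy} holds for $(\eta,\bar\bq,\bg)$ if and only if it holds for $(\eta,\bq,\bef)$; convexity is untouched since $\eta$ itself is unchanged. The converse direction is the same computation with $\bsfW$ replaced by $-\bsfW$.

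The computations are short, and the only genuinely delicate point is in item~(i): one must make sure the Galilean-shifted function is not merely a weak solution but the entropy solution singled out by \eqref{finite_speed}. Here item~(ii) does double duty, since it shows that the shift carries the entropy inequalities for $\bef$ into those for $\bg$ and conversely, so the shift is a bijection between the entropy solutions of the two problems. Uniqueness for $\bef$ then yields uniqueness for $\bg$, the two admissible sets coincide, and the fan-average identity above applies. Everything else is bookkeeping of the change of variables.
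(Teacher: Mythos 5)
Your proof is correct and follows essentially the same route as the paper: a Galilean shift $\bv(x,t)=\bu(x+(\bsfW\SCAL\bn)t,t)$ of the one-dimensional Riemann solution for item~(i), and a direct verification of the defining identity~\eqref{def_entropy} with the extra term $-(\bsfW\SCAL\bn)\partial_{v_k}\eta$ cancelling on both sides for item~(ii). The additional care you take in checking that the shifted function is the entropy solution and that the Riemann-fan averages coincide under the change of variables is detail the paper leaves implicit, not a different argument.
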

\begin{proof}
  \eqref{Lem:same_invariant_sets:item1} Given $\bn\in
  \calS^{d-1}(\bzero,1)$, the solutions of the Riemann problems
  $\partial_t \bu + \partial_x(\bef(\bu)\SCAL\bn)=0$ and $\partial_t
  \bv + \partial_x(\bg(\bv)\SCAL\bn)=0$, with the same initial data,
  are such that $\bv(x,t) = \bu(x+(\bsfW\SCAL\bn) t,t)$. Therefore the
  admissible sets and the invariant sets are identical.
  \eqref{Lem:same_invariant_sets:item2}
Let $\bn\in
  \calS^{d-1}(\bzero,1)$ and $k\in\intset{1}{d}$, then
using the definition \eqref{def_entropy}, we have
\begin{multline*}
\sum_{i=1}^m \partial_{v_i}\eta(\bv) \partial_{v_k}(\bef_i(\bv)\SCAL\bn - v_i \bsfW\SCAL \bn)
= \partial_{v_k}(\bq(\bv)\SCAL\bn)  
- \bsfW\SCAL \bn \sum_{i=1}^m \partial_{v_i}\eta(\bv) \partial_{v_k}(v_i) \\
= \partial_{v_k}(\bq(\bv)\SCAL\bn)
-  \bsfW\SCAL \bn \partial_{v_k}(\eta(\bv)) 
= \partial_{v_k}((\bq(\bv) - \eta(\bv)\bsfW)\SCAL\bn).
\end{multline*}
The conclusion follows readily.
\end{proof}

\section{Geometric preliminaries}
\label{Sec:Preliminaries}
In this section we introduce some notation and recall some general
results about Lagrangian mappings. The key results, which will be
invoked in \S\ref{Sec:ALE_scheme} and \S\ref{Sec:stability_analysis},
are lemmas~\ref{Lem:dtdetJ}, \ref{Lem:mass_transformation}, and
\ref{Lem:ALE_hyperbolic_system}.  The reader who is familiar with
these notions is invited to skip this section and to go directly to
\S\ref{Sec:ALE_scheme}.

\subsection{Jacobian of the coordinate transformation}
\label{Sec:Jacobian_coordinate_transformation}
Let $\bPhi:\Real^d \CROSS \Real_+ \longrightarrow \Real^d$ be a uniformly
Lipschitz mapping. We additionally assume that there is $t^*>0$ such
that the mapping $\bPhi_t: \Real^d\ni\bxi\longmapsto \bPhi_t(\bxi):=\bPhi(\bxi,t)\in\Real^d$ is
invertible for all $t\in[0,t^*]$. Let $\bvale:\Real^d\CROSS [0,t^*]\longrightarrow
\Real^d$ be the vector field implicitly defined by 
\begin{equation}
  \bvale(\bPhi(\bxi,t),t) := \partial_t \bPhi(\bxi,t), \quad \forall
  (\bxi,t)\in\Real\CROSS [0,t^*]. \label{def_of_v_abstract}
\end{equation}
Note that this definition makes sense owing to the inversibility
assumption on the mapping $\bPhi_t$; actually
\eqref{def_of_v_abstract} is equivalent to $\bvale(\bx,t) := \partial_t
\bPhi(\bPhi_t^{-1}(\bx),t)$ for any $t\in[0,t^*]$.

\begin{lemma}[Liouville's formula] \label{Lem:dtdetJ}
 Let
$\polJ(\bxi,t)=\GRAD_{\bxi}\bPhi(\bxi,t)$ be the Jacobian matrix of $\bPhi$,
then
\begin{equation}
\partial_t \det (\polJ(\bxi,t)) = (\DIV\bvale)(\bPhi(\bxi,t),t) \det
(\polJ(\bxi,t)).
\label{eq:Lem:dtdetJ}
\end{equation}
\end{lemma}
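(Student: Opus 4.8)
The plan is to reduce the scalar identity \eqref{eq:Lem:dtdetJ} to a matrix evolution equation for $\polJ$, followed by the classical rule for differentiating a determinant. First I would differentiate the relation defining the velocity field. Writing the entries of $\polJ$ as $J_{ij}=\partial_{\xi_j}\Phi_i$ and using that the mixed partials in $t$ and $\bxi$ commute, \eqref{def_of_v_abstract} gives
\[
\partial_t J_{ij}=\partial_{\xi_j}\partial_t\Phi_i=\partial_{\xi_j}\big(\bvale(\bPhi(\bxi,t),t)\big)_i .
\]
Applying the chain rule to the composition $\bvale\circ\bPhi$ and recognizing $\partial_{\xi_j}\Phi_k=J_{kj}$ yields $\partial_t J_{ij}=\sum_{k}\big(\partial_{x_k}(\bvale)_i\big)(\bPhi(\bxi,t),t)\,J_{kj}$, that is, in matrix form,
\[
\partial_t\polJ=\mathbf L\,\polJ,\qquad \mathbf L(\bxi,t):=(\GRAD_{\bx}\bvale)(\bPhi(\bxi,t),t),
\]
where $\mathbf L$ is the spatial velocity-gradient matrix evaluated along the trajectory.

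Second, I would differentiate $\det\polJ$ by Jacobi's formula. Since $\bPhi_t$ is invertible on $[0,t^*]$, we have $\det\polJ\neq0$ and $\polJ^{-1}$ exists, so
\[
\partial_t\det\polJ=\det(\polJ)\,\operatorname{tr}\!\big(\polJ^{-1}\partial_t\polJ\big)
=\det(\polJ)\,\operatorname{tr}\!\big(\polJ^{-1}\mathbf L\,\polJ\big)=\det(\polJ)\,\operatorname{tr}(\mathbf L),
\]
the last step using the cyclic invariance of the trace. Because $\operatorname{tr}(\mathbf L)=\sum_i\big(\partial_{x_i}(\bvale)_i\big)(\bPhi(\bxi,t),t)=(\DIV\bvale)(\bPhi(\bxi,t),t)$, this is exactly \eqref{eq:Lem:dtdetJ}. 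One can also bypass $\polJ^{-1}$ by differentiating the determinant column-by-column via multilinearity: each term replaces one column of $\polJ$ by the corresponding column of $\mathbf L\polJ$, and summing reproduces $\operatorname{tr}(\mathbf L)\det\polJ$.

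The main obstacle is regularity rather than algebra. The map $\bPhi$ is only assumed uniformly Lipschitz, so its first derivatives exist merely almost everywhere, and both the interchange $\partial_t\partial_{\xi_j}=\partial_{\xi_j}\partial_t$ and the pointwise differentiation of $\det\polJ$ need justification. I would handle this by mollification: regularize $\bPhi$ in $\bxi$ (and, if needed, in $t$) to obtain smooth maps $\bPhi^{\varepsilon}$ for which every step above is classical, and then pass to the limit $\varepsilon\to0$, the convergence of $\polJ^{\varepsilon}$, $\det\polJ^{\varepsilon}$ and $\mathbf L^{\varepsilon}$ being controlled by the uniform Lipschitz bound together with the invertibility assumption that keeps $\det\polJ$ bounded away from $0$. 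If, as is implicitly the case when $\bvale$ itself is Lipschitz and $\bPhi$ is the associated flow, $\bPhi$ is $C^1$ in $(\bxi,t)$, then no regularization is required and all the identities above hold pointwise.
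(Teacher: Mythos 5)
Your proof is correct, but it takes a different route from the paper's. The paper expands $\det\polJ$ via the Leibniz permutation formula, differentiates each product term in $t$, substitutes $\partial_t\Phi_i = v_i(\bPhi(\bxi,t),t)$, applies the chain rule, and then uses the fact that the alternating sums $\sum_{\sigma}\mathrm{sgn}(\sigma)\partial_{\sigma(1)}\Phi_k\partial_{\sigma(2)}\Phi_2\cdots\partial_{\sigma(d)}\Phi_d$ vanish unless $k$ equals the column index being replaced; this is exactly the ``column-by-column multilinearity'' variant you mention in passing. Your primary route instead isolates the matrix evolution law $\partial_t\polJ=\mathbf L\,\polJ$ and invokes Jacobi's formula $\partial_t\det\polJ=\det(\polJ)\,\operatorname{tr}(\polJ^{-1}\partial_t\polJ)$ together with cyclic invariance of the trace. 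The two arguments buy slightly different things: Jacobi's formula makes the structure transparent and is shorter, but it leans on the invertibility of $\polJ$ (available here by hypothesis), whereas the paper's Leibniz expansion is purely algebraic and never needs $\polJ^{-1}$. Your closing remarks on regularity are a genuine value-add: the paper differentiates under the permutation sum without comment even though $\bPhi$ is only assumed uniformly Lipschitz, and your mollification argument (or the observation that in the paper's actual application $\bPhi_t$ is piecewise polynomial in $\bxi$ and affine in $t$, see \eqref{def_of_Phit} and \eqref{motion_of_ai}, so everything is classical) is the honest way to close that gap.
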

\begin{proof}
  This result is wellknown but we give the proof for completeness.
  Let $S_d$ be the set of all permutations of the set $\{1,\ldots,
  d\}$ and $\text{sgn}(\sigma)$ denotes the signature of $\sigma\in
  S_d$.  The Leibniz formula for the determinant gives
  $\det (\polJ(\bxi,t)) = \sum_{\sigma\in S_n}
  \text{sgn}(\sigma) \partial_{\sigma(1)}
  \Phi_1\ldots \partial_{\sigma(d)} \Phi_d$. Then using the definition
  of the field $\bvale$, with Cartesian coordinates $v_1,\ldots,v_d$,
  we have
\begin{multline*}
\partial_t \det(\polJ(\bxi,t)) 
= \sum_{\sigma\in S_n}\text{sgn}(\sigma) 
\partial_{\sigma(1)}\partial_t\Phi_1\ldots \partial_{\sigma(d)}\Phi_d
+\ldots+ \partial_{\sigma(1)}\Phi_1\ldots \partial_{\sigma(d)}\partial_t\Phi_d \\
= \sum_{\sigma\in S_n}\text{sgn}(\sigma) 
\partial_{\sigma(1)} (v_1(\bPhi(\bxi,t),t))\ldots \partial_{\sigma(d)}\Phi_d
+ \ldots+\partial_{\sigma(1)}\Phi_1\ldots \partial_{\sigma(d)}(v_d(\bPhi(\bxi,t),t))\\
 = \sum_{k=1}^d (\partial_k v_1)(\bPhi(\bxi,t),t) \sum_{\sigma\in S_n}\text{sgn}(\sigma) 
\partial_{\sigma(1)}\Phi_k \partial_{\sigma(2)}\Phi_2\ldots \partial_{\sigma(d)}\Phi_d
  +\\
 \ldots + \sum_{k=1}^d (\partial_k v_d)(\bPhi(\bxi,t),t) \sum_{\sigma\in S_n}\text{sgn}(\sigma) 
\partial_{\sigma(1)}\Phi_1 \ldots\partial_{\sigma(d-1)}\Phi_{d-1} \partial_{\sigma(d)}\Phi_k.
\end{multline*}
Upon observing that $\sum_{\sigma\in S_n}\text{sgn}(\sigma) 
\partial_{\sigma(1)}\Phi_k \partial_{\sigma(2)}\Phi_2\ldots \partial_{\sigma(d)}\Phi_d$
is zero unless $k=1$, \etc, we infer that
\[
\partial_t \det (\polJ(\bxi,t)) = (\partial_1 v_1)(\bPhi(\bxi,t),t)\det(\polJ(\bxi,t)) 
+\ldots (\partial_d v_d)(\bPhi(\bxi,t),t) \det(\polJ(\bxi,t)),
\]
which proves the result.
\end{proof}

\begin{remark}
Note that in  \eqref{eq:Lem:dtdetJ} 
the expression $(\DIV\bvale)(\bPhi(\bxi,t),t)$ should not be confused with 
$\DIV(\bvale(\bPhi(\bxi,t),t))$.
\end{remark}

\subsection{Mass transformation} \label{Sec:mass_transformation}
Let $\psi \in C_0^0(\Real^d;\Real)$ be a continuous compactly supported
function. We define $\varphi(\bx,t) := \psi(\bPhi_t^{-1}(\bx))$ for all
$t\in[0,t^*]$, \ie $\varphi(\bPhi(\bxi,t),t) = \psi(\bxi)$ for all
$(\bxi,t)\in\Real^d\CROSS[0,t^*]$. We want to compute $\int_{\Real^d}
\varphi(\bx,t)\diff \bx$ and relate it to $\int_{\Real^d}
\psi(\bxi)\diff \bxi$. 

\begin{lemma}\label{Lem:mass_transformation}
  Assume that $t\longmapsto \det(\polJ(\bxi,t))$ is a polynomial function in $t$ of
  degree at most $r\in\polN$ for any $\bxi\in \Real^d$.  Let
  $(\omega_l,\zeta_l)_{l\in\calL}$ be a quadrature such that $\int_0^1
  f(\zeta)\diff\zeta \simeq \sum_{l\in\calL} \omega_l f(\zeta_l)$ is
  exact for all polynomials of degree at most $\max(r-1,0)$, then,
  using the notation $\varphi(\bx,t) = \psi(\bPhi_t^{-1}(\bx))$, we
  have
\begin{equation}
\int_{\Real^d} \varphi(\bx,t)\diff \bx - \int_{\Real^d}
\psi(\bxi)\diff \bxi  = t \sum_{l\in\calL} \omega_l \int_{\Real^d} \varphi(\bx,t\zeta_l)
\DIV\bvale(\bx,t\zeta_l) \diff \bx \label{GCL_identity}
\end{equation}
\end{lemma}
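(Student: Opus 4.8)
The plan is to reduce the whole statement to a one-dimensional polynomial identity in the time variable that is then integrated exactly by the quadrature. First I would introduce the total mass carried by $\varphi$,
\[
m(t) := \int_{\Real^d} \varphi(\bx,t)\diff\bx,
\]
and push it back to the reference configuration via the change of variables $\bx=\bPhi(\bxi,t)$. Since $\varphi(\bPhi(\bxi,t),t)=\psi(\bxi)$ and $\diff\bx=|\det(\polJ(\bxi,t))|\diff\bxi$, this gives $m(t)=\int_{\Real^d}\psi(\bxi)\,|\det(\polJ(\bxi,t))|\diff\bxi$. Because $\bPhi_t$ is invertible on $[0,t^*]$, the (polynomial, hence continuous) function $t\mapsto\det(\polJ(\bxi,t))$ never vanishes and equals $1$ at $t=0$, so it stays positive and the absolute value may be dropped. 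By the hypothesis that this function has degree at most $r$, $m$ is itself a polynomial in $t$ of degree at most $r$, and $m(0)=\int_{\Real^d}\psi(\bxi)\diff\bxi$.

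Next I would differentiate $m$ in time and invoke Liouville's formula. Differentiating under the integral sign (legitimate since $\psi$ is compactly supported and $t\mapsto\det(\polJ(\bxi,t))$ is polynomial) gives
\[
m'(t) = \int_{\Real^d}\psi(\bxi)\,\partial_t\det(\polJ(\bxi,t))\diff\bxi.
\]
By Lemma \ref{Lem:dtdetJ} the integrand equals $\psi(\bxi)\,(\DIV\bvale)(\bPhi(\bxi,t),t)\,\det(\polJ(\bxi,t))$, and changing variables back to $\bx=\bPhi(\bxi,t)$ with $\psi(\bxi)=\varphi(\bx,t)$ turns this into
\[
m'(t) = \int_{\Real^d}\varphi(\bx,t)\,\DIV\bvale(\bx,t)\diff\bx =: F(t),
\]
which is exactly the integrand on the right-hand side of \eqref{GCL_identity}. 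The fundamental theorem of calculus then yields the exact identity $m(t)-m(0)=\int_0^t F(s)\diff s$, that is, $\int_{\Real^d}\varphi(\bx,t)\diff\bx - \int_{\Real^d}\psi(\bxi)\diff\bxi = \int_0^t F(s)\diff s$.

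It remains to replace the time integral by the quadrature. Rescaling $s=t\zeta$ gives $\int_0^t F(s)\diff s = t\int_0^1 F(t\zeta)\diff\zeta$. The decisive observation is that $F=m'$, and since $m$ has degree at most $r$ in $t$, $F$ has degree at most $\max(r-1,0)$; consequently $\zeta\mapsto F(t\zeta)$ is a polynomial of degree at most $\max(r-1,0)$, precisely the degree for which the quadrature $(\omega_l,\zeta_l)_{l\in\calL}$ is exact. Hence $t\int_0^1 F(t\zeta)\diff\zeta = t\sum_{l\in\calL}\omega_l F(t\zeta_l)$, which is \eqref{GCL_identity}. The routine points are justifying differentiation under the integral sign and the two changes of variables, all controlled by the compact support of $\psi$ and the Lipschitz and invertibility assumptions on $\bPhi$; the real heart of the argument is the matching of degrees, namely recognizing that the mass $m$ is polynomial of degree at most $r$, so that the geometric conservation law integrand $F=m'$ is integrated exactly by a rule of precision $\max(r-1,0)$. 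This is precisely the obstacle that the degree hypothesis on the quadrature is designed to remove, and once the polynomial structure of $m$ is established the conclusion is immediate.
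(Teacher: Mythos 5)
Your proof is correct and follows essentially the same route as the paper: change of variables to the reference configuration, Liouville's formula (Lemma~\ref{Lem:dtdetJ}), the fundamental theorem of calculus in time, and exactness of the quadrature for polynomials of degree at most $\max(r-1,0)$. The only cosmetic difference is that you integrate over space first and apply the quadrature to the scalar polynomial $m'(t)$, whereas the paper applies the quadrature pointwise in $\bxi$ to $\partial_\zeta\det(\polJ(\bxi,\zeta))$ before integrating; both rest on the same degree count.
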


\begin{proof}
Using the definitions  we infer that
\begin{align*}
\int_{\Real^d} \varphi(\bx,t)\diff \bx - \int_{\Real^d}
\psi(\bxi)\diff \bxi  &= \int_{\Real^d} \varphi(\bPhi_t(\bxi),t)
\det(\polJ(\bxi,t)) \diff \bxi -
\int_{\Real^d}\psi(\bxi)\diff \bxi\\
& = \int_{\Real^d} \psi(\bxi)
\left[\int_0^t \partial_\zeta \det(\polJ(\bxi,\zeta))  \diff \zeta\right]\diff \bxi.
\end{align*}
Since by assumption $\det(\polJ(\bxi,\zeta))$ is a polynomial of
degree at most $r$ in $\zeta$, and since the quadrature
$(\omega_l,\zeta_l)_{l\in\calL}$ is exact for all polynomials of
degree at most $\max(r-1,0)$, we infer that
\begin{align*}
\int_{\Real^d} \varphi(\bx,t)\diff \bx - \int_{\Real^d}
\psi(\bxi)\diff \bxi  &= t\sum_{l\in\calL} \omega_l \int_{\Real^d} \psi(\bxi)
\partial_\zeta \det(\polJ(\bxi,t\zeta_l)) \diff \bxi\\
&= t \sum_{l\in\calL} \omega_l \int_{\Real^d} \psi(\bxi)
(\DIV\bvale)(\bPhi(\bxi,t\zeta_l),t\zeta_l) \det(\polJ(\bxi,t\zeta_l)) \diff \bxi \\
&= t \sum_{l\in\calL} \omega_l \int_{\Real^d} \varphi(\bx,t)
\DIV\bvale(\bx,t\zeta_l) \diff \bx,
\end{align*}
where we used Lemma~\ref{Lem:dtdetJ}.
\end{proof}

\begin{remark}
  The statement of Lemma~\ref{Lem:mass_transformation} is somewhat
  similar in spirit to Eq.~(26) in
  \cite{Farhat_Geuzaine_Grandmont_2001} or Eq.~(7) in
  \cite{Guillard_Farhat_2000}. The identity~\eqref{GCL_identity} will
  allow us to prove a statement
  that is known in the ALE literature as the Discrete Geometric
  Conservation Law (DGCL).
\end{remark}

\subsection{Arbitrary Lagrangian Eulerian formulation}
\label{Sec:ALE} Let $\bPhi:\Real^d \CROSS
\Real_+ \longrightarrow \Real^d$ be a uniformly Lipschitz mapping as
defined in \S\ref{Sec:Jacobian_coordinate_transformation} and let
$[0,t^*]$ be the interval where the mapping $\Real^d\ni\bxi\longmapsto
\bPhi(\bxi,t)\in\Real^d$ is invertible for all $t\in[0,t^*]$. Let
$\bvale$ be the vector field defined in \eqref{def_of_v_abstract}, \ie
$\bvale(\bx,t) = \partial_t \bPhi(\bPhi_t^{-1}(\bx),t)$, and let $\bu$ be
a weak solution to \eqref{def:hyperbolic_system}. The following result
is the main motivation for the arbitrary Lagrangian Eulerian
formulation that we are going to use in the paper.
\begin{lemma} \label{Lem:ALE_hyperbolic_system} The following identity
  holds in the distribution sense (in time) over the interval $[0,t^*]$ for
  every function $\psi\in C^0_0(\Real^d;\Real)$ (with the notation
  $\varphi(\bx,t):=\psi(\bPhi_t^{-1}(\bx))$):
\begin{equation}
\partial_t \int_{\Real^d} \bu(\bx,t) \varphi(\bx,t) \diff \bx 
= \int_{\Real^d} \DIV(\bu\otimes\bvale -\bef(\bu)) \varphi(\bx,t)\diff \bx.
\label{pde_ale_weak_formulation}
\end{equation}
\end{lemma}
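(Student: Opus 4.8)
The plan is to pull the moving-domain integral back to the fixed reference configuration through the change of variables $\bx=\bPhi(\bxi,t)$, after which the domain of integration no longer depends on $t$ and the time derivative can be taken under the integral sign. Since $\varphi(\bPhi(\bxi,t),t)=\psi(\bxi)$ by definition, the substitution $\diff\bx=\det(\polJ(\bxi,t))\diff\bxi$ gives
\[
\int_{\Real^d}\bu(\bx,t)\varphi(\bx,t)\diff\bx=\int_{\Real^d}\bu(\bPhi(\bxi,t),t)\,\psi(\bxi)\,\det(\polJ(\bxi,t))\diff\bxi,
\]
and because $\psi$ is $t$-independent and compactly supported, I would differentiate the right-hand side under the integral sign.

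Differentiating the integrand produces two contributions. The first, coming from $\partial_t\det(\polJ(\bxi,t))$, is handled directly by Liouville's formula (Lemma~\ref{Lem:dtdetJ}), which yields $\partial_t\det(\polJ(\bxi,t))=(\DIV\bvale)(\bPhi(\bxi,t),t)\det(\polJ(\bxi,t))$. The second comes from $\partial_t\big[\bu(\bPhi(\bxi,t),t)\big]$; by the chain rule together with $\partial_t\bPhi(\bxi,t)=\bvale(\bPhi(\bxi,t),t)$ this is the material derivative $(\partial_t\bu+\bvale\SCAL\GRAD\bu)(\bPhi(\bxi,t),t)$. I would then invoke the conservation law $\partial_t\bu=-\DIV\bef(\bu)$ to eliminate the time derivative, so that the bracketed integrand becomes $\big(-\DIV\bef(\bu)+\bvale\SCAL\GRAD\bu+\bu\,\DIV\bvale\big)(\bPhi(\bxi,t),t)\,\psi(\bxi)\det(\polJ(\bxi,t))$.

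Finally I would reverse the change of variables, using $\psi(\bxi)=\varphi(\bx,t)$ and $\det(\polJ)\diff\bxi=\diff\bx$, and recognize that the advective and compressible terms recombine into a single divergence, $\bvale\SCAL\GRAD\bu+\bu\,\DIV\bvale=\DIV(\bu\otimes\bvale)$, which follows at once from the definition of $\DIV(\bu\otimes\cdot)$ recalled in the notation of \S\ref{Sec:Riemann}. This gives $\int_{\Real^d}\big(\DIV(\bu\otimes\bvale)-\DIV\bef(\bu)\big)\varphi\diff\bx$, which is exactly the claimed right-hand side of \eqref{pde_ale_weak_formulation}.

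The \emph{main obstacle} is rigor rather than algebra: $\bu$ is only a weak solution, so $\GRAD\bu$ and the pointwise chain rule used above are not classically justified, and the identity is asserted only in the distribution sense in time. I would resolve this either by establishing the identity first for smooth $\bu$ and passing to the limit, or, more cleanly, by the dual computation that tests the weak form of $\partial_t\bu+\DIV\bef(\bu)=0$ against $\varphi(\bx,t)\chi(t)$ for $\chi\in C_c^\infty((0,t^*))$, using that $\varphi$ is transported by the flow, namely $\partial_t\varphi+\bvale\SCAL\GRAD\varphi=0$, obtained by differentiating $\varphi(\bPhi(\bxi,t),t)=\psi(\bxi)$ in $t$. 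The delicate point in this second route is the limited regularity: $\psi\in C_0^0$ forces $\varphi$ to be merely continuous, so one must regularize $\psi$ (or exploit the Lipschitz regularity of $\bPhi$) before integrating by parts in space, and reconciling this spatial regularity with the weak-solution framework is where the care is required.
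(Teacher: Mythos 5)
Your argument is exactly the paper's proof: pull back to the reference configuration via $\bx=\bPhi_t(\bxi)$, differentiate under the integral, apply Liouville's formula (Lemma~\ref{Lem:dtdetJ}) to $\partial_t\det(\polJ)$ and the chain rule to $\partial_t[\bu(\bPhi_t(\bxi),t)]$, substitute the conservation law, recombine $\bvale\SCAL\GRAD\bu+\bu\,\DIV\bvale=\DIV(\bu\otimes\bvale)$, and change variables back. Your closing remark on the regularity gap for weak solutions is a fair observation about a point the paper's own proof also treats formally, but it does not change the fact that the route is the same.
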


\begin{proof}
Using the chain rule and Lemma~\ref{Lem:dtdetJ}, we have
\begin{multline*}
\partial_t \int_{\Real^d} \bu(\bx,t) \varphi(\bx,t) \diff \bx 
= \partial_t \int_{\Real^d} \bu(\bPhi_t(\bxi),t) \det(\polJ(\bxi,t)) \psi(\bxi) \diff \bxi \\
= \int_{\Real^d} \big\{\partial_t (\bu(\bPhi_t(\bxi),t))\det(\polJ(\bxi,t))
+  \bu(\bPhi_t(\bxi),t)\partial_t(\det(\polJ(\bxi,t))) \big\}\psi(\bxi) \diff \bxi \\
= \int_{\Real^d} \big\{(\partial_t\bu)(\bPhi_t(\bxi),t)
+ \partial_t\bPhi(\bxi,t)\SCAL(\GRAD \bu)(\bPhi_t(\bxi),t) \big\}\det(\polJ(\bxi,t))\psi(\bxi)\diff \bxi  \\
+ \int_{\Real^d} \bu(\bPhi_t(\bxi),t) (\DIV\bvale)(\bPhi_t(\bxi),t) \det
(\polJ(\bxi,t))\psi(\bxi) \diff \bxi.
\end{multline*}
Then using \eqref{def:hyperbolic_system} and the definition
of the vector field $\bvale$ yields
\begin{multline*}
\partial_t \int_{\Real^d} \bu(\bx,t) \varphi(\bx,t) \diff \bx 
= \int_{\Real^d}
-\DIV\bef(\bu)(\bPhi_t(\bxi),t)\psi(\bxi)\det(\polJ(\bxi,t))\diff \bxi
\\ 
+  \int_{\Real^d}\!\!\big\{\bvale(\bPhi_t(\bxi),t)\SCAL(\GRAD\bu)(\bPhi_t(\bxi),t)
+ (\DIV\bvale)(\bPhi_t(\bxi),t)\bu(\bPhi_t(\bxi),t)\big\}\psi(\bxi)\det(\polJ(\bxi,t))\diff \bxi  \\
= \int_{\Real^d} \big\{\!-\DIV\bef(\bu)(\bPhi_t(\bxi),t) +\DIV(\bu\otimes\bvale)(\bPhi_t(\bxi),t)\big\}
\psi(\bxi)\det(\polJ(\bxi,t))\diff \bxi.
\end{multline*}
We conclude by making the change of variable $\bx= \bPhi(\bxi,t)$.
\end{proof}

We now state a result regarding the notion of entropy solution in the
ALE framework. The proof of this result is similar to that of
Lemma~\ref{Lem:ALE_hyperbolic_system} and is therefore omitted for brevity.
\begin{lemma} \label{Lem:ALE_hyperbolic_system_entropy} Let
  $(\eta,\bq)$ be an entropy pair for \eqref{def:hyperbolic_system}.
  The following inequality holds in the distribution sense (in time)
  over the interval $[0,t^*]$ for every non-negative function $\psi\in
  C^0_0(\Real^d;\Real_+)$ (with the notation
  $\varphi(\bx,t):=\psi(\bPhi_t^{-1}(\bx))$):
\begin{equation}
\partial_t \int_{\Real^d} \eta(\bu(\bx,t)) \varphi(\bx,t) \diff \bx 
\le \int_{\Real^d} \DIV(\eta(\bu)\bvale -\bq(\bu)) \varphi(\bx,t)\diff \bx.
\label{pde_ale_weak_formulation_entropy}
\end{equation}
\end{lemma}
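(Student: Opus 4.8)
The plan is to mirror verbatim the proof of Lemma~\ref{Lem:ALE_hyperbolic_system}, replacing the vector-valued quantity $\bu$ by the scalar entropy $\eta(\bu)$ and replacing every equality by an inequality precisely at the point where the conservation equation \eqref{def:hyperbolic_system} is traded for the entropy inequality $\partial_t\eta(\bu)+\DIV\bq(\bu)\le 0$ satisfied by the entropy solution $\bu$. First I would pull the integral back to the reference configuration through the change of variable $\bx=\bPhi_t(\bxi)$, so that $\varphi(\bPhi_t(\bxi),t)=\psi(\bxi)$ becomes independent of $t$ and
\[
\int_{\Real^d}\eta(\bu(\bx,t))\varphi(\bx,t)\diff\bx = \int_{\Real^d}\eta(\bu(\bPhi_t(\bxi),t))\det(\polJ(\bxi,t))\psi(\bxi)\diff\bxi .
\]

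Next I would differentiate in $t$ under the integral, applying the product rule to $\eta(\bu(\bPhi_t(\bxi),t))\det(\polJ(\bxi,t))$. The derivative of the determinant is supplied by Lemma~\ref{Lem:dtdetJ}, producing the factor $(\DIV\bvale)(\bPhi_t(\bxi),t)\det(\polJ(\bxi,t))$, while the chain rule together with the definition \eqref{def_of_v_abstract} of $\bvale$ turns $\partial_t\eta(\bu(\bPhi_t(\bxi),t))$ into the material derivative $(\partial_t\eta(\bu))(\bPhi_t(\bxi),t)+\bvale\SCAL(\GRAD\eta(\bu))(\bPhi_t(\bxi),t)$. The crucial step is then to invoke $(\partial_t\eta(\bu))\le-\DIV\bq(\bu)$ in place of the identity $\partial_t\bu=-\DIV\bef(\bu)$. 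Because $\psi\ge0$ and because $\bPhi_t$ is an orientation-preserving diffeomorphism, so that $\det(\polJ(\bxi,t))>0$ on $[0,t^*]$, multiplying the pointwise entropy inequality by the nonnegative weight $\det(\polJ)\psi$ preserves the direction of the inequality. Combining the surviving advective term with the Liouville term through the scalar divergence product rule $\bvale\SCAL\GRAD\eta(\bu)+(\DIV\bvale)\eta(\bu)=\DIV(\eta(\bu)\bvale)$, I would obtain, in the reference frame,
\[
\partial_t\int_{\Real^d}\eta(\bu)\varphi\diff\bx \le \int_{\Real^d}\DIV(\eta(\bu)\bvale-\bq(\bu))(\bPhi_t(\bxi),t)\,\det(\polJ(\bxi,t))\psi(\bxi)\diff\bxi ,
\]
and \eqref{pde_ale_weak_formulation_entropy} follows after changing variables back to $\bx=\bPhi_t(\bxi)$.

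The main obstacle is not algebraic but a matter of rigor: since $\bu$ is only a weak solution, the entropy inequality holds only in the distribution sense, so the formal pointwise chain-rule manipulation above must be interpreted distributionally in time. The clean way to make this precise is to test the spatial statement against an arbitrary nonnegative $\theta\in C_0^\infty((0,t^*))$ and to use $\chi(\bx,t):=\varphi(\bx,t)\theta(t)$ as a test function in the distributional entropy inequality $\iint\big(\eta(\bu)\partial_t\chi+\bq(\bu)\SCAL\GRAD\chi\big)\diff\bx\diff t\ge0$; here the positivity of $\det(\polJ)$ and of $\psi$ is exactly what guarantees $\chi\ge0$, hence the legitimacy of applying the inequality. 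Apart from this bookkeeping, every manipulation coincides with the one carried out in the proof of Lemma~\ref{Lem:ALE_hyperbolic_system}, which is why it may safely be omitted.
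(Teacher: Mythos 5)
Your proposal is correct and follows exactly the route the paper intends: the paper omits this proof, stating only that it is similar to that of Lemma~\ref{Lem:ALE_hyperbolic_system}, and your argument reproduces that computation with $\bu$ replaced by $\eta(\bu)$, the PDE replaced by the distributional entropy inequality, and the sign preserved thanks to $\psi\ge 0$ and $\det(\polJ)>0$. The remark about making the chain-rule step rigorous by testing against a nonnegative $\theta\in C_0^\infty((0,t^*))$ is a sensible addition rather than a deviation.
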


\section{The Arbitrary Lagrangian Eulerian
  algorithm} \label{Sec:ALE_scheme} We describe in this section the
ALE algorithm to approximate the solution of
\eqref{def:hyperbolic_system}. We use continuous finite elements and
explicit time stepping. We are going to use two different discrete
settings: one for the mesh motion and one for the approximation of
\eqref{def:hyperbolic_system}.

\subsection{Geometric finite elements and mesh} \label{Sec:Geometric_FE_and_Mesh}
Let $(\calT_h^0)_{h>0}$ be a shape-regular sequence of matching
meshes.  The symbol ${}^0$ in $\calT_h^0$ refers to the initial
configuration of the meshes.  The meshes will deform over time, in a way
that has yet to be defined, and we are going to use the symbol ${}^n$
to say that $\calT_h^n$ is the mesh at time $t^n$ for a given $h>0$.  We assume that the
elements in the mesh cells are generated from a finite number of
reference elements denoted $\wK_1,\dots,\wK_\varpi$.  For instance,
$\calT_h^0$ could be composed of a combination of triangles and
parallelograms in two space dimensions ($\varpi=2$ in this case); the
mesh $\calT_h^0$ could also be composed of a combination of
tetrahedra, parallelepipeds, and triangular prisms in three space
dimensions ($\varpi=3$ in this case). The diffeomorphism mapping
$\wK_r$ to an arbitrary element $K\in \calT_h^n$ is denoted $T_K^n :
\wK_r \longrightarrow K$ and its Jacobian matrix is denoted
$\polJ_K^n$, $1\le r\le \varpi$. We now introduce a set of reference
Lagrange finite elements
$\{(\wK_r,\wP_r\upgeo,\wSigma_r\upgeo)\}_{1\le r\le \varpi}$ (the
index $r\in\intset {1}{\varpi}$ will be omitted in the rest of the
paper to alleviate the notation).  Letting $\nfgeo:=\dim\wP\upgeo$, we
denote by $\{\wba_i\}_{i\in\intset{1}{\nfgeo}}$ and
$\{\wtheta_i\upgeo\}_{i\in\intset{1}{\nfgeo}}$ the Lagrange nodes of
$\wK$ and the associated Lagrange shape functions.

 The sole purpose of the
geometric reference element $\{(\wK,\wP\upgeo,\wSigma\upgeo)$ is to
construct the geometric transformation $T_K^n$ as we now explain. Let
$\{\ba_i^n\}_{i\in\intset{1}{\Nglobgeo}}$ be the collection of all the
Lagrange nodes in the mesh $\calT_h^n$, which we organize in cells by
means of the geometric connectivity array $\jjgeo :
\calT_h^n\CROSS\intset{1}{\nfgeo} \longrightarrow
\intset{1}{\Nglobgeo}$ (assumed to be independent of the time index
$n$). Given a mesh cell $K\in \calT_h^n$, the connectivity array is
defined such that $\{\ba_{\jjgeo(i,K)}^n\}_{i\in\intset{1}{\nfgeo}}$
is the set of the Lagrange nodes describing $K^n$. More precisely, 
upon defining the geometric transformation $T_K^n : \wK\longrightarrow K$  at time $t^n$ by
\begin{equation}
T_K^n(\wbx) = \sum_{i\in\intset{1}{\nfgeo}} \ba_{\jjgeo(i,K)}^n
\wtheta_i\upgeo(\wbx)
\label{def_geometric_transformation}
\end{equation}
we have $K:=T_K^n(\wK)$.  In other words the geometric transformation
is {\em fully described by the motion of geometric Lagrange nodes.} We
finally recall that constructing the Jacobian matrix $\polJ_K^n$ from
\eqref{def_geometric_transformation} is an elementary operation for
any finite element code.

\subsection{Approximating finite elements}
We now introduce a set of reference finite elements
$\{(\wK_r,\wP_r,\wSigma_r)\}_{1\le r\le \varpi}$ which we are going to use to construct an
approximate solution to \eqref{def:hyperbolic_system} (the index
$r\in\intset {1}{\varpi}$ will be omitted in the rest of the paper to
alleviate the notation).
The shape functions on the reference element 
are denoted
$\{\wtheta_i\}_{i\in\intset{1}{\nf}}$.
We assume that the basis $\{\wtheta_i\}_{i\in\intset{1}{\nf}}$ has the
following key properties:
\begin{align}
\wtheta_i(\bx) \ge 0, \quad \sum_{i\in\intset{1}{\nf}}\wtheta_i(\wbx) =1,\
\quad \forall \wbx \in\wK.  \label{reference_postivity_unity}
\end{align}
These properties hold for linear Lagrange elements.  It holds true
also for Bernstein-Bezier finite elements, see \eg
\cite[Chap.~2]{Lai_Schumaker_2007}, \cite{Ainsworth_2014}. 

Given the mesh $\calT_h^n$, we denote by $\Dom^n$ the computational
domain generated by $\calT_h^n$ and define the scalar-valued space
\begin{align} \label{eq:Xh}
P(\calT_h^n) &:=\{ v\in \calC^0(\Dom^n;\Real)\st 
v_{|K}{\circ}T_K^n \in \wP,\ \forall K\in \calT_h^n\},
\end{align} 
where $\wP$ is the reference polynomial space. 
We also introduce the vector-valued spaces
\begin{align} \label{eq:Xh_vector}
\bP_d(\calT_h^n) :=[P(\calT_h^n)]^d, 
\quad \text{and} \quad
\bP_m(\calT_h^n) :=[P(\calT_h^n)]^m.
\end{align}
We are going to approximate the ALE velocity in $\bP_d(\calT_h^n)$ and
the solution of \eqref{def:hyperbolic_system} in $\bP_m(\calT_h^n)$.
The global shape functions in $P(\calT_h^n)$ are denoted by
$\{\psi_i^n\}_{i\in\intset{1}{\Nglob}}$.  Recall that these functions
form a basis of $P(\calT_h^n)$. Let $\jj :
\calT_h^n\CROSS\intset{1}{\nf} \longrightarrow \intset{1}{\Nglob}$ be
the connectivity array, which we assume to be independent of $n$. This
array is defined such that
\begin{equation}
\psi_{\jj(i,K)}^n(\bx) = \wtheta_i((T_K^n)^{-1}(\bx)), \quad \forall
i\in\intset{1}{\nf},\ \forall K\in \calT_h^n.
\label{def_local_shape_functions}
\end{equation}
This definition together with \eqref{reference_postivity_unity}
implies that
\begin{align}
\psi_i^n(\bx) \ge 0, \quad \sum_{i\in\intset{1}{\Nglob}}\psi_i^n(\bx) =1,\
\quad \forall \bx \in\Real^d.  \label{postivity_unity}
\end{align}

We denote by $S_i^n$ the support of $\psi_i^n$ and by $\mes{S_i^n}$
the measure of $S_i$, $i\in\intset{1}{\Nglob}$. We also define
$S_{ij}^n:=S_i^n\cap S_j^n$ the intersection of the two supports
$S_i^n$ and $S_j^n$. Let $E$ be a union of cells in $\calT_h^n$; we
define $\calI(E):=\{j\in\intset{1}{\Nglob} \st \mes{S_j^n \cap
  E}\not=0\}$ the set that contains the indices of all the shape
functions whose support on $E$ is of nonzero measure.  Note that the
index set $\calI(E)$ does not depend on the time index $n$ since we
have assumed that the connectivity of the degrees of freedom is fixed
once for all.  We are going to regularly invoke $\calI(K)$ and
$\calI(S_i^n)$ and the partition of unity property:
$\sum_{i\in\calI(K)} \psi_i^n(\bx) =1$ for all $\bx\in K$.

\begin{lemma} \label{Lem:convexity} For all $K\in \calT_h^n$, all $\bx\in K$, and all
  $\bv_h:=\sum_{i\in\intset{1}{\Nglob}} \bsfV_i \psi_i^n \in \bP_m(\calT_h^n)$, $\bv_h(\bx)$ is
  in the convex hull of $(\bsfV_i)_{i\in \calI(K)}$ (henceforth
  denoted $\mathrm{conv}(\bsfV_i)_{i\in \calI(K)}$). Moreover for any
  convex set $A$ in $\Real^m$, we have
\begin{equation}
  \left((\bsfV_i)_{i\in \calI(K)} \in A \right) 
\Rightarrow 
\left(\bv_h(\bx) \in A, \ \forall \bx \in K\right).
  \label{mesh_convexity_assumption} 
\end{equation}
\end{lemma}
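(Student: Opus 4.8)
The plan is to show that, on a fixed cell, $\bv_h(\bx)$ is literally a convex combination of the coefficients $(\bsfV_i)_{i\in\calI(K)}$, with weights supplied by the shape functions evaluated at $\bx$; both assertions of the lemma then reduce to the definition of convexity.

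First I would fix $K\in\calT_h^n$ and $\bx\in K$ and collapse the global sum defining $\bv_h$ to a sum over $\calI(K)$ only. By \eqref{def_local_shape_functions} the only global shape functions that do not vanish on $K$ are those with index in $\calI(K)$: for $j\notin\calI(K)$ the support $S_j^n$ meets $K$ in a set of zero measure, so the open set $\{\psi_j^n\neq 0\}$ has empty intersection with the interior of $K$, and continuity of $\psi_j^n$ forces it to vanish identically on $K$. Consequently $\bv_h(\bx)=\sum_{i\in\calI(K)} \bsfV_i\,\psi_i^n(\bx)$.

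Next I would invoke the two structural properties of the basis. By \eqref{postivity_unity} we have $\psi_i^n(\bx)\ge 0$ for every $i$, and the partition of unity on $K$ gives $\sum_{i\in\calI(K)}\psi_i^n(\bx)=1$. Hence the scalars $(\psi_i^n(\bx))_{i\in\calI(K)}$ are nonnegative and sum to one, so $\bv_h(\bx)$ is by definition a convex combination of the family $(\bsfV_i)_{i\in\calI(K)}$, which is exactly the assertion $\bv_h(\bx)\in\mathrm{conv}(\bsfV_i)_{i\in\calI(K)}$ and proves the first claim. The implication \eqref{mesh_convexity_assumption} is then immediate: if $A$ is convex and $\bsfV_i\in A$ for all $i\in\calI(K)$, then every convex combination of these points lies in $A$, so $\bv_h(\bx)\in A$; since $\bx\in K$ was arbitrary, this holds for all $\bx\in K$.

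There is no genuinely hard step here, since the lemma simply records that a finite element function assembled from a nonnegative partition of unity takes values in the convex hull of its nodal coefficients. The one point that deserves a line of care is the localization carried out in the first step, namely justifying that the shape functions indexed outside $\calI(K)$ drop out even at points $\bx$ on $\partial K$; this is handled by the continuity of the $\psi_i^n$ together with the zero-measure overlap condition defining $\calI(K)$.
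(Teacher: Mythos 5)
Your proof is correct and follows essentially the same route as the paper: restrict the sum to $\calI(K)$, use the nonnegativity and partition-of-unity properties \eqref{postivity_unity} to see that $\bv_h(\bx)$ is a convex combination of $(\bsfV_i)_{i\in\calI(K)}$, and then conclude by convexity of $A$. The extra care you take in justifying that shape functions with indices outside $\calI(K)$ vanish on $K$ is a detail the paper leaves implicit, but it does not change the argument.
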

\begin{proof} The positivity and partition of unity
  assumption~\eqref{postivity_unity} and the definition $\bv_h(\bx) =
  \sum_{i\in\calI(K)} \bsfV_i \psi_i^n(\bx)$ implies that
  $\bv_h(\bx)$ is a convex combination of
  $(\bsfV_i)_{i\in\calI(K)}$, whence the conclusion.  The
  statement \eqref{mesh_convexity_assumption} follows readily since
  the convexity assumption on $A$ implies that
  $\mathrm{conv}(\bsfV_i)_{i\in \calI(K)} \subset A$.
\end{proof}

Let $\calM^n\in \Real^{\Nglob\CROSS \Nglob}$ be the
consistent mass matrix with entries
$\int_{S_{ij}^n} \psi_i^n(\bx)\psi_j^n(\bx)\diff \bx$, and let $\calM^{L,n}$
be the diagonal lumped mass matrix with entries
\begin{equation}
  m_i^n:= \int_{S_i^n} \psi_i^n(\bx)\diff \bx. \label{def_of_mi}
\end{equation}
The partition of unity property implies that $m^n_i = \sum_{j\in
  \calI(S_i^n)} \int \psi_j^n(\bx)\psi_i^n(\bx)\diff \bx$, \ie the
entries of $\calM^{L,n}$ are obtained by summing the rows of $\calM^n$.
Note that the positivity assumption~\eqref{postivity_unity} implies
that $m_i^n>0$ for any $i\in\intset{1}{\Nglob}$.

\subsection{The ALE algorithm, version 1} \label{Sec:alg1} Let
$\calT_h^0$ be the mesh at the initial time $t=0$.  Let
$(\fm_i^{0})_{i\in\intset{1}{\Nglob}}$ be the approximations of
  the mass of the shape functions at time $t^0$ defined by $\fm_i^0 =
m_i^0:=\int_{\Real^d} \psi_i^0(\bx) \diff \bx$.  Let $\bu_{h0}:=
\sum_{i\in\intset{1}{\Nglob}}\bsfU_{i}^0\psi_i^0 \in \bP_m(\calT_h^0)$
be a reasonable approximation of the initial data $\bu_0$ (we shall
make a more precise statement later).

Let $\calT_h^n$ be the mesh at time $t^n$, $(\fm_i^n)_{1\le i\le
  \Nglob}$ be the approximations of the mass of the shape functions at
time $t^n$, and $\bu_h^n:= \sum_{i\in\intset{1}{\Nglob}}
\bsfU_i^n\psi_i^n \in\bP_m(\calT_h^n)$ be the approximation of $\bu$
at time $t^n$.  We denote by $\fM^{L,n}$ the approximate lumped
matrix, \ie $\fM^{L,n}_{ij} = \fm_i^n \delta_{ij}$. We now make the
  assumption that the given ALE velocity field is a member of
  $\bP_d(\calT_h^n)$, \ie $\bw^n = \sum_{i\in\intset{1}{\Nglob}}
\bsfW_i^n \psi_i^n\in\bP_d(\calT_h^n)$. Then the Lagrange nodes of the
mesh are moved by using
\begin{equation}
\ba_i^{n+1} = \ba_i^n + \dt \bw^n(\ba_i^n). \label{alg1_motion_of_ai}
\end{equation}
This fully defines the mesh $\calT_h^{n+1}$ as explained at the end of
\S\ref{Sec:Geometric_FE_and_Mesh}.  We now estimate the mass of the
shape function $\psi_i^{n+1}$.  Of course we could use
$m_{i}^{n+1}=\int_{\Real^d} \psi_i^{n+1}(\bx) \diff \bx$, this option
will be explored in \S\ref{sec:alternative}, but to make the method
easier to extend with higher-order strong stability preserving (SSP)
time stepping techniques, we define $\fm_{i}^{n+1}$ by approximating
\eqref{GCL_identity} with a first-order quadrature rule,
\begin{equation}
\fm_i^{n+1} = \fm_i^n +\dt \int_{S_i^n} \psi_i^n(\bx) \DIV \bw^n(\bx) \diff \bx.
\label{alg1_def_of_fmi}
\end{equation}
Taking inspiration from
\eqref{pde_ale_weak_formulation}, we propose to compute $\bu_h^{n+1}$
by using the following explicit technique:
 \begin{multline}
\frac{\fm_i^{n+1} \bsfU_i^{n+1}-\fm_i^{n} \bsfU_i^n}{\dt}  
- \sum_{j\in \calI(S_i^n)} d_{ij}^{n} \bsfU^n_j \\
+
\int_{\Real^d} \DIV\bigg(\sum_{j\in\intset{1}{\Nglob}}
(\bef(\bsfU_j^n)  - \bsfU_j^n\otimes \bsfW_j^{n})\psi_j^n(\bx)\bigg) 
\psi_i^n(\bx) \diff \bx  
 = 0,  \label{def_of_scheme_dij}
\end{multline}
where $\bu_h^{n+1}:= \sum_{i\in\intset{1}{\Nglob}}
\bsfU_i^{n+1}\psi_i^{n+1} \in\bP_m(\calT_h^{n+1})$. Notice that we have
replaced the consistent mass matrix by an approximation of the lumped
mass matrix to approximate the time derivative.  The coefficient
$d_{ij}^{n}$ is an artificial viscosity for the pair of degrees of
freedom $(i,j)$ that will be identified by proceeding as in
\cite{Guermond_Popov_Hyp_2015}. We henceforth assume that
$d_{ij}^{n}=0$ if $j\not\in\calI(S_i^n)$ and
\begin{equation}
  d_{ij}^{n}\ge 0, \ \text{if}\   i\not= j,\quad
  d_{ij}^{n}=d_{ji}^{n},\quad  \text{and} \quad  d_{ii}:=\sum_{i\ne j\in\calI(S_i^n)}  -d_{ji}^{n}.
\label{introduction_dij}
\end{equation} 
The entire process is described in Algorithm~\ref{alg1}.

Let us reformulate \eqref{def_of_scheme_dij} in a form that is more
suitable for computations. Let us introduce the vector-valued coefficients
\begin{equation}
\bc_{ij}^n := \int_{S_i^n} \GRAD\psi_j^n(\bx)
\psi_i^n(\bx)\diff \bx.
\label{def_of_cij}
\end{equation}
We define the unit vector $\bn_{ij}^{n} := \frac{\bc_{ij}^{n}}{\|\bc_{ij}^{n}\|_{\ell^2}} $.
Then we can rewrite \eqref{def_of_scheme_dij} as follows
\begin{align}
\frac{\fm_i^{n+1} \bsfU_i^{n+1}-\fm_i^{n} \bsfU_i^n}{\dt} 
+ \sum_{j\in \calI(S_i^n)} (\bef(\bsfU_j^n) -\bsfU_j^n\otimes\bsfW_j^{n})\SCAL \bc_{ij}^{n}
 -  d_{ij}^{n} \bsfU^n_j  = 0.  \label{def_of_scheme_cij_dij} 
\end{align}
It will be shown in the proof of Theorem~\ref{Thm:invariant_domain}
that an admissible choice for $d_{ij}^{n}$ is
\begin{equation}
d_{ij}^{n} = 
\max(\lambda_{\max}(\bg_{j}^n,\bn_{ij}^n,\bsfU_i^n,\bsfU_j^n) \|\bc_{ij}^{n}\|_{\ell^2},
\lambda_{\max}(\bg_{i}^n,\bn_{ji}^n,\bsfU_j^n,\bsfU_i^n) \|\bc_{ji}^{n}\|_{\ell^2}).
 \label{def_of_dij}
\end{equation}
where $\lambda_{\max}(\bg_j^n,\bn_{ij}^n,\bsfU_i^n,\bsfU_j^n)$ 
is the largest wave speed in the following one-dimensional Riemann
problem:
\begin{equation}
\partial_t \bv + \partial_x (\bg_j^n(\bv)\SCAL\bn_{ij}^n) =0,\quad (x,t)\in
\Real\CROSS\Real_+, 
\quad \bv(x,0)=
\begin{cases} 
\bsfU_i^{n} & \text{if $x<0$}\\
\bsfU_j^{n} & \text{if $x>0$}.
\end{cases} \label{one_D_Riemann_problem}
\end{equation}
where we have defined the flux $\bg_j^n(\bv) := \bef(\bv) - \bv\otimes\bsfW_j^{n}$.
\begin{remark}[Fastest wave speed]
  The fastest wave speed in \eqref{one_D_Riemann_problem} can be
  obtained by estimating the fastest wave speed in the Riemann problem
  \eqref{def:Riemann_problem} with the flux $\bef(\bv)\SCAL\bn_{ij}^n$
  and initial data $(\bsfU_i^n,\bsfU_j^n)$.  Let
  $\lambda_L(\bef,\bn_{ij}^n,\bsfU_i^n,\bsfU_j^n)$ and
  $\lambda_R(\bef,\bn_{ij}^n,\bsfU_i^n,\bsfU_j^n)$ be the speed of the
  leftmost and rightmost waves in \eqref{def:Riemann_problem},
  respectively.  Then 
\begin{multline}
 \lambda_{\max}(\bg_{j}^n,\bn_{ij}^n,\bsfU_i^n,\bsfU_j^n)
  = \max(|\lambda_L(\bef,\bn_{ij}^n,\bsfU_i^n,\bsfU_j^n) -
  \bsfW_j^{n}\SCAL\bn_{ij}^n|,\\
  |\lambda_R(\bef,\bn_{ij}^n,\bsfU_i^n,\bsfU_j^n) -
  \bsfW_j^{n}\SCAL\bn_{ij}^n|). \label{lambda_max_shifted}
\end{multline} 
A very fast algorithm to compute $\lambda_L(\bef,\bn_{ij}^n,\bsfU_i^n,\bsfU_j^n)$ and
  $\lambda_R(\bef,\bn_{ij}^n,\bsfU_i^n,\bsfU_j^n)$ for the compressible Euler equations
is described in \cite{Guermond_Popov_Fast_Riemann_2016}; see also \cite{Toro_2009}.
\end{remark}

\begin{algorithm}
  \caption{} \label{alg1}
  \begin{algorithmic}[1]
    \REQUIRE $\bu_h^{0}$ and $\fM^{L,0}$ \WHILE{$t^n<T$} \STATE Use
    CFL condition to estimate $\tau$.  \IF{$t^n+\tau> T$} \STATE
    $\tau\gets T-t^n$
    \ENDIF
    \STATE Estimate/choose $\bw^{n}$ and make sure that the
    transformation $\bPhi_t$ defined in \eqref{def_of_Phit} is
    invertible over the interval $[t^n,t^{n+1}]$.  
    \STATE Move mesh from $t^n$ to $t^{n+1}$ using
    \eqref{alg1_motion_of_ai}.
    \STATE Compute $\fm_i^{n+1}$, see \eqref{alg1_def_of_fmi}. Check
     $\fm_i^{n+1}>0$; otherwise,  go to step 6, reduce $\dt$.
    \STATE Compute $\bc_{ij}^n$ as in \eqref{def_of_cij}.
    \STATE Compute $d_{ij}^{n}$, see \eqref{def_of_dij} and
    \eqref{introduction_dij}.  
    \STATE Check $1-\sum_{i\ne j\in
      \calI(S_i^n)} 2d_{ij}^{n}\frac{\dt}{\fm_i^{n+1}}$ positive.
    Otherwise, go to step 6 and reduce $\dt$.  
    \STATE Compute $\bu_h^{n+1}$ by using
    \eqref{def_of_scheme_cij_dij}.
    \STATE $t^n \gets t^n+\tau$
    \ENDWHILE
  \end{algorithmic}
\end{algorithm}

Since it will be important to compare $\bsfU_j^{n+1}$ and $\bsfU_j^n$
to establish the invariant domain property, we rewrite the scheme in a
form that is more suitable for this purpose.
\begin{lemma}[Non-conservative form] \label{Lem:one_mi}
The scheme \eqref{def_of_scheme_dij} is equivalent to
\begin{align}
  \fm_i^{n+1} \frac{\bsfU_i^{n+1}-\bsfU_i^n}{\dt} &= \sum_{j\in
    \calI(S_i^n)}
  ((\bsfU_j^n-\bsfU_i^n)\otimes\bsfW_j^{n}-\bef(\bsfU_j^n))\SCAL
  \bc_{ij}^{n} + d_{ij}^{n} \bsfU^n_j,\label{Eq:Lem:one_mi}
\end{align}
\end{lemma}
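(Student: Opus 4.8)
The plan is to pass from the conservative form \eqref{def_of_scheme_cij_dij} to the non-conservative form \eqref{Eq:Lem:one_mi} by isolating the contribution coming from the change of mass $\fm_i^{n+1}-\fm_i^n$. The only genuinely new ingredient is the mass update \eqref{alg1_def_of_fmi}; everything else is algebra, so I do not expect a serious obstacle.

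First I would split the discrete time derivative appearing on the left of \eqref{def_of_scheme_cij_dij} by means of the elementary identity
\[
\frac{\fm_i^{n+1}\bsfU_i^{n+1}-\fm_i^n\bsfU_i^n}{\dt}
 = \fm_i^{n+1}\frac{\bsfU_i^{n+1}-\bsfU_i^n}{\dt}
 + \frac{\fm_i^{n+1}-\fm_i^n}{\dt}\,\bsfU_i^n.
\]
The first term on the right is precisely the left-hand side of the desired identity \eqref{Eq:Lem:one_mi}, so the whole computation reduces to showing that the second term is exactly the extra $-(\bsfU_i^n\otimes\bsfW_j^{n})\SCAL\bc_{ij}^n$ piece needed to turn $\bsfU_j^n\otimes\bsfW_j^n$ into $(\bsfU_j^n-\bsfU_i^n)\otimes\bsfW_j^n$.

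Next I would evaluate the mass increment. Since $\bw^n=\sum_{j}\bsfW_j^{n}\psi_j^n$ one has $\DIV\bw^n=\sum_j\bsfW_j^{n}\SCAL\GRAD\psi_j^n$, so the definition \eqref{alg1_def_of_fmi} together with the definition \eqref{def_of_cij} of $\bc_{ij}^n$ gives
\[
\frac{\fm_i^{n+1}-\fm_i^n}{\dt}
 = \int_{S_i^n}\psi_i^n(\bx)\,\DIV\bw^n(\bx)\diff\bx
 = \sum_{j\in\calI(S_i^n)}\bsfW_j^{n}\SCAL\bc_{ij}^n,
\]
where I used that $\bc_{ij}^n=0$ whenever $j\notin\calI(S_i^n)$. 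The one point that deserves care is the tensor bookkeeping: since $\bsfW_j^{n}\SCAL\bc_{ij}^n$ is a scalar while $\bsfU_i^n\in\Real^m$, I rewrite the increment term as $\bsfU_i^n(\bsfW_j^{n}\SCAL\bc_{ij}^n)=(\bsfU_i^n\otimes\bsfW_j^{n})\SCAL\bc_{ij}^n$, which matches the contraction convention already used for $\bsfU_j^n\otimes\bsfW_j^n$ in \eqref{def_of_scheme_cij_dij}.

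Finally I would substitute these two computations back into \eqref{def_of_scheme_cij_dij}, move the flux and viscosity sums to the right-hand side, and collect the two matrix contributions $-(\bsfU_i^n\otimes\bsfW_j^{n})\SCAL\bc_{ij}^n$ and $(\bsfU_j^n\otimes\bsfW_j^{n})\SCAL\bc_{ij}^n$ into $((\bsfU_j^n-\bsfU_i^n)\otimes\bsfW_j^{n})\SCAL\bc_{ij}^n$, leaving the term $-\bef(\bsfU_j^n)\SCAL\bc_{ij}^n$ and the viscosity term $d_{ij}^n\bsfU_j^n$ unchanged. This reproduces \eqref{Eq:Lem:one_mi} term by term. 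The argument is purely algebraic, and the only steps worth double-checking are the sign conventions when the flux sum is transposed across the equality and the correct identification of the scalar $\bsfW_j^{n}\SCAL\bc_{ij}^n$ with the matrix contraction $(\bsfU_i^n\otimes\bsfW_j^{n})\SCAL\bc_{ij}^n$.
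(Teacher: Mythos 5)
Your proposal is correct and follows essentially the same route as the paper: split the discrete time derivative into $\fm_i^{n+1}(\bsfU_i^{n+1}-\bsfU_i^n)/\dt + (\fm_i^{n+1}-\fm_i^n)\bsfU_i^n/\dt$, use \eqref{alg1_def_of_fmi} together with $\DIV\bw^n=\sum_j\bsfW_j^{n}\SCAL\GRAD\psi_j^n$ and the definition \eqref{def_of_cij} to get $\fm_i^{n+1}-\fm_i^n=\dt\sum_{j\in\calI(S_i^n)}\bsfW_j^n\SCAL\bc_{ij}^n$, and absorb the resulting $(\bsfU_i^n\otimes\bsfW_j^n)\SCAL\bc_{ij}^n$ term into the flux sum. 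No gaps.
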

\begin{proof}
We rewrite
\eqref{def_of_scheme_cij_dij}
as follows:
\begin{align*}
  \fm_i^{n+1} \frac{\bsfU_i^{n+1}-\bsfU_i^n}{\dt} + \frac{\fm_i^{n+1}
    -\fm_i^{n}}{\dt} \bsfU_i^n = \sum_{j\in \calI(S_i^n)}
  (\bsfU_j^n\otimes\bsfW_j^{n}-\bef(\bsfU_j^n))\SCAL \bc_{ij}^{n} +
  d_{ij}^{n} \bsfU^n_j,
\end{align*}
Then, recalling the expression $\bw^n= \sum_{i\in\intset{1}{\Nglob}}
\bsfW_i^{n} \psi_i^n$, and using \eqref{alg1_def_of_fmi}, we infer
that $\fm_i^{n+1} = \fm_i^{n} + \dt \sum_{j\in \calI(S_i^n)} \bsfW_j^n \SCAL \bc_{ij}^n$,
which in turn implies that
\begin{align*}
 (\fm_i^{n+1} - \fm_i^{n})\bsfU_i^n
& = \dt \bsfU_i^n \sum_{j\in\calI(S_i^n)} \bsfW_j^{n}\SCAL\bc_{ij}^{n}
= \dt  \sum_{j\in\calI(S_i^n)} (\bsfU_i^n\otimes\bsfW_j^{n})\SCAL\bc_{ij}^{n},
\end{align*}
whence the result.
\end{proof}

\begin{remark}[Other discretizations]
  Note that the method for computing the artificial diffusion is quite
  generic, \ie it is not specific to continuous finite elements.  The
  above method can be applied to any type of discretization that can
  be put into the form~\eqref{def_of_scheme_cij_dij}.
\end{remark}

\subsection{Continuous mesh motion} We introduce in this section some
technicalities regarding the mesh motion that will be used in the
second version of the algorithm and which will be described in
\S\ref{sec:alternative}. Our main motivation is to replace the
approximate mass conservation \eqref{alg1_def_of_fmi} by the exact
quadrature~\eqref{GCL_identity}. For this purpose, we need to consider
the continuous motion of the mesh over the time interval
$[t^n,t^{n+1}]$.

Given a mesh $\calT_h^n$ we denote by $\Dom^n$ the computational
domain generated by $\calT_h^n$. Then using the standard constructions of
continuous finite element spaces, we define a new scalar-valued space
based on the geometric Lagrange finite elements
$\{(\wK_r,\wP_r\upgeo,\wSigma_r\upgeo)\}_{1\le r\le \varpi}$:
\begin{align} \label{eq:Xhgeo}
P\upgeo(\calT_h^n) &=\{ v\in \calC^0(\Dom^n;\Real)\st 
v_{|K}{\circ}T_K^n \in \wP\upgeo,\ \forall K\in \calT_h^n\},
\end{align} 
where $\wP\upgeo$ is the reference polynomial space defined on $\wK$ (note
that the index $r$ has been omitted). 
We also introduce the vector-valued spaces
\begin{align} \label{eq:Xhgeo_vector}
\bP\upgeo(\calT_h^n) :=[P\upgeo(\calT_h^n)]^d.
\end{align} 
We denote by
$\{\psi_i\upgeon\}_{i\in\intset{1}{\Nglob}}$ the global shape functions
in $P\upgeo(\calT_h^n)$.  Recall that $\{\psi_i\upgeon\}_{i\in\intset{1}{\Nglob}}$
is a basis of $P\upgeo(\calT_h^n)$ and
\begin{equation}
\psi_{\jjgeo(i,K)}\upgeon(\bx) = \wtheta_i\upgeo((T_K^n)^{-1}(\bx)), \quad \forall
i\in\intset{1}{\nf},\ \forall K\in \calT_h^n, \ \forall \bx\in K.
\label{def_local_geo_shape_functions}
\end{equation}

The key difference with version~1 of the algorithm is that now
  we are going to construct exactly the mapping $\bPhi_t:\calT_h^n
  \longrightarrow \calT_h(t)$ by using $\bP\upgeo(\calT_h^n)$, and we
  are going to assume that the given ALE velocity is a member of
  $\bP\upgeo(\calT_h^n)$ instead of $\bP_d(\calT_h^n)$ as we did in
  \S\ref{Sec:alg1}.
Let $\bw^{n} = \sum_{i\in\calI(K)} \bsfW_i\upgeon \psi_i\upgeon\in
\bP\upgeo(\calT_h^n)$ be the ALE velocity.
 
Let us construct the associated transformation $\bPhi_t:
\Real^d\longrightarrow \Real^d$ and velocity field $\bvale$ for any
$t\in [t^n,t^{n+1}]$.  We define a continuous deformation of the mesh
over the time interval $[t^n,t^{n+1}]$ by moving the nodes
$\{\ba^n_i\}_{i\in\intset{1}{\Nglobgeo}}$ as follows:
\begin{equation}
\ba_i(t) = \ba_i^n + t \bsfW_i\upgeon ,\qquad t\in [t^n,t^{n+1}]. \label{motion_of_ai}
\end{equation}
This rule completely defines the mesh $\calT_h(t)$ owing to the
definition of the geometric transformation $T_K(t) :\wK
\longrightarrow K$, with $T_K(t)(\wbx) = \sum_{i\in\intset{1}{\nfgeo}}
\ba_{\jjgeo(i,K)}(t) \wtheta_i\upgeo(\wbx)$, for all $K\in
\calT_h(t)$, see \eqref{def_geometric_transformation}.  The shape
functions of $P\upgeo(\calT_h(t))$ and $P(\calT_h(t))$ are defined as
usual by setting 
\begin{align}
\varphi_{\jj(i,K)}\upgeo(\bx,t)&:=
\wtheta_i\upgeo((T_K(t))^{-1}(\bx)),\quad \forall\bx\in K,\ \forall K \in\calT_h(t),
\ \forall i\in\intset{1}{\Nglobgeo}\label{def_local_shape_functions_geo_t}\\
 \varphi_{\jj(i,K)}(\bx,t)&:=
\wtheta_i((T_K(t))^{-1}(\bx)),\quad \forall\bx\in K,\ \forall K\in\calT_h(t),
\ \forall i\in\intset{1}{\Nglob}.\label{def_local_shape_functions_t}
\end{align}  
We recall that
$\calT_h^{n+1}:=\calT_h(t^{n+1})$. Notice that
$\varphi_i\upgeo(\cdot,t^n) = \psi_i\upgeon$ and
$\varphi_i\upgeo(\cdot,t^{n+1}) = \psi_i\upgeonpone$ for any
$i\in\intset{1}{\Nglobgeo}$, and $\varphi_i(\cdot,t^n) = \psi_i^n$ and
$\varphi_i(\cdot,t^{n+1}) = \psi_i^{n+1}$ for any
$i\in\intset{1}{\Nglob}$.

For any $t\in [t^n,t^{n+1}]$ 
we now define the mapping 
$\bPhi_t : \calT_h^n \longrightarrow \calT_h(t)$ by
\begin{equation}
  \bPhi_t(\bxi)_{|K} = \sum_{i\in\intset{1}{\nfgeo}} 
  \ba_{\jjgeo(i,K)}(t)\psi_{\jjgeo(i,K)}\upgeon(\bxi),
  \qquad \forall \bxi\in K,\ \forall \in \calT_h^n. \label{def_of_Phit}
\end{equation}

 \begin{lemma}\label{Lem:TKt_varphit}
   The following properties hold for any $K\in \calT_h^n$,
   any $t\in [t^n,t^{n+1}]$ and any $i\in\intset{1}{\Nglob}$:
\begin{align}
&T_K(t) = \bPhi_t\circ T_K^n,\\
&\varphi_i(\bPhi_t(\bxi),t) = \psi_i^n(\bxi), \quad \varphi_i\upgeo(\bPhi_t(\bxi),t) = \psi_i\upgeon(\bxi),
\quad \forall \bxi\in K\in \calT_h^n. \label{Eq:Lem:varphit}\\
&\bvale(\bx,t) = \sum_{i\in\intset{1}{\Nglob}}  \bsfW_i\upgeon  \varphi_i\upgeo(\bx,t),
\qquad  \forall t\in [t^n,t^{n+1}],\ \forall\bx \in \Real^d. \label{Eq:Lem:TKt_varphit_bvale}
\end{align}
\end{lemma}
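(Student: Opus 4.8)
The plan is to prove the three identities in order, because parts two and three both reduce to the first once it is available. Before anything else I would note that the map $\bPhi_t$ defined cell-wise in \eqref{def_of_Phit} is in fact globally well defined and continuous: the $\psi_i\upgeon$ are the continuous global geometric shape functions of $P\upgeo(\calT_h^n)$ and the node trajectories $\ba_i(t)$ are single-valued, so the cell-wise formulas agree on shared interfaces and assemble into a continuous map $\bxi\mapsto\sum_{i\in\intset{1}{\Nglobgeo}}\ba_i(t)\psi_i\upgeon(\bxi)$.

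For the first identity $T_K(t)=\bPhi_t\circ T_K^n$, I would argue by direct substitution. Fix $K\in\calT_h^n$ and $\wbx\in\wK$, and set $\bxi:=T_K^n(\wbx)$. Evaluating \eqref{def_of_Phit} at $\bxi$ and rewriting each global geometric shape function through \eqref{def_local_geo_shape_functions} as $\psi_{\jjgeo(i,K)}\upgeon(\bxi)=\wtheta_i\upgeo((T_K^n)^{-1}(\bxi))$, the argument $(T_K^n)^{-1}(\bxi)$ equals $\wbx$, so the sum collapses to $\sum_{i\in\intset{1}{\nfgeo}}\ba_{\jjgeo(i,K)}(t)\wtheta_i\upgeo(\wbx)$, which is exactly $T_K(t)(\wbx)$ by the definition of the geometric transformation at time $t$. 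This settles part one.

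Part two I would deduce from part one by composition. Inverting $T_K(t)=\bPhi_t\circ T_K^n$ (legitimate since $\bPhi_t$ is invertible on $[t^n,t^{n+1}]$, a property enforced by the CFL restriction) gives $(T_K(t))^{-1}=(T_K^n)^{-1}\circ\bPhi_t^{-1}$ on $K$. Then for $\bxi\in K$ the definition \eqref{def_local_shape_functions_t} yields $\varphi_{\jj(i,K)}(\bPhi_t(\bxi),t)=\wtheta_i((T_K(t))^{-1}(\bPhi_t(\bxi)))=\wtheta_i((T_K^n)^{-1}(\bxi))=\psi_{\jj(i,K)}^n(\bxi)$, where the last step uses \eqref{def_local_shape_functions}. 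The identical computation with $\wtheta_i\upgeo$, \eqref{def_local_shape_functions_geo_t} and \eqref{def_local_geo_shape_functions} produces the geometric version $\varphi_i\upgeo(\bPhi_t(\bxi),t)=\psi_i\upgeon(\bxi)$.

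Finally, for part three I would differentiate $\bPhi_t$ in time and invoke part two. Since $\partial_t\ba_i(t)=\bsfW_i\upgeon$ by \eqref{motion_of_ai}, differentiating \eqref{def_of_Phit} gives $\partial_t\bPhi_t(\bxi)=\sum_{i\in\intset{1}{\nfgeo}}\bsfW_{\jjgeo(i,K)}\upgeon\psi_{\jjgeo(i,K)}\upgeon(\bxi)$, and the definition \eqref{def_of_v_abstract} identifies the left-hand side with $\bvale(\bPhi_t(\bxi),t)$. Replacing $\psi_i\upgeon(\bxi)$ by $\varphi_i\upgeo(\bPhi_t(\bxi),t)$ via the geometric identity of part two and setting $\bx=\bPhi_t(\bxi)$, which sweeps out all of $\Real^d$ as $\bxi$ does by invertibility, delivers $\bvale(\bx,t)=\sum_{i\in\intset{1}{\Nglob}}\bsfW_i\upgeon\varphi_i\upgeo(\bx,t)$. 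The whole statement is bookkeeping; the only points demanding care are the well-definedness of $\bPhi_t$ across interfaces and the explicit use of its invertibility when inverting the geometric maps in part two, so I would flag these rather than treat them as genuine obstacles.
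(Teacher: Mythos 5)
Your proposal is correct and follows essentially the same route as the paper's own proof: part one by substituting $\psi_{\jjgeo(i,K)}\upgeon=\wtheta_i\upgeo\circ(T_K^n)^{-1}$ into \eqref{def_of_Phit}, part two by inverting the composition, and part three by differentiating the node trajectories and invoking \eqref{Eq:Lem:varphit}. The only cosmetic difference is your explicit flagging of interface continuity and of the invertibility of $\bPhi_t$ (which the paper treats as a standing assumption checked in the algorithm rather than a consequence of the CFL condition), neither of which changes the argument.
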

\begin{proof}
  Let us observe first that the definition \eqref{def_of_Phit}
  together with the definitions of $T_K(t)$ and
  $\psi_{\jjgeo(i,K)}\upgeon = \wtheta_i\upgeo\circ (T_K^n)^{-1}$
  implies that
\begin{align*}
 \bPhi_t(\bxi)_{|K} &=  
\sum_{i\in\intset{1}{\nfgeo}}\!\! \ba_{\jjgeo(i,K)}(t)\psi_{\jjgeo(i,K)}\upgeon(\bxi)
=  \sum_{i\in\intset{1}{\nfgeo}}\!\!  \ba_{\jjgeo(i,K)}(t)\wtheta_i\upgeo\circ (T_K^n)^{-1}(\bxi),
\end{align*}
which implies that $\bPhi_{t|K} = T_K(t)\circ (T_K^n)^{-1}$.  This
proves the first statement. Second, the definition of the shape
functions \eqref{def_local_shape_functions_t} together with the above
result implies that
\begin{align*}
\varphi_{\jj(i,K)}(\bx,t)&= \wtheta_i((T_K(t))^{-1}(\bx)) =
\wtheta_i((\bPhi_t\circ T_K^n)^{-1}(\bx)) = 
\wtheta_i((T_K^n)^{-1}\circ(\bPhi_t)^{-1}(\bx)).
\end{align*}
This proves that $\varphi_{\jj(i,K)}(\bPhi_t(\bxi),t) =
\psi_{\jj(i,K)}^n(\bxi)$ for every $\bxi\in K\in\calT_h^n$.  
Proceed similarly to prove $\varphi_{\jj(i,K)}\upgeo(\bPhi_t(\bxi),t) =
\psi_{\jj(i,K)}\upgeon(\bxi)$.
Now let
us compute $\partial_t \bPhi$. Using the definition of the motion of
the nodes \eqref{motion_of_ai} and the definition of $\bPhi$,
\eqref{def_of_Phit}, we infer that
\[
\partial_t \bPhi(\bxi,t)_{|K} = \sum_{i\in\intset{1}{\nfgeo}}
\bsfW_{\jjgeo(i,K)}\upgeon \psi_{\jjgeo(i,K)}\upgeon(\bxi)= \bw^n_{|K}.
\]
Hence the definition of $\bvale$ gives
\[
\bvale(\bx,t) = \partial_t \bPhi(\bPhi_t^{-1}(\bx),t) = \bw^{n}(\bPhi_t^{-1}(\bx))
=\sum_{i\in\intset{1}{\Nglobgeo}}  \bsfW_i\upgeon  \psi_i\upgeon(\bPhi_t^{-1}(\bx)).
\]
We then conclude by invoking ~\eqref{Eq:Lem:varphit}, \ie 
$\varphi_i\upgeo(\bx,t) = \psi_i\upgeon(\bPhi_t^{-1}(\bx))$.
\end{proof}

Before writing the complete algorithm we need to make a change of
basis to express the ALE velocity in the approximation basis.  We
further assume that
\begin{equation}
\wP\upgeo \subset \wP. \label{wPgeo_subset_wP}
\end{equation}
This assumption implies that $\bP\upgeo(\calT_h^n)\subset\bP_d(\calT_h^n)$;
hence there is a sparse matrix
$\polB$, independent of $n$, such that $\psi_i\upgeon = \sum_{j\in
  s(i)} \polB_{ij} \psi_j^n$, were $s(i)$ is a sparse set of indices
for any $i\in\intset{1}{\Nglobgeo}$. We then define
\begin{equation}
\bsfW_j^n :=\sum_{\{i\st j\in
  s(i)\}} \polB_{ij} \bsfW_{i}\upgeon, \label{change_of_basis_on_W}
\end{equation}
which, owing to \eqref{Eq:Lem:TKt_varphit_bvale}, gives the following
alternative representation of $\bvale$:
\begin{equation}
\bvale(\bx,t)=\sum_{i\in\intset{1}{\Nglob}}  \bsfW_i^n  \psi_i^n
(\bPhi_t^{-1}(\bx)).
\label{change_of_basis_on_vale}
\end{equation}

\subsection{The ALE algorithm, version 2}
\label{sec:alternative}
It may look odd to some readers that in version~1 of the algorithm we
update the mass of the shape function $\psi_i^{n+1}$ by using
\eqref{alg1_def_of_fmi} instead of using $m_{i}^{n+1} =\int_{\Real^d}
\psi_{i}^{n+1}(\bx) \diff \bx$. We propose in this section an
alternative form of the algorithm that does exactly that. This
algorithm is henceforth referred to as version~2. For reasons that
will be detailed in \S\ref{Sec:SSP_implementation}, we have not been
able so far to construct an SSP extension of this algorithm that is
both conservative and invariant domain preserving, whereas the SSP
extension of version~1 is trivial.

Let $(\omega_l,\zeta_l)_{l\in\calL}$ be a quadrature such that
$\int_0^1 f(\zeta)\diff\zeta \simeq \sum_{l\in\calL} \omega_l
f(\zeta_l)$ is exact for all polynomial function $f$ of degree at most
$d-1$. We denote $t_l^n = t^n + \dt \zeta_l$.  Given the ALE field
$\bw^n\in\bP\upgeo(\calT_h^n)$, the Lagrange nodes of the mesh are
moved for each time $t_l^n$, $l\in\calL$, by using \eqref{motion_of_ai}:
\begin{equation}
\ba_i(t_l^n) =  \ba_i^n +  \dt \zeta_l \bsfW_i\upgeon.
\label{motion_of_ai_tln}
\end{equation} 
This defines the new meshes $\calT_h(t_l^n)$. This allows us to compute 
\begin{equation}
\bc_{ij}(t^n_l) := \int_{S_i^n(t^n_l)} \GRAD\varphi_j(\bx,t^n_l)
\varphi_i(\bx,t^n_l)\diff \bx,\qquad \bc_{ij}^{n} :=\sum_{l\in\calL}
\omega_l \bc_{ij}(t^n_l).
\label{def_of_cij_gauss}
\end{equation}
After constructing $\calT_h(t^{n+1})$ by setting $\ba_i^{n+1} = \ba_i^n
+ \dt \bsfW_i\upgeon$, we define the mass of $\psi_i^{n+1}$ by
\begin{equation}
m_i^{n+1} := \int_{\Real^d} \psi_{i}^{n+1}(\bx) \diff \bx.
\label{def_mass_gauss}
\end{equation} 
Then the change of basis \eqref{change_of_basis_on_W} is applied to
obtain the representation of $\bw^n$ in $\bP_d(\calT_h^n)$.  
Following \eqref{pde_ale_weak_formulation}, we 
compute $\bu_h^{n+1}$ by using the following explicit technique:
 \begin{equation}
\frac{m_i^{n+1} \bsfU_i^{n+1}-m_i^{n} \bsfU_i^n}{\dt}  
+ \sum_{j\in\calI(S_i^n)}
(\bef(\bsfU_j^n)  - \bsfU_j^n\otimes \bsfW_j^{n})\SCAL \bc_{ij}^n- d_{ij}^{n} \bsfU^n_j
 = 0,  \label{def_of_scheme_dij_gauss}
\end{equation}
where $d_{ij}^n$ is computed by using \eqref{def_of_dij}.

\begin{lemma}[Non-conservative form] \label{Lem:one_mi_gauss}
The scheme \eqref{def_of_scheme_dij_gauss} is equivalent to
\begin{align}
  m_i^{n+1} \frac{\bsfU_i^{n+1}-\bsfU_i^n}{\dt} &= \sum_{j\in
    \calI(S_i^n)}
  ((\bsfU_j^n-\bsfU_i^n)\otimes\bsfW_j^{n}-\bef(\bsfU_j^n))\SCAL
  \bc_{ij}^{n} + d_{ij}^{n} \bsfU^n_j.\label{Eq:Lem:one_mi_gauss}
\end{align}
\end{lemma}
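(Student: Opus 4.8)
The plan is to follow the proof of Lemma~\ref{Lem:one_mi} verbatim, the only genuinely new ingredient being the mass balance $m_i^{n+1}=m_i^n+\dt\sum_{j\in\calI(S_i^n)}\bsfW_j^n\SCAL\bc_{ij}^n$. In version~1 this relation was built into the algorithm by the very definition \eqref{alg1_def_of_fmi} of $\fm_i^{n+1}$, whereas here $m_i^{n+1}$ is defined exactly by \eqref{def_mass_gauss}, so the same relation must instead be \emph{derived} from the discrete geometric conservation law supplied by Lemma~\ref{Lem:mass_transformation}. First I would split the discrete time derivative in \eqref{def_of_scheme_dij_gauss},
\begin{equation*}
\frac{m_i^{n+1}\bsfU_i^{n+1}-m_i^n\bsfU_i^n}{\dt}
=m_i^{n+1}\frac{\bsfU_i^{n+1}-\bsfU_i^n}{\dt}+\frac{m_i^{n+1}-m_i^n}{\dt}\bsfU_i^n,
\end{equation*}
so that proving \eqref{Eq:Lem:one_mi_gauss} reduces to identifying the spurious term $\frac{m_i^{n+1}-m_i^n}{\dt}\bsfU_i^n$ with $\sum_{j\in\calI(S_i^n)}(\bsfU_i^n\otimes\bsfW_j^n)\SCAL\bc_{ij}^n$, exactly as in Lemma~\ref{Lem:one_mi}.

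The main step is thus the mass identity, and this is where I expect the only real work to lie. I would apply Lemma~\ref{Lem:mass_transformation} on the local time interval $[0,\dt]$ with $\psi=\psi_i^n$, so that the moving shape function is $\varphi_i$, $\int_{\Real^d}\varphi_i(\bx,t^{n+1})\diff\bx=m_i^{n+1}$ and $\int_{\Real^d}\psi_i^n(\bxi)\diff\bxi=m_i^n$; identity \eqref{GCL_identity} then gives
\begin{equation*}
m_i^{n+1}-m_i^n=\dt\sum_{l\in\calL}\omega_l\int_{\Real^d}\varphi_i(\bx,t_l^n)\,\DIV\bvale(\bx,t_l^n)\diff\bx.
\end{equation*}
Next I would insert the representation $\bvale(\bx,t)=\sum_{j\in\intset{1}{\Nglob}}\bsfW_j^n\varphi_j(\bx,t)$, which follows from \eqref{change_of_basis_on_vale} together with the relation $\varphi_j(\bx,t)=\psi_j^n(\bPhi_t^{-1}(\bx))$ of \eqref{Eq:Lem:varphit}. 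Since the nodal velocities $\bsfW_j^n$ are constant in space, $\DIV\bvale(\bx,t_l^n)=\sum_j\bsfW_j^n\SCAL\GRAD\varphi_j(\bx,t_l^n)$, and because $\varphi_i$ is supported in $S_i^n(t_l^n)$ one recognizes $\int_{\Real^d}\GRAD\varphi_j\,\varphi_i\diff\bx=\bc_{ij}(t_l^n)$; the quadrature $\bc_{ij}^n=\sum_{l\in\calL}\omega_l\bc_{ij}(t_l^n)$ of \eqref{def_of_cij_gauss} then collapses the sum over $l$ to yield
\begin{equation*}
m_i^{n+1}-m_i^n=\dt\sum_{j\in\calI(S_i^n)}\bsfW_j^n\SCAL\bc_{ij}^n.
\end{equation*}
At this point I must check that Lemma~\ref{Lem:mass_transformation} is indeed applicable: because the nodes move affinely in local time by \eqref{motion_of_ai}, the entries of $\polJ$ are affine in $t$, hence $t\mapsto\det\polJ$ is a polynomial of degree at most $d$, and the quadrature $(\omega_l,\zeta_l)_{l\in\calL}$ was chosen precisely to be exact to degree $d-1$. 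This is the single place where exactness of the rule is essential, and it is the crux of the argument.

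Finally, multiplying the mass identity by $\bsfU_i^n$ gives $\frac{m_i^{n+1}-m_i^n}{\dt}\bsfU_i^n=\sum_{j\in\calI(S_i^n)}(\bsfU_i^n\otimes\bsfW_j^n)\SCAL\bc_{ij}^n$. Substituting this into the split form of \eqref{def_of_scheme_dij_gauss} and combining the two $\otimes\bsfW_j^n$ contributions, namely $-\bsfU_i^n\otimes\bsfW_j^n+\bsfU_j^n\otimes\bsfW_j^n=(\bsfU_j^n-\bsfU_i^n)\otimes\bsfW_j^n$, produces exactly \eqref{Eq:Lem:one_mi_gauss}. The obstacle is therefore concentrated entirely in the mass balance: once the exact quadrature reproduces the continuous geometric conservation law, the remaining algebra is word-for-word that of Lemma~\ref{Lem:one_mi}.
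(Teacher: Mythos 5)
Your proof is correct and follows essentially the same route as the paper: split off the term $\frac{m_i^{n+1}-m_i^n}{\dt}\bsfU_i^n$, identify it via Lemma~\ref{Lem:mass_transformation} with $\psi=\psi_i^n$ and the representation $\bvale=\sum_j\bsfW_j^n\varphi_j(\cdot,t)$, and recognize the quadrature-averaged coefficients $\bc_{ij}^n$ of \eqref{def_of_cij_gauss}. Your explicit check that $t\mapsto\det\polJ$ is a polynomial of degree at most $d$ (so that the degree-$(d-1)$ quadrature makes \eqref{GCL_identity} exact) is a point the paper leaves implicit, and it is correctly argued.
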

\begin{proof}
We rewrite
\eqref{def_of_scheme_dij_gauss}
as follows:
\begin{align*}
m_i^{n+1} \frac{\bsfU_i^{n+1}-\bsfU_i^n}{\dt} + \frac{m_i^{n+1} -m_i^{n}}{\dt} \bsfU_i^n
= \sum_{j\in \calI(S_i^n)}  (\bsfU_j^n\otimes\bsfW_j^{n}-\bef(\bsfU_j^n))\SCAL \bc_{ij}^{n}
 + d_{ij}^{n} \bsfU^n_j.
\end{align*}
Then, recalling the expression \eqref{change_of_basis_on_vale} of
$\bvale(\bx,t)=\sum_{i\in\intset{1}{\Nglob}} \bsfW_i^{n}
\varphi_i(\bx,t)$, and using
Lemma~\ref{Lem:mass_transformation} with $\psi = \psi_i^n$ and
$\varphi(\bx,t) = \varphi_i(\bx,t)$ we infer that
\begin{align*}
 (m_i^{n+1} -m_i^{n})\bsfU_i^n
&= \dt \bsfU_i^n\sum_{l\in\calL} \omega_l \int_{\Real^d} \varphi_i(\bx,t^n_l)
  \sum_{j\in\calI(S_i^n)} \bsfW_j^{n} \SCAL \GRAD\varphi_j(\bx,t^n_l)
  \diff \bx \\
& = \dt \bsfU_i^n \sum_{j\in\calI(S_i^n)} \bsfW_j^{n}\SCAL\bc_{ij}^{n}
= \dt  \sum_{j\in\calI(S_i^n)} (\bsfU_i^n\otimes\bsfW_j^{n})\SCAL\bc_{ij}^{n},
\end{align*}
whence the result.
\end{proof}

\subsection{Version~1 vs. version~2 and SSP extension} 
\label{Sec:SSP_implementation}
We now give an overview of what has been done in the previous sections
by highlighting the main differences between the two versions of the
algorithm.

\begin{itemize}
\item Both versions of the algorithm use the two sets of reference elements:
we use $\{(\wK_r,\wP_r\upgeo,\wSigma_r\upgeo)\}_{1\le r\le \varpi}$ 
for the geometric mappings (see \eqref{def_geometric_transformation}), and we use
$\{(\wK_r,\wP_r,\wSigma_r)\}_{1\le r\le \varpi}$ for the approximation 
of $\bu$.
\item We assume that $\bw^n\in \bP_d(\calT_h^n)$ in version~1, whereas
  we assume that $\bw^n\in \bP\upgeo(\calT_h^n)$ in version~2. We also
  must assume that $\bP\upgeo(\calT_h^n)\subset \bP_d(\calT_h^n)$ (\ie
  $\wP\upgeo\subset \wP$, see \eqref{wPgeo_subset_wP}) in
  version~2, which is not the case for version~1; actually the space
  $\bP\upgeo(\calT_h^n)$ does not play any role in version~1.

\item Only the meshes $\calT_h^n$ and $\calT_h^{n+1}$ are
considered in version~1, whereas one must construct all the
intermediate meshes $\calT_h(t_l^n)$, $l\in\calL$, in version~2.

\item The mass of $\psi_i^{n+1}$ is updated by setting $\fm_i^{n+1} =
\fm_i^n +\dt \int_{S_i^n} \psi_i^n(\bx) \DIV\bw^n(\bx)\diff \bx$ in version~1, whereas
it is updated by setting $m_i^{n+1}=\int_{S_i^{n+1}}\psi_i^{n+1}(\bx) \diff \bx$ in version~2.
\end{itemize}

Retaining the invariant domain property (see \S\ref{Sec:Invariance})
and increasing the time accuracy can be done by using so-called Strong
Stability Preserving (SSP) time discretization methods.  The key is to
achieve higher-order accuracy in time by making convex combination of
solutions of forward Euler sub-steps. More precisely each time step of
a SSP method is decomposed into substeps that are all forward Euler
solutions, and the end of step solution is constructed as a convex
combination of the intermediate solutions; we refer to
\cite{Ferracina_Spijker_2005,Higueras_2005,Gottlieb_Ketcheson_Shu_2009}
for reviews on SPP techniques. Algorithm~\ref{alg:euler} illustrates one Euler step for
either version~1 or version~2 of the scheme. SSP techniques are useful
when combined with reasonable limitation strategies since the
resulting methods are both high-order, in time and space, and
invariant domain preserving.
\begin{algorithm}
	\caption{Euler step (version~1 and version~2)}
	\label{alg:euler}
	\begin{algorithmic}[1]
          \REQUIRE $\calT_h^0$, $\bu_h^{0}$, ($\fm^0$ or $m^0$), $\bw^0$, $\dt$  
          \STATE Compute $\tba_i^1 = \ba_i^0 + \dt \bw^0$, ($\widetilde{\fm}^1$ or $\tm^1$), $\tbu_h^1$, 
          and build new mesh $\tcalT_h^1$
          \RETURN  $\tcalT_h^1$, $\tbu_h^{1}$, ($\widetilde{\fm}^1$ or $\tm^1$)
	\end{algorithmic}
\end{algorithm}

We describe the SSP RK3 implementation of version~1 of the scheme in
Algorithm~\ref{alg:spp_rk3}. Generalizations to other SSP techniques
are left to the reader. 
\begin{algorithm}[H]
	\caption{SPP RK3, version~1}
	\label{alg:spp_rk3}
	\begin{algorithmic}[1]
	\REQUIRE  $\calT_h^0$, $\bu_h^{0}$, $\fm^0$, $t^0$
        \STATE Define the ALE velocity $\bw^0$ at $t^0$
        \STATE Call Euler step($\calT_h^0$, $\bu_h^{0}$, $\fm^0$, $\bw^0$, $\dt$, $\calT_h^1$, $\bu_h^{1}$, 
$\fm^1$)
        \STATE Define the ALE velocity $\bw^1$ at $t^{0}+\dt$
        \STATE Call Euler step($\calT_h^1$, $\bu_h^{1}$, $\fm^1$, $\bw^1$, $\dt$, $\tcalT_h^2$, $\tbu_h^{2}$, 
$\widetilde{\fm}^2$)
        \STATE Set $\ba^2=\frac34 \ba^0 + \frac14 \tba^2$, $\fm^2=\frac34 \fm^0 + \frac14 \widetilde\fm^2 $, build mesh $\calT_h^2$,
        $\bu_h^2=\frac34 \frac{\fm^0}{\fm^2}\bu_h^{0}+\frac14 \frac{\widetilde\fm^{2}}{\fm^2}\tbu_h^{2}$
        \STATE Define the ALE velocity $\bw^2$ at $t^0+\frac12\dt$
        \STATE Call Euler step($\calT_h^2$, $\bu_h^{2}$, $\fm^2$, $\bw^2$, $\dt$, $\tcalT_h^3$, $\tbu_h^{3}$, 
$\widetilde{\fm}^3$)
         \STATE Set $\ba^3=\frac13 \ba^0 + \frac23 \tba^3 $, $\fm^3=\frac13 \fm^0 + \frac23 \widetilde\fm^3$, build mesh $\calT_h^3$,
        $\bu_h^3=\frac13 \frac{\fm^0}{\fm^3}\bu_h^{0}+\frac23 \frac{\widetilde\fm^{3}}{\fm^3}\tbu_h^{3}$
        \RETURN  $\calT_h^3$, $\bu_h^{3}$, $\fm^3$, $t^1=t^0+dt$
	\end{algorithmic}
\end{algorithm}
Note that $\bu_h^2$ is a convex combination of $\bu_h^0$ and $\tbu_h^2$
since $1=\frac34 \frac{\fm_i^0}{\fm_i^2}+\frac14
\frac{\widetilde\fm_i^{2}}{\fm_i^2}$. The same observation holds for
$\bu_h^3$, \ie $\bu_h^3$ is a convex combination of $\bu_h^0$ and
$\tbu_h^3$ since $1=\frac13\frac{\fm_i^0}{\fm_i^3}+\frac23
\frac{\widetilde\fm_i^{3}}{\fm_i^3}$, for any $i\in\intset{1}{\Nglob}$.

\begin{remark}[Version~2+SSP] \label{Rem:SSP_v2}
The above properties do
not hold for version~2 of the scheme, since in general $m_i^2\ne \frac34
m_i^0+\frac14 \tm_i^{2}$ and $m_i^3\ne \frac13 m_i^0+\frac23 \tm_i^{3}$. Notice
though that it can be shown that $m_i^2= \frac34 m_i^0+\frac14 \tm_i^{2}$
and $m_i^3= \frac13 m_i^0+\frac23 \tm_i^{3}$ in one space dimension if the
ALE velocity is kept constant over the entire Runge Kutta step. So
far, we are not aware of any SSP technique for version~2 of the
algorithm (at least second-order in time) that is both conservative
and invariant domain preserving in the multidimensional case.
\end{remark}

\section{Stability analysis}\label{Sec:stability_analysis}
We establish the conservation and the invariant domain property of the
two schemes \eqref{def_of_scheme_dij} and
\eqref{def_of_scheme_dij_gauss} in this section.

\subsection{Conservation}
We first discuss the conservation properties of the two schemes.
\begin{lemma} {\em(i)} For the scheme~\eqref{def_of_scheme_dij_gauss}, the
  quantity $\int_{\Real^d} \bu_h^{n}\diff \bx$ is conserved for all
  $n\ge 0$, \ie $\int_{\Real^d} \bu_h^{n}\diff x=\int_{\Real^d}
  \bu_h^0\diff \bx$ for all $n\ge 0$. {\em(ii)} For the
  scheme~\eqref{def_of_scheme_dij}, the quantity
  $\sum_{i\in\intset{1}{\Nglob}} \fm_i^{n} \bsfU_i^{n}$ is conserved,
  \ie it is independent of $n$.
\end{lemma}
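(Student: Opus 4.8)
The plan is to prove both statements by a single algebraic computation: I would sum the conservative form of each scheme over all degrees of freedom $i\in\intset{1}{\Nglob}$ and show that the flux contribution and the artificial-viscosity contribution cancel separately upon summation. For version~1 the relevant form is \eqref{def_of_scheme_cij_dij} and for version~2 it is \eqref{def_of_scheme_dij_gauss}; the two equations have identical structure, differing only in whether the lumped mass is the approximate mass $\fm_i^n$ or the exact mass $m_i^n$. Consequently the core of the argument is the same for both parts, and the only genuine distinction will appear at the very end.

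First I would treat the flux term. Interchanging the order of summation gives $\sum_i\sum_{j\in\calI(S_i^n)}(\bef(\bsfU_j^n)-\bsfU_j^n\otimes\bsfW_j^n)\SCAL\bc_{ij}^n = \sum_j(\bef(\bsfU_j^n)-\bsfU_j^n\otimes\bsfW_j^n)\SCAL\big(\sum_i\bc_{ij}^n\big)$. Using the partition of unity $\sum_i\psi_i^n=1$ from \eqref{postivity_unity}, the coefficient from \eqref{def_of_cij} sums to $\sum_i\bc_{ij}^n=\int_{\Real^d}\GRAD\psi_j^n(\bx)\diff\bx$, which vanishes because $\psi_j^n$ is compactly supported (or by periodicity). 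For version~2 the same conclusion holds term by term in the quadrature: each $\bc_{ij}(t_l^n)$ in \eqref{def_of_cij_gauss} sums over $i$ to $\int_{\Real^d}\GRAD\varphi_j(\bx,t_l^n)\diff\bx=0$ by the partition of unity at time $t_l^n$, so the weighted sum $\bc_{ij}^n$ also satisfies $\sum_i\bc_{ij}^n=0$.

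Next I would handle the artificial-viscosity term. The diagonal convention $d_{ii}^n=-\sum_{j\ne i}d_{ji}^n$ in \eqref{introduction_dij} says precisely that every column of the matrix $(d_{ij}^n)_{i,j}$ sums to zero, that is $\sum_i d_{ij}^n=0$ for each fixed $j$ (symmetry is not even needed for this). Hence $\sum_i\sum_{j\in\calI(S_i^n)}d_{ij}^n\bsfU_j^n=\sum_j\bsfU_j^n\big(\sum_i d_{ij}^n\big)=0$. Combining the two cancellations, the summed scheme collapses to $\sum_i\fm_i^{n+1}\bsfU_i^{n+1}=\sum_i\fm_i^n\bsfU_i^n$ for version~1, which is exactly statement~(ii), and to $\sum_i m_i^{n+1}\bsfU_i^{n+1}=\sum_i m_i^n\bsfU_i^n$ for version~2.

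Finally, to obtain statement~(i) I would identify the conserved sum for version~2 with the integral: since $m_i^n=\int_{\Real^d}\psi_i^n(\bx)\diff\bx$ by \eqref{def_mass_gauss}, we have $\int_{\Real^d}\bu_h^n\diff\bx=\sum_i\bsfU_i^n\int_{\Real^d}\psi_i^n\diff\bx=\sum_i m_i^n\bsfU_i^n$, so the conserved quantity is genuinely $\int_{\Real^d}\bu_h^n\diff\bx$. I do not expect any serious obstacle; the proof is essentially bookkeeping once the two cancellations are in place. The only point worth emphasizing is the contrast between the two parts: in version~1 the approximate mass $\fm_i^n$ from \eqref{alg1_def_of_fmi} does not in general equal the exact mass $\int_{\Real^d}\psi_i^n\diff\bx$, so $\sum_i\fm_i^n\bsfU_i^n$ need not coincide with $\int_{\Real^d}\bu_h^n\diff\bx$; version~2, by construction, conserves the true integral.
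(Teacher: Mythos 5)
Your argument is correct and follows essentially the same route as the paper: sum the scheme over $i$, use the partition of unity together with compact support/periodicity to kill the flux term (your $\sum_i\bc_{ij}^n=\int_{\Real^d}\GRAD\psi_j^n\diff\bx=0$ is the same cancellation the paper obtains by integrating the divergence against $\sum_i\psi_i^n=1$), and use the diagonal convention in \eqref{introduction_dij} to kill the viscosity term. Your closing remark correctly identifies why (i) is stated as conservation of $\int_{\Real^d}\bu_h^n\diff\bx$ only for version~2, while version~1 conserves $\sum_i\fm_i^n\bsfU_i^n$.
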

\begin{proof} We start with by proving (i).
We observe first that 
\begin{align*}
\sum_{i\in\intset{1}{\Nglob}} m_i^n \bsfU_i^{n} &=
\sum_{i\in\intset{1}{\Nglob}} \bsfU_i^{n}\sum_{j\in\intset{1}{\Nglob}} 
\int_{\Real^d} \psi_i^n(\bx)\psi_j^n(\bx) \diff \bx\\
&=  \sum_{j\in\intset{1}{\Nglob}} \int_{\Real^d}\sum_{i\in\intset{1}{\Nglob}}  \bsfU_i^{n}
\psi_i^n(\bx)\psi_j^n(\bx) \diff \bx = 
\sum_{j\in\intset{1}{\Nglob}} \int_{\Real^d} \bu_h^n(\bx)\psi_j^n(\bx) \diff \bx.
\end{align*}
Then the partition of unity property gives
$\sum_{i\in\intset{1}{\Nglob}} m_i^n \bsfU_i^{n} = \int_{\Real^d}
\bu_h^n\diff \bx$. We now sum over the index $i$ in
\eqref{def_of_scheme_dij} and we use again the partition of unity
property to infer that
\begin{multline*}
\frac{\int_{\Real^d}\bu_h^{n+1}\diff \bx - \int_{\Real^d}\bu_h^n\diff
  x}{\dt}- \sum_{j\in \calI(S_i^n)} \bigg(\sum_{i\in\intset{1}{\Nglob}} d_{ij}^{n}\bigg) \bsfU^n_j \\
 + \sum_{l\in\calL} \omega_l
\int_{\Real^d} \DIV\bigg(\sum_{j\in\intset{1}{\Nglob}}
(\bef(\bsfU_j^n)  - \bsfU_j^n\otimes \bsfW_j^{n})\varphi_j(\bx,t^n_l)\bigg)\diff \bx 
  = 0.
\end{multline*} 
The boundary conditions and the structure assumptions on $d_{ij}^{n}$, see
\eqref{introduction_dij},
imply the desired result. The proof of (ii) follows the same lines.
\end{proof}

\subsection{Invariant domain property} \label{Sec:Invariance}
We can now prove a result somewhat similar in spirit to Thm~5.1 from
\cite{Farhat_Geuzaine_Grandmont_2001}, although the present result is
more general since it applies to any hyperbolic system.

We start with version~2 of the scheme by defining the local minimum
mesh size $\hmin_{ij}(t)$ associated with an ordered pair of shape
functions $(\varphi_i(\cdot,t),\varphi_j(\cdot,t))$ at time $t$ as follows:
$\hmin_{ij}(t):=
\frac{1}{\|\|\GRAD\varphi_j\|_{\ell^2}\|_{L^\infty(S_{ij}(t))}},$
where $S_{ij}(t)= S_i(t)\cap S_j(t)$.  We also define $\hmin_{i}(t) =
\min_{j\in\calI(S_i(t))} \hmin_{ij}(t)$.  Given a time $t^n$, we
define a local minimum mesh size $\hmin_i^n$ and a local mesh structure
parameter $\kappa_i^n$ by
\begin{equation} \label{eq:local_mesh_gauss} \hmin_i^n:=
  \min_{l\in\calL} \hmin_{i}(t_l^n),\qquad \kappa_i^n :=
  \frac{\sum_{i\not= j\in\calI(S_i^n)}\sum_{l\in \calL} \omega_l
    \int_{S_{ij}(t_l^n)}\varphi_i(\bx,t_l^n) \diff \bx}{\sum_{l\in
      \calL} \omega_l m_i(t_l^n)}.
\end{equation}
For version~1 of the algorithm we set 
\begin{equation} \label{eq:local_mesh}
\hmin_i^n:= \hmin_{i}(t^n),\qquad
  \kappa_i^n := \frac{\sum_{i\not= j\in\calI(S_i^n)}
\int_{S_{ij}^n}\psi_i^n(\bx) \diff \bx}{\int_{S_i^n}\psi_i^n(\bx) \diff \bx}.
\end{equation} 
Note that the upper estimate $\kappa_i^n \le
\max_{j\in\intset{1}{\Nglob}} \text{card}(\calI(S_j(0)))-1$ implies
that $\kappa_i^n$ is uniformly bounded with respect to $n$ and $i$ for
both algorithms.

\begin{theorem}[Local invariance] \label{Thm:invariant_domain} Let $n\ge 0$,
  and let $i\in\intset{1}{\Nglob}$.
  $\lambda_{i,\max}^n:= \max_{j\in \calI(S_i^n)}
  (\lambda_{\max}(\bg_{j}^n,\bn_{ij}^n,\bsfU_i^n,\bsfU_j^n),$ 
$\lambda_{\max}(\bg_{i}^n,\bn_{ji}^n,\bsfU_j^n,\bsfU_i^n))$. Depending on the version of the algorithm,
version~2 or version~1 respectively, assume that $\dt$ is such that
\begin{equation}
  2\dt\frac{\lambda_{i,\max}^n}{\hmin_i^n} \kappa_i^n 
 \frac{\sum_{l\in \calL} \omega_l m_i(t_l^n)}{m_i^{n+1}}  \le 1,\quad\text{or}\quad
2\dt\frac{\lambda_{i,\max}^n}{\hmin_i^n} \kappa_i^n 
 \frac{\fm_i^n}{\fm_i^{n+1}}  \le 1.
\label{CFL_assumption}
\end{equation}
Let $B\subset \calA_\bef$ be a convex invariant set for the flux $\bef$ such that
$\{\bsfU_j^n\st j\in \calI(S_i^n)\}\subset B$, then
$\bsfU_i^{n+1} \in B$.
\end{theorem}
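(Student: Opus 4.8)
The plan is to follow the standard invariant-domain argument: I would exhibit $\bsfU_i^{n+1}$ as a convex combination of $\bsfU_i^n$ and of a family of averaged Riemann states, each of which is forced to lie in $B$. I would carry out version~1 in detail (coefficient $\fm_i^{n+1}$) and note that version~2 is identical after replacing $\fm_i^{n+1}$ by $m_i^{n+1}$ and $\bc_{ij}^n$ by the quadrature-averaged coefficients of \eqref{def_of_cij_gauss}. First I would start from the non-conservative form of Lemma~\ref{Lem:one_mi}. The two structural facts to exploit are the partition-of-unity identity $\sum_{j\in\calI(S_i^n)}\bc_{ij}^n=\bzero$ (since $\sum_j\psi_j^n\equiv1$ forces $\sum_j\GRAD\psi_j^n=\bzero$) and the diagonal convention $d_{ii}^n=-\sum_{j\ne i}d_{ij}^n$ of \eqref{introduction_dij}. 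Using the former, $\bef(\bsfU_i^n)\SCAL\sum_j\bc_{ij}^n=\bzero$, so the fluxes can be rewritten as differences and recombined through the shifted flux $\bg_j^n(\bv)=\bef(\bv)-\bv\otimes\bsfW_j^n$; using the latter, the viscous contribution becomes $\sum_{j\ne i}d_{ij}^n(\bsfU_j^n-\bsfU_i^n)$. Writing $\bc_{ij}^n=\|\bc_{ij}^n\|_{\ell^2}\bn_{ij}^n$ then recasts the scheme as
\[
\bsfU_i^{n+1}=\bsfU_i^n+\frac{\dt}{\fm_i^{n+1}}\sum_{j\ne i}\Big[-\big(\bg_j^n(\bsfU_j^n)-\bg_j^n(\bsfU_i^n)\big)\SCAL\bn_{ij}^n\,\|\bc_{ij}^n\|_{\ell^2}+d_{ij}^n(\bsfU_j^n-\bsfU_i^n)\Big].
\]

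Next I would introduce, for each $j\ne i$ with $d_{ij}^n>0$, the averaged state furnished by Lemma~\ref{Lem:elementary_Riemann_pb} for the one-dimensional Riemann problem \eqref{one_D_Riemann_problem} (flux $\bg_j^n$, normal $\bn_{ij}^n$, data $(\bsfU_i^n,\bsfU_j^n)$) evaluated at the local time $t_{ij}:=\|\bc_{ij}^n\|_{\ell^2}/(2d_{ij}^n)$, namely
\[
\overline{\bv}_{ij}:=\tfrac12(\bsfU_i^n+\bsfU_j^n)-t_{ij}\big(\bg_j^n(\bsfU_j^n)-\bg_j^n(\bsfU_i^n)\big)\SCAL\bn_{ij}^n.
\]
Matching the two terms shows that, with $\mu_{ij}:=2\dt\,d_{ij}^n/\fm_i^{n+1}$, the scheme reads $\bsfU_i^{n+1}=\big(1-\sum_{j\ne i}\mu_{ij}\big)\bsfU_i^n+\sum_{j\ne i}\mu_{ij}\overline{\bv}_{ij}$, the terms with $d_{ij}^n=0$ dropping out since the flux difference along $\bn_{ij}^n$ then vanishes as well. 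It therefore remains to check three things: that each $\overline{\bv}_{ij}\in B$, that $\mu_{ij}\ge0$, and that $\sum_{j\ne i}\mu_{ij}\le1$.

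The first point is where the hypotheses combine cleanly. By the first item of Lemma~\ref{Lem:same_invariant_sets} the set $B$, invariant for $\bef$, is also invariant for the shifted flux $\bg_j^n$. The definition \eqref{def_of_dij} guarantees $d_{ij}^n\ge\lambda_{\max}(\bg_j^n,\bn_{ij}^n,\bsfU_i^n,\bsfU_j^n)\,\|\bc_{ij}^n\|_{\ell^2}$, whence $t_{ij}\,\lambda_{\max}\le\tfrac12$; Lemma~\ref{Lem:elementary_Riemann_pb} then applies and Remark~\ref{Rem:Riemann_fan_average} places $\overline{\bv}_{ij}$ (an average over $[-\tfrac12,\tfrac12]$, an interval containing the Riemann fan of states in $B$) back inside $B$. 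Positivity of $\mu_{ij}$ is immediate from $d_{ij}^n\ge0$ and $\fm_i^{n+1}>0$. Since $\bsfU_i^n\in B$ too (because $i\in\calI(S_i^n)$), convexity of $B$ — applied exactly as in Lemma~\ref{Lem:convexity} — yields $\bsfU_i^{n+1}\in B$ as soon as $\sum_{j\ne i}\mu_{ij}\le1$.

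I expect the last inequality, i.e.\ translating the CFL condition \eqref{CFL_assumption} into $\sum_{j\ne i}\mu_{ij}=\tfrac{2\dt}{\fm_i^{n+1}}\sum_{j\ne i}d_{ij}^n\le1$, to be the main technical obstacle. The goal is to bound $\sum_{j\ne i}d_{ij}^n$ by the quantities entering \eqref{CFL_assumption}. The basic estimate is $\|\bc_{ij}^n\|_{\ell^2}\le\big\|\,\|\GRAD\psi_j^n\|_{\ell^2}\big\|_{L^\infty(S_{ij}^n)}\int_{S_{ij}^n}\psi_i^n\,\diff\bx\le\tfrac{1}{\hmin_i^n}\int_{S_{ij}^n}\psi_i^n\,\diff\bx$, where the last step uses $\hmin_i^n=\min_k\hmin_{ik}(t^n)\le\hmin_{ij}(t^n)$ from \eqref{eq:local_mesh}; the companion bound controlling $\|\bc_{ji}^n\|_{\ell^2}$ goes through the $k=i$ entry in the same minimum. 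Combining these with $d_{ij}^n\le\lambda_{i,\max}^n\max(\|\bc_{ij}^n\|_{\ell^2},\|\bc_{ji}^n\|_{\ell^2})$ and the definition of $\kappa_i^n$ in \eqref{eq:local_mesh} should reproduce precisely the factor on the left-hand side of \eqref{CFL_assumption}. The delicate bookkeeping is the asymmetry between $\int_{S_{ij}^n}\psi_i^n$ and $\int_{S_{ij}^n}\psi_j^n$ when controlling both $\|\bc_{ij}^n\|_{\ell^2}$ and $\|\bc_{ji}^n\|_{\ell^2}$ with the single mesh size $\hmin_i^n$; once this is handled, the identical estimates with the version~2 coefficients \eqref{def_of_cij_gauss} and the version~2 definitions \eqref{eq:local_mesh_gauss} close the argument for the other scheme.
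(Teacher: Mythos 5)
Your decomposition is exactly the paper's: the non-conservative form, the identities $\sum_j\bc_{ij}^n=\bzero$ and $\sum_j d_{ij}^n=0$, the auxiliary Riemann averages at local time $\|\bc_{ij}^n\|_{\ell^2}/(2d_{ij}^n)$, the convex combination with weights $2\dt\,d_{ij}^n/\fm_i^{n+1}$, and the appeal to Lemmas~\ref{Lem:same_invariant_sets} and \ref{Lem:elementary_Riemann_pb} together with Remark~\ref{Rem:Riemann_fan_average}. All of that is correct (your handling of the degenerate case $d_{ij}^n=0$ via Rankine--Hugoniot is in fact slightly more careful than the paper, which divides by $2d_{ij}^n$ without comment).

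The one step you leave open is, however, not closed by the route you sketch. To bound $\|\bc_{ji}^n\|_{\ell^2}$ you propose using the ``$k=i$ entry'' of the minimum defining $\hmin_i^n$; that gives $\|\bc_{ji}^n\|_{\ell^2}\le(\hmin_i^n)^{-1}\int_{S_{ij}^n}\psi_j^n\,\diff\bx$, and the factor $\int_{S_{ij}^n}\psi_j^n$ does \emph{not} match the $\int_{S_{ij}^n}\psi_i^n$ appearing in the definition of $\kappa_i^n$ in \eqref{eq:local_mesh}, so the asymmetry you worry about is real and your bookkeeping does not resolve it. The paper disposes of it in one line: since the problem is posed on $\Real^d$ (or periodically), integration by parts produces no boundary term and $\bc_{ji}^n=-\bc_{ij}^n$, hence $\|\bc_{ji}^n\|_{\ell^2}=\|\bc_{ij}^n\|_{\ell^2}$ and only the single estimate $\|\bc_{ij}^n\|_{\ell^2}\le(\hmin_{ij}^n)^{-1}\int_{S_{ij}^n}\psi_i^n\,\diff\bx$ is needed; summing over $j\ne i$ then yields exactly $\kappa_i^n$ and the CFL condition \eqref{CFL_assumption}. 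With that substitution your argument is complete and coincides with the paper's proof.
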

\begin{proof} We do the proof for version~2 of the algorithm.  The
  proof for version~1 is similar.  Let $i\in\intset{1}{\Nglob}$ and
  invoke \eqref{Eq:Lem:one_mi_gauss} from Lemma~\ref{Lem:one_mi_gauss} 
(or \eqref{Eq:Lem:one_mi} from Lemma~\ref{Lem:one_mi}  for version~1)
to express
  $\bsfU_i^{n+1}$ into the following from
\begin{align*}
  \bsfU_i^{n+1} &= \bsfU_i^n +\frac{\dt}{m_i^{n+1}}\sum_{j\in
    \calI(S_i^n)}
  ((\bsfU_j^n-\bsfU_i^n)\otimes\bsfW_j^{n}-\bef(\bsfU_j^n))\SCAL
  \bc_{ij}^{n} + d_{ij}^{n} \bsfU^n_j.
\end{align*}
Since the partition of unity property implies that 
$\sum_{j\in\calI(S_i^n)} \bc_{ij}^{n} =0$ and we have
$\sum_{j\in\calI(S_i^n)} d_{ij}^{n}=0$ from \eqref{introduction_dij},
we can rewrite the above equation as follows:
\begin{align*}
\bsfU_i^{n+1} &= \bsfU_i^n+ \sum_{j\in \calI(S_i^n)} d_{ij}^{n} (\bsfU^n_i + \bsfU^n_j)\\
&\hspace{0.1\linewidth} +\frac{\dt}{m_i^{n+1}}\sum_{j\in \calI(S_i^n)} ((\bsfU_j^n-\bsfU_i^n)\otimes\bsfW_j^{n} 
+\bef(\bsfU_i^n)-\bef(\bsfU_j^n))\SCAL \bc_{ij}^{n} \\
&= \bsfU_i^n\left(1+2d_{ii}^{n}\frac{\dt}{m_i^{n+1}}\right)+ \sum_{i\ne j\in \calI(S_i^n)}d_{ij}^{n} (\bsfU^n_i + \bsfU^n_j)\\
&+\frac{\dt}{m_i^{n+1}}\!\!\sum_{i\ne j\in \calI(S_i^n)}\!\!((\bsfU_j^n-\bsfU_i^n)\otimes\bsfW_j^{n} 
+\bef(\bsfU_i^n)-\bef(\bsfU_j^n))\SCAL \bc_{ij}^{n}.
\end{align*}
Recall that
$\bn_{ij}^{n}:=\bc_{ij}^{n}/\|\bc_{ij}^{n}\|_{\ell^2} \in
S^{d-1}(\bzero,1)$, and let us 
introduced the auxiliary state $\overline\bsfU_{ij}^{n+1}$
defined by
\[
\overline\bsfU_{ij}^{n+1}=(\bef(\bsfU_i^n)-\bef(\bsfU_j^n)
-(\bsfU_i^n-\bsfU_j^n)\otimes\bsfW_j^{n})\SCAL \bn_{ij}^{n}\frac{\|\bc_{ij}^{n}\|_{\ell^2}}{2d_{ij}^{n}}
 + \frac12 (\bsfU^n_i + \bsfU^n_j).
\]
Then, provided we establish that $1-\sum_{i\ne j\in
  \calI(S_i^n)} 2d_{ij}^{n}\frac{\dt}{m_i^{n+1}}\ge 0$, we have proved
that $\bsfU_i^{n+1}$ is a convex combination of $\bsfU_i^{n}$ and
$(\overline\bsfU_{ij}^{n+1})_{i\ne j\in \calI(S_i^n)}$:
\begin{equation}
\bsfU_i^{n+1} =
\bsfU_i^n\bigg(1-\sum_{i\ne j\in \calI(S_i^n)}
  2d_{ij}^{n}\frac{\dt}{m_i^{n+1}}\bigg)
+\frac{\dt}{m_i^{n+1}}\sum_{i\ne j\in \calI(S_i^n)} 2 d_{ij}^{n} \overline\bsfU_{ij}^{n+1}.
\label{convex_combination_Ui}
\end{equation}
Let us now consider the Riemann problem \eqref{one_D_Riemann_problem}.
Let $\bv(\bg_j^n,\bn_{ij}^n,\bsfU_i^n,\bsfU_j^n)$ be the solution to
\eqref{one_D_Riemann_problem} with $\bg_j^n(\bv) := \bef(\bv) -
\bv\otimes \bsfW_j^n$.  Let
$\lambda_{\max}(\bg_{j}^n,\bn_{ij}^n,\bsfU_i^n,\bsfU_j^n)$ be the
fastest wave speed in \eqref{one_D_Riemann_problem}, see
\eqref{lambda_max_shifted}.  Using the notation of
Lemma~\ref{Lem:elementary_Riemann_pb}, we then observe that
\[
\overline\bsfU_{ij}^{n+1} =
\overline\bv(t,\bg_j^n,\bn_{ij}^{n},\bsfU_i^n,\bsfU_j^n)
\] 
with $t=\frac{\|\bc_{ij}^{n}\|_{\ell^2}}{2d_{ij}^{n}}$, provided
$t\lambda_{\max}(\bg_j^n,\bn_{ij}^{n},\bsfU_i^n,\bsfU_j^n)\le
\frac12$.  Note that the definition of $d_{ij}^{n}$,
\eqref{def_of_dij}, implies that the condition
$t\lambda_{\max}(\bg_j^n,\bn_{ij}^{n},\bsfU_i^n,\bsfU_i^n)\le \frac12$
is satisfied. Since $B$ is an invariant set for the flux $\bef$, by
Lemma~\ref{Lem:same_invariant_sets}, $B$ is also an invariant set for
the flux $\bg_j^n$.  Since, in addition, $B$ contains the data
$(\bsfU_i^n,\bsfU_j^n)$, we conclude that $\overline\bsfU_{ij}^{n+1}=
\overline\bv(t,\bg_j^n,\bn_{ij}^{n},\bsfU_i^n,\bsfU_j^n)\in B$; see
Remark~\ref{Rem:Riemann_fan_average}.  In conclusion,
$\bsfU_i^{n+1}\in B$ since $\bsfU_i^{n+1}$ is a convex combination of
objects in $B$.

Setting $\calY:=\sum_{i\not= j\in\calI(S_{ij}(t_l^n))}\frac{2\dt
  d_{ij}^n}{m_i^{n+1}}$, there remains to establish that $1-\calY\ge 0$ to complete the proof for
version~2 of the algorithm.  Note first that
\begin{align*}
  \|\bc_{ij}(t_l^n)\|_{\ell^2} & \le \int_{S_{ij}(t_l^n)}
  \|\GRAD\varphi_j(\bx,t^n_l)\|_{\ell^2} \varphi_i(\bx,t^n_l)\diff \bx
  \le \hmin_{ij}^{-1}(t_l^n) \int_{S_{ij}(t_l^n)}\varphi_i(\bx,t_l^n)
  \diff \bx.
\end{align*}
Notice that $\bc_{ij}(t_l^n)=-\bc_{ji}(t_l^n)$ because there are no
boundary conditions (\ie we solve the Cauchy problem in $\Real^d$, or the domain is periodic);
hence $\|\bc_{ji}(t_l^n)\|_{\ell^2} =\|\bc_{ij}(t_l^n)\|_{\ell^2}$.
Recalling the definition of $d_{ij}^n$, we have
\begin{align*}
\calY
& \le 2\dt\frac{\lambda_{i,\max}^n}{\hmin_i^n}  
\frac{\sum_{l\in \calL} \omega_l m_i(t_l^n)}{m_i^{n+1}}
\frac{\sum_{i\not= j\in\calI(S_{ij}(t_l^n))}\sum_{l\in \calL} \omega_l
\int_{S_{ij}(t_l^n)}\varphi_i(\bx,t_l^n) \diff \bx}{\sum_{l\in \calL} \omega_l m_i(t_l^n)}
\\
&\le 2\dt\frac{\lambda_{\max}^n}{\hmin_i^n}  
\frac{\sum_{l\in \calL} \omega_l m_i(t_l^n)}{m_i^{n+1}} \kappa^n_i \le 1,
\end{align*}
which is the desired result. The proof of the CFL condition for
version~1 of the algorithm follows the same lines.  This concludes the
proof.
\end{proof}

\begin{corollary}
  Let $n\in \polN$.  Assume that $\dt$ is small enough so that the CFL
  condition \eqref{CFL_assumption} holds for all
  $i\in\intset{1}{\Nglob}$. Let $B\subset \calA_\bef$ be a convex
  invariant set.  Assume that $\{\bsfU_i^n \st
  i\in\intset{1}{\Nglob}\}\subset B$. Then {\em (i)} $\{\bsfU_i^{n+1}
  \st i\in\intset{1}{\Nglob}\}\subset B$ {\em (ii)} $\bu_h^n\in B$ and
  $\bu_h^{n+1}\in B$.
\end{corollary}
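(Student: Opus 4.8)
The plan is to read this statement as the global packaging of the local Theorem~\ref{Thm:invariant_domain}, with part (ii) obtained from part (i) by the pointwise convexity argument of Lemma~\ref{Lem:convexity}. There is no genuine obstacle here; the only thing to verify is that the global hypotheses feed exactly the local hypotheses required upstream.

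First I would establish (i). Fix an arbitrary $i\in\intset{1}{\Nglob}$. Since $\calI(S_i^n)\subset\intset{1}{\Nglob}$, the global assumption $\{\bsfU_j^n\st j\in\intset{1}{\Nglob}\}\subset B$ immediately restricts to the local assumption $\{\bsfU_j^n\st j\in\calI(S_i^n)\}\subset B$ demanded by Theorem~\ref{Thm:invariant_domain}. Combining this with the standing hypothesis that the CFL condition \eqref{CFL_assumption} holds for this $i$, Theorem~\ref{Thm:invariant_domain} yields $\bsfU_i^{n+1}\in B$. As $i$ was arbitrary, $\{\bsfU_i^{n+1}\st i\in\intset{1}{\Nglob}\}\subset B$, which is (i). This works uniformly for both versions of the scheme, since \eqref{CFL_assumption} is assumed in whichever form is relevant.

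Then I would deduce (ii) from Lemma~\ref{Lem:convexity}, applied cell by cell with the convex set $A=B$. For the statement at time $t^n$, I write $\bu_h^n=\sum_{i\in\intset{1}{\Nglob}}\bsfU_i^n\psi_i^n\in\bP_m(\calT_h^n)$; since $\{\bsfU_i^n\}\subset B$ by hypothesis, Lemma~\ref{Lem:convexity} gives $\bu_h^n(\bx)\in B$ for every $\bx\in K$ and every $K\in\calT_h^n$, \ie $\bu_h^n\in B$. For the statement at time $t^{n+1}$, I feed in the conclusion of part (i), namely $\{\bsfU_i^{n+1}\}\subset B$, and apply Lemma~\ref{Lem:convexity} to $\bu_h^{n+1}=\sum_{i\in\intset{1}{\Nglob}}\bsfU_i^{n+1}\psi_i^{n+1}\in\bP_m(\calT_h^{n+1})$ to obtain $\bu_h^{n+1}\in B$.

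Since every step is a direct invocation of an already-proved result, I expect no hard part. The one point worth stating explicitly is the logical ordering: the claim $\bu_h^{n+1}\in B$ cannot be obtained before part (i), because it relies on the nodal values $\bsfU_i^{n+1}$ lying in $B$, whereas $\bu_h^n\in B$ uses only the hypothesis at time $t^n$.
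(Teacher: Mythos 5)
Your proposal is correct and follows exactly the paper's argument: part~(i) is the pointwise application of Theorem~\ref{Thm:invariant_domain} (the global hypothesis trivially restricts to the local one on each $\calI(S_i^n)$), and part~(ii) is Lemma~\ref{Lem:convexity} with $A=B$ applied at $t^n$ and, using the output of~(i), at $t^{n+1}$. Your remark on the logical ordering is a useful clarification but does not change the substance.
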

\begin{proof}
  The statement (i) is a direct consequence of
  Theorem~\ref{Thm:invariant_domain}.  The statement (ii) is a
  consequence of \eqref{mesh_convexity_assumption} from
  Lemma~\ref{Lem:convexity}.
\end{proof}

\begin{corollary} \label{Cor:invariance} Let $B\subset \calA_\bef$ be a convex
  invariant set containing the initial data $\bu_0$.  Assume that
  $\{\bsfU_i^0 \st i\in\intset{1}{\Nglob}\}\subset B$. Let $N\in
  \polN$.  Assume that $\dt$ is small enough so that the CFL condition
  \eqref{CFL_assumption} holds for all $i\in\intset{1}{\Nglob}$ and
  all $n\in\intset{0}{N}$.  Then $\{\bsfU_i^n \st
  i\in\intset{1}{\Nglob}\}\subset B$ and $\bu_h^{n}\in B$ for all
  $n\in\intset{0}{N+1}$.
\end{corollary}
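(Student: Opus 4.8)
The plan is to argue by a straightforward induction on the time index $n$, the single-step increment being furnished verbatim by the preceding corollary. I would fix the convex invariant set $B\subset\calA_\bef$ and take as induction hypothesis at level $n\in\intset{0}{N}$ that $\{\bsfU_i^n \st i\in\intset{1}{\Nglob}\}\subset B$. The base case $n=0$ is precisely the standing assumption $\{\bsfU_i^0 \st i\in\intset{1}{\Nglob}\}\subset B$.

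For the inductive step, I would observe that the CFL condition \eqref{CFL_assumption} is assumed to hold for every $i\in\intset{1}{\Nglob}$ at each time level $n\in\intset{0}{N}$, so all the hypotheses of the preceding corollary are met at level $n$: $B$ is a convex invariant set, the CFL bound holds for all $i$, and $\{\bsfU_i^n \st i\in\intset{1}{\Nglob}\}\subset B$ by the induction hypothesis. Part~(i) of that corollary then yields $\{\bsfU_i^{n+1} \st i\in\intset{1}{\Nglob}\}\subset B$, which advances the induction. Iterating from $n=0$ up to $n=N$ establishes $\{\bsfU_i^n \st i\in\intset{1}{\Nglob}\}\subset B$ for every $n\in\intset{0}{N+1}$.

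It then remains to pass from the nodal values to the finite element field. Once $\{\bsfU_i^n \st i\in\intset{1}{\Nglob}\}\subset B$ is known for a given $n$, the inclusion $\bu_h^n\in B$ follows immediately: for any cell $K$ the subfamily $(\bsfU_i^n)_{i\in\calI(K)}$ lies in the convex set $B$, so \eqref{mesh_convexity_assumption} from Lemma~\ref{Lem:convexity} gives $\bu_h^n(\bx)\in B$ for all $\bx\in K$, whence $\bu_h^n\in B$. Equivalently, one may simply invoke part~(ii) of the preceding corollary at each level. Applying this for every $n\in\intset{0}{N+1}$ completes the argument.

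There is essentially no analytic obstacle here; the only points requiring care are bookkeeping ones. I would check that the range of time levels over which the CFL condition is assumed, namely $n\in\intset{0}{N}$, is exactly what is needed to run the single-step corollary $N+1$ times and reach level $N+1$, and I would note that the global inclusion $\{\bsfU_i^n \st i\in\intset{1}{\Nglob}\}\subset B$ trivially implies the localized hypothesis $\{\bsfU_j^n \st j\in\calI(S_i^n)\}\subset B$ used in Theorem~\ref{Thm:invariant_domain}, since $\calI(S_i^n)\subset\intset{1}{\Nglob}$. All the genuine work, namely the convex-combination decomposition and the Riemann-fan averaging, has already been carried out in Theorem~\ref{Thm:invariant_domain}, so this corollary is purely a propagation statement.
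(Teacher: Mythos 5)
Your proof is correct and is exactly the argument the paper intends: the corollary is stated without proof there precisely because it is the immediate induction on $n$ using the preceding corollary (itself resting on Theorem~\ref{Thm:invariant_domain} and Lemma~\ref{Lem:convexity}), which is what you carry out. Your bookkeeping of the index range ($N+1$ applications of the one-step result, covering $n\in\intset{0}{N+1}$) and the localization remark about $\calI(S_i^n)\subset\intset{1}{\Nglob}$ are both accurate.
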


\begin{remark}[Construction of $\bu_h^0$]
  Let $B\subset \calA_\bef$ be a convex invariant set containing the
  initial data $\bu_0$. If $\bP_m(\calT_h^0)$ is composed of piecewise
  Lagrange elements, then defining $\bu_h^0$ to be the Lagrange
  interpolant of $\bu_0$, we have $\{\bsfU_i^0 \st
  i\in\intset{1}{\Nglob}\}\subset B$. Similarly if $\bP_m(\calT_h^0)$ is
  composed of Bernstein finite elements of degree two and higher, then
  defining $\bu_h^0$ to be the Bernstein interpolant of $\bu_0$ we
  have $\{\bsfU_i^0 \st i\in\intset{1}{\Nglob}\}\subset B$; see
  \cite[Eq. (2.72)]{Lai_Schumaker_2007}. Note that the approximation
  of $\bu_0$ is only second-order accurate in this case independently
  of the polynomial degree of the Bernstein polynomials; see
  \citep[Thm.~2.45]{Lai_Schumaker_2007}. In both cases the assumptions
  of Corollary~\ref{Cor:invariance} hold.
\end{remark}

\subsection{Discrete Geometric Conservation Law} Both
the scheme~\eqref{def_of_scheme_dij} and the scheme~\eqref{def_of_scheme_dij_gauss}
satisfy the so-called Discrete
Geometric Conservation Law (DGCL), \ie they preserve constant states.
\begin{corollary}[DGCL]  \label{Cor:DGCL}
The schemes~\eqref{def_of_scheme_dij} and \eqref{def_of_scheme_dij_gauss} 
preserve constant states.
In particular if $\bsfU_j^n=\bsfU_i^n$ for all $j\in\calI(S_i^n)$, then
$\bsfU_i^{n+1}=\bsfU_i^n$.
\end{corollary}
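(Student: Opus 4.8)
The plan is to work directly from the non-conservative reformulations of the two schemes, namely \eqref{Eq:Lem:one_mi} in Lemma~\ref{Lem:one_mi} for version~1 and \eqref{Eq:Lem:one_mi_gauss} in Lemma~\ref{Lem:one_mi_gauss} for version~2. Both display the increment $\bsfU_i^{n+1}-\bsfU_i^n$ multiplied by a strictly positive mass ($\fm_i^{n+1}$ or $m_i^{n+1}$) against the \emph{same} right-hand side
\[
\sum_{j\in \calI(S_i^n)}\big((\bsfU_j^n-\bsfU_i^n)\otimes\bsfW_j^{n}-\bef(\bsfU_j^n)\big)\SCAL \bc_{ij}^{n} + d_{ij}^{n}\bsfU_j^n .
\]
Hence it suffices to show this expression vanishes whenever $\bsfU_j^n=\bsfU_i^n$ for every $j\in\calI(S_i^n)$; the conclusion $\bsfU_i^{n+1}=\bsfU_i^n$ then follows by dividing by the positive mass, and treats both versions simultaneously.

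First I would set $\bsfU:=\bsfU_i^n$ and use the hypothesis $\bsfU_j^n=\bsfU$ to cancel the ALE-velocity contribution term by term, since $\bsfU_j^n-\bsfU_i^n=\bzero$ for each $j$. Second, the flux contribution collapses to $-\bef(\bsfU)\SCAL\sum_{j\in\calI(S_i^n)}\bc_{ij}^{n}$, which vanishes because the partition-of-unity property gives $\sum_{j\in\calI(S_i^n)}\bc_{ij}^{n}=\int_{S_i^n}\GRAD\big(\sum_{j}\psi_j^n\big)\psi_i^n\diff\bx=\bzero$ (the same identity already invoked in the proof of Theorem~\ref{Thm:invariant_domain}). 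Third, the artificial-viscosity contribution becomes $\bsfU\sum_{j\in\calI(S_i^n)}d_{ij}^{n}$, which vanishes because the structural assumption \eqref{introduction_dij} on the $d_{ij}^{n}$ (with $d_{ii}^n=-\sum_{i\ne j}d_{ji}^n$ and $d_{ij}^n=d_{ji}^n$) forces $\sum_{j\in\calI(S_i^n)}d_{ij}^{n}=0$.

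There is no substantive obstacle here: the argument is purely algebraic and rests entirely on two sum-to-zero identities established earlier. The only point deserving a word of care is that the flux identity must hold for version~2 as well, where $\bc_{ij}^{n}=\sum_{l\in\calL}\omega_l\,\bc_{ij}(t_l^n)$ is a quadrature average over the intermediate meshes $\calT_h(t_l^n)$; since the partition of unity holds on each such mesh, each $\bc_{ij}(t_l^n)$ sums to zero over $j$, and the weighted combination inherits this. Finally, to obtain the global statement that constant states are preserved, I would apply the pointwise result at every index $i$: if all $\bsfU_j^n$ equal a common constant, then in particular $\bsfU_j^n=\bsfU_i^n$ for all $j\in\calI(S_i^n)$ and all $i$, so $\bsfU_i^{n+1}=\bsfU_i^n$ for every $i$ and the constant is left unchanged.
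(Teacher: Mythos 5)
Your proposal is correct and follows essentially the same route as the paper: both arguments start from the non-conservative forms \eqref{Eq:Lem:one_mi} and \eqref{Eq:Lem:one_mi_gauss} and reduce the claim to the two sum-to-zero identities $\sum_{j\in\calI(S_i^n)}\bc_{ij}^{n}=\bzero$ (partition of unity, valid on each intermediate mesh for version~2) and $\sum_{j\in\calI(S_i^n)}d_{ij}^{n}=0$ from \eqref{introduction_dij}. The only cosmetic difference is that the paper first rewrites the update using these identities and then substitutes the constant state, whereas you substitute first and then cancel term by term.
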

\begin{proof}
  The partition of unity property implies that
  $\sum_{j\in\calI(S_i^n)} \bc_{ij}^{n} =0$ for both schemes. Moreover,
the definition $d_{ij}^{n}$, which is common for both schemes, implies that
  $\sum_{j\in\calI(S_i^n)} d_{ij}^{n}=0$
  (see \eqref{introduction_dij}). For the scheme~\eqref{def_of_scheme_dij_gauss},
Lemma~\ref{Lem:one_mi_gauss}, which we
  recall is a consequence of Lemma~\ref{Lem:mass_transformation}, implies that
\begin{align*}
\bsfU_i^{n+1} = \bsfU_i^n &+ d_{ij}^{n} (\bsfU^n_j -\bsfU^n_i) \\
&+\frac{\dt}{m_i^{n+1}}\sum_{j\in \calI(S_i^n)} ((\bsfU_j^n-\bsfU_i^n)\otimes\bsfW_j^{n} 
+\bef(\bsfU_i^n)-\bef(\bsfU_j^n))\SCAL \bc_{ij}^{n} .
\end{align*}
For the scheme~\eqref{def_of_scheme_dij}, Lemma~\ref{Lem:one_mi} implies that
\begin{align*}
\bsfU_i^{n+1} = \bsfU_i^n &+ d_{ij}^{n} (\bsfU^n_j -\bsfU^n_i) \\
&+\frac{\dt}{\fm_i^{n+1}}\sum_{j\in \calI(S_i^n)} ((\bsfU_j^n-\bsfU_i^n)\otimes\bsfW_j^{n} 
+\bef(\bsfU_i^n)-\bef(\bsfU_j^n))\SCAL \bc_{ij}^{n} .
\end{align*}
It is now clear that if $\bsfU_j^n=\bsfU_i^n$ for all $j\in\calI(S_i^n)$, then
$\bsfU_i^{n+1}=\bsfU_i^n$.
\end{proof}

\begin{remark}[DGCL] Note that although the DGCL seems to be
  given some importance in the literature, Corollary~\ref{Cor:DGCL} has no
    particular significance. For scheme~2, it is a direct consequence
    of Lemma~\ref{Lem:mass_transformation} which is invoked to rewrite
    the scheme \eqref{def_of_scheme_dij_gauss} from the conservative
    form to the equivalent nonconservative form
    \eqref{Eq:Lem:one_mi_gauss}.  For scheme~1, it is a direct
    consequence of the definition of the mass update
    \eqref{alg1_def_of_fmi} which is invoked to rewrite the scheme
    \eqref{def_of_scheme_dij} from the conservative form to the
    equivalent nonconservative form \eqref{Eq:Lem:one_mi}.  The
    nonconservative form of both schemes is essential to prove the
    invariant domain property. In other words, {\em the DGCL is
    just a consequence of the equivalence of the discrete conservative
    and nonconservative formulations.}
\end{remark}

\subsection{Discrete entropy inequality} 
In this section we prove a discrete entropy inequality which is
consistent with the inequality stated in
Lemma~\ref{Lem:ALE_hyperbolic_system_entropy}.

\begin{theorem} \label{Thm:disrete_entropy_inequality}
  Let $(\eta,\bq)$ be an entropy pair for
  \eqref{def:hyperbolic_system}. Let $n\in\polN$ and
  $i\in\intset{1}{\Nglob}$. Assume that all the assumptions of
  Theorem~\ref{Thm:invariant_domain} hold. Then the following discrete
  entropy inequality holds for scheme~1:
\begin{multline}
\frac{1}{\dt}\big(\fm_i^{n+1}\eta(\bsfU_i^{n+1}) - \fm_i^{n}\eta(\bsfU_i^{n})\big) 
\le -\sum_{j\in \calI(S_i^n)} d_{ij}^n \eta(\bsfU_{j}^{n}) \\
 - \int_{\Real^d} \DIV\bigg(\sum_{j\in\calI(S_i^n)}
(\bq(\bsfU_j^{n}) -  \eta(\bsfU_j^{n})\bsfW_j^{n})\psi_j^n(\bx)\bigg)
\psi_i^n(\bx)\diff \bx
\end{multline}
and the following holds for scheme~2:
\begin{multline}
\frac{1}{\dt}\big(m_i^{n+1}\eta(\bsfU_i^{n+1}) - m_i^{n}\eta(\bsfU_i^{n})\big) 
\le -\sum_{j\in \calI(S_i^n)} d_{ij}^n \eta(\bsfU_{j}^{n}) \\
 - \sum_{l\in\calL} \omega_l \int_{\Real^d} \DIV\bigg(\sum_{j\in\calI(S_i^n)}
(\bq(\bsfU_j^{n}) -  \eta(\bsfU_j^{n})\bsfW_j^{n})\varphi_j(\bx,t_l^n)\bigg)
\varphi_i(\bx,t_l^n)\diff \bx
\end{multline}
\end{theorem}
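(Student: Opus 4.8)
The plan is to follow the architecture of the proof of Theorem~\ref{Thm:invariant_domain}, replacing the invariant-set/convex-hull argument by the convexity of $\eta$ (Jensen's inequality) combined with the entropy estimate \eqref{entropy_elementary_Riemann_pb}. I would carry out the argument in detail for scheme~1; scheme~2 is identical once $\fm_i^{n+1}$ is replaced by $m_i^{n+1}$, Lemma~\ref{Lem:one_mi_gauss} is used in place of Lemma~\ref{Lem:one_mi}, and the single integral against $\psi_i^n$ is replaced by the quadrature sum $\sum_{l\in\calL}\omega_l\int_{\Real^d}(\cdots)\varphi_i(\bx,t_l^n)\diff\bx$ dictated by the definition \eqref{def_of_cij_gauss} of $\bc_{ij}^n$.

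First I would reuse the convex-combination representation \eqref{convex_combination_Ui} of $\bsfU_i^{n+1}$ obtained in the proof of Theorem~\ref{Thm:invariant_domain}, whose weights are nonnegative precisely under the CFL condition \eqref{CFL_assumption} assumed here. Since $\eta$ is convex, Jensen's inequality applied to this convex combination, after multiplication by $\fm_i^{n+1}>0$, bounds $\fm_i^{n+1}\eta(\bsfU_i^{n+1})$ from above by $\fm_i^{n+1}\eta(\bsfU_i^n)$, minus $\dt\,\eta(\bsfU_i^n)\sum_{i\ne j}2d_{ij}^n$, plus $\dt\sum_{i\ne j}2d_{ij}^n\,\eta(\overline\bsfU_{ij}^{n+1})$, where $\overline\bsfU_{ij}^{n+1}$ is the Riemann average appearing in that proof.

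Next I would estimate each $\eta(\overline\bsfU_{ij}^{n+1})$. Recall that $\overline\bsfU_{ij}^{n+1}=\overline\bv(t,\bg_j^n,\bn_{ij}^n,\bsfU_i^n,\bsfU_j^n)$ with $t=\|\bc_{ij}^n\|_{\ell^2}/(2d_{ij}^n)$ and $\bg_j^n(\bv)=\bef(\bv)-\bv\otimes\bsfW_j^n$, and that the choice \eqref{def_of_dij} of $d_{ij}^n$ ensures $t\lambda_{\max}\le\tfrac12$, so that Lemma~\ref{Lem:elementary_Riemann_pb} is applicable. By the second part of Lemma~\ref{Lem:same_invariant_sets}, $(\eta,\bq-\eta\bsfW_j^n)$ is an entropy pair for $\bg_j^n$; hence \eqref{entropy_elementary_Riemann_pb} applied with this flux, this entropy pair, and the states $(\bsfU_i^n,\bsfU_j^n)$, together with $t\bn_{ij}^n=\bc_{ij}^n/(2d_{ij}^n)$, yields $2d_{ij}^n\eta(\overline\bsfU_{ij}^{n+1})\le d_{ij}^n(\eta(\bsfU_i^n)+\eta(\bsfU_j^n))-\big((\bq(\bsfU_j^n)-\eta(\bsfU_j^n)\bsfW_j^n)-(\bq(\bsfU_i^n)-\eta(\bsfU_i^n)\bsfW_j^n)\big)\SCAL\bc_{ij}^n$.

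Finally I would assemble the terms. Substituting the previous estimate into the Jensen bound, the coefficients of $\eta(\bsfU_i^n)$ simplify via $d_{ii}^n=-\sum_{i\ne j}d_{ij}^n$ and, together with the off-diagonal $\eta(\bsfU_j^n)$ contributions, collapse into the diffusion sum $\sum_{j\in\calI(S_i^n)}d_{ij}^n\eta(\bsfU_j^n)$ of the right-hand side. To produce the advertised divergence form I would then convert $\fm_i^{n+1}\eta(\bsfU_i^n)$ into $\fm_i^n\eta(\bsfU_i^n)$ by means of the mass update $\fm_i^{n+1}-\fm_i^n=\dt\sum_j\bsfW_j^n\SCAL\bc_{ij}^n$ established in the proof of Lemma~\ref{Lem:one_mi}, which supplies the correction $\dt\,\eta(\bsfU_i^n)\sum_j\bsfW_j^n\SCAL\bc_{ij}^n$. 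The decisive step is the bookkeeping of the flux terms: the neighbour-dependent velocity $\bsfW_j^n$ that multiplies the left state $\bsfU_i^n$ in $\bg_j^n$ must be reconciled with the global divergence form. This is done using the partition-of-unity identities $\sum_{j}\bc_{ij}^n=0$ and $\sum_j d_{ij}^n=0$; the correction term from the mass update is exactly what is needed to absorb the $\eta(\bsfU_i^n)\bsfW_j^n$ contributions, after which all inner products collapse into $-\sum_{j\in\calI(S_i^n)}(\bq(\bsfU_j^n)-\eta(\bsfU_j^n)\bsfW_j^n)\SCAL\bc_{ij}^n=-\int_{\Real^d}\DIV\big(\sum_j(\bq(\bsfU_j^n)-\eta(\bsfU_j^n)\bsfW_j^n)\psi_j^n\big)\psi_i^n\diff\bx$ by \eqref{def_of_cij}. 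I expect this last reorganization, reconciling the per-neighbour velocities $\bsfW_j^n$ with the single divergence form, to be the main obstacle; the convexity and Riemann-fan steps are routine given Lemmas~\ref{Lem:elementary_Riemann_pb} and~\ref{Lem:same_invariant_sets}.
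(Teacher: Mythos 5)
Your proposal follows exactly the paper's own argument: the convex-combination representation \eqref{convex_combination_Ui} plus convexity of $\eta$, the bound on $\eta(\overline\bsfU_{ij}^{n+1})$ via \eqref{entropy_elementary_Riemann_pb} applied to the shifted flux $\bg_j^n$ with the translated entropy pair from Lemma~\ref{Lem:same_invariant_sets}, and the final reorganization using the mass-update identity and $\sum_j\bc_{ij}^n=0$ to reach the divergence form. The only cosmetic difference is that you detail scheme~1 and defer scheme~2, whereas the paper does the reverse; the substance is identical.
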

\begin{proof}
We only do the proof for scheme~2. The proof for scheme~1 is similar.  
  Let $(\eta,\bq)$ be an entropy pair for the hyperbolic
  system~\eqref{def:hyperbolic_system}. Let $i\in\intset{1}{\Nglob}$
  and let $n\in\polN$.  Then using~\eqref{convex_combination_Ui}, the
  CFL condition and the convexity of $\eta$, we have
\begin{align*}
\eta(\bsfU_i^{n+1}) \le
\eta(\bsfU_i^n)\bigg(1-\sum_{i\ne j\in \calI(S_i^n)}
  2d_{ij}^{n}\frac{\dt}{m_i^{n+1}}\bigg)
+\frac{\dt}{m_i^{n+1}}\sum_{i\ne j\in \calI(S_i^n)} 2 d_{ij}^{n} \eta(\overline\bsfU_{ij}^{n+1}).
\end{align*}
This can also be rewritten as follows:
\begin{align*}
\frac{m_i^{n+1}}{\dt}\big(\eta(\bsfU_i^{n+1}) - \eta(\bsfU_i^{n})\big) \le
\sum_{i\ne j\in \calI(S_i^n)} 2 d_{ij}^{n} (\eta(\overline\bsfU_{ij}^{n+1})-\eta(\bsfU_i^n)).
\end{align*}
Owing to \eqref{entropy_elementary_Riemann_pb} from
Lemma~\ref{Lem:elementary_Riemann_pb}, and recalling that 
the entropy flux of the Riemann problem~\eqref{one_D_Riemann_problem} is
$(\bq(v) - \eta(v) \bsfW_j^n)\SCAL\bn_{ij}^n$ 
we infer that
\[
\eta(\overline\bsfU_{ij}^{n+1}) \le \tfrac12(\eta(\bsfU_{i}^{n})+\eta(\bsfU_{j}^{n}))
-t \big(\bq(\bsfU_{j}^{n}) - \eta(\bsfU_{j}^{n}) \bsfW_j^n
- \bq(\bsfU_{i}^{n}) + \eta(\bsfU_{i}^{n}) \bsfW_j^n\big)\SCAL\bn_{ij}^n
\]
with $t=\|\bc_{ij}^{n}\|_{\ell^2}/2 d_{ij}^n$. 
Inserting this inequality in the first one, we have
\begin{multline*}
\frac{m_i^{n+1}}{\dt}\big(\eta(\bsfU_i^{n+1}) - \eta(\bsfU_i^{n})\big) 
\le  \sum_{j\in \calI(S_i^n)}  d_{ij}^{n} (\eta(\bsfU_j^n)-\eta(\bsfU_i^n))\\
- \sum_{j\in \calI(S_i^n)} \|\bc_{ij}^{n}\|_{\ell^2} \big(\bq(\bsfU_{j}^{n})- \bq(\bsfU_{i}^{n}) - (\eta(\bsfU_{j}^{n}) 
 - \eta(\bsfU_{i}^{n})) \bsfW_j^n\big)\SCAL\bn_{ij}^n.
\end{multline*}
By proceeding as in the proof of Lemma~\ref{Lem:one_mi_gauss},
we observe that
\[
\frac{m_i^{n+1} -m_i^n}{\dt}
= \sum_{l\in\calL} \omega_l \int_{\Real^d} \varphi_i(\bx,t^n_l)
  \sum_{j\in\calI(S_i^n)} \bsfW_j^{n} \SCAL \GRAD\varphi_j(\bx,t^n_l)
  \diff \bx 
 =  \sum_{j\in\calI(S_i^n)} \bsfW_j^{n}\SCAL\bc_{ij}^{n}.
\]
Then using that $\|\bc_{ij}^{n}\|_{\ell^2}\bn_{ij}^n = \bc_{ij}^{n}$, we conclude that
\begin{multline*}
\frac{1}{\dt}\big(m_i^{n+1}\eta(\bsfU_i^{n+1}) - m_i^{n}\eta(\bsfU_i^{n})\big) 
\le -\sum_{j\in \calI(S_i^n)} d_{ij} \eta(\bsfU_{j}^{n}) \\
 - \sum_{l\in\calL} \omega_l \int_{\Real^d} \DIV\bigg(\sum_{j\in\calI(S_i^n)}
(\bq(\bsfU_j^{n}) -  \eta(\bsfU_j^{n})\bsfW_j^{n})\varphi_j(\bx,t_l^n)\bigg)
\varphi_i(\bx,t_l^n)\diff \bx
\end{multline*}
This concludes the proof.
\end{proof}

\section{Numerical tests} \label{Sec:numerical_tests}
In this section, we numerically illustrate the performance of
scheme~1 using SSP RK3.  All the tests reported below
have also been done with version~2 and we have observed that the
method works as advertised when used with Euler time stepping, but we do
not show the results for brevity. As expected from
Remark~\ref{Rem:SSP_v2}, we have indeed observed very small violations
of the invariant domain (maximum principle in the scalar case) when version~2 is combined with SSP RK3.

All the tests have been done with two different codes. One code is
written in F95 and uses $\polP_1$ Lagrange elements on triangles.  The
other code is based on deal.ii~\citep{BangerthHartmannKanschat2007},
is written in C++ and uses $\polQ_1$ Lagrange elements on
quadrangles. The mesh composed of triangles is obtained by dividing
all the quadrangles into two triangles. The same numbers of degrees of
freedom are used for both codes.

\subsection{Analytical scalar-valued solution}
To test the convergence property of the SSP RK3 version of scheme~1, as described in
Algorithm~\ref{alg:spp_rk3}, we solve the linear transport equation in the domain 
$\Dom^0=(0,1)^2$:
\begin{equation}
    \label{test_rotation_jlg}
    \partial_t u + \DIV (\bbetaa u) = 0,\quad
    u_0(\bx) = x_1+x_2,
\end{equation} 
where $\bbetaa = (\sin(\pi x_1)\cos(\pi x_2) \cos(2\pi t),-\cos(\pi
x_1)\sin(\pi x_2) \cos(2\pi t))\tr$.  
In both codes the ALE velocity is chosen by setting
$\bsfW_i^n=\bbetaa(\ba_i^n)$, \ie $\bw_h^n$ is the Lagrange
interpolant of $\bbetaa$ on $\calT_h^n$. Notice that there is no issue with
boundary condition since $\bbetaa\SCAL \bn_{|\front^0}=0$.
\begin{table}[h]
\begin{center}
\caption{Rotation problem \eqref{test_rotation_jlg} with Lagrangian formulation, CFL=1.0}
\label{tab:rotation_cfl10_jlg}
\begin{tabular}{|r|c|c|c|c||c|c|c|c|} \hline
 &  \multicolumn{4}{|c||}{Without viscosity} & \multicolumn{4}{|c|}{With viscosity} \\ \hline
\!\!\# dofs & \multicolumn{2}{|c|}{$\polQ_1$, $L^1$-norm}& \multicolumn{2}{|c||}{$\polP_1$, $L^1$-norm}
& \multicolumn{2}{|c|}{$\polQ_1$, $L^1$-norm}& \multicolumn{2}{|c|}{$\polP_1$, $L^1$-norm} \\ \hline
81   &  6.46E-04 & -   & 1.76E-03 & -    &1.31E-02 & -   & 1.13E-02 & - \\ \hline
289  &  1.16E-04 & 2.48& 2.46E-04 & 2.85 &4.28E-03 & 1.61& 3.63E-03 &1.64 \\ \hline
1089 &  1.41E-05 & 3.03& 3.23E-05 & 2.93 &1.23E-03 & 1.80& 1.04E-03 &1.80 \\ \hline
4225 &  1.76E-06 & 3.01& 4.20E-06 & 2.94 &3.29E-04 & 1.90& 2.78E-04 &1.90 \\ \hline
16641&  2.26E-07 & 2.96& 5.76E-07 &	2.87 &8.50E-05 & 1.95& 7.19E-05 &1.95 \\ \hline
66049&  2.82E-08 & 3.00& 9.57E-08 & 2.59 &2.16E-05 & 1.97& 1.83E-05 &1.98 \\ \hline
\end{tabular}
\end{center}
\end{table}
We first test the accuracy in time of the algorithm by setting
$d_{ij}^n=0$, \ie the viscosity is removed.  We report the error
measured in the $L^1$-norm at time $t=0.5$ in the left part of
Table~\ref{tab:rotation_cfl10_jlg}. The computations are done with $CFL=1$.
The third-order convergence in
time is confirmed. Note that there is no space error due to the
particular choice of the ALE velocity and the initial data. 
 
In the second test we put back the viscosity $d_{ij}^n$. Notice that
the particular choice of the ALE velocity implies that
$\lambda_{\max}(\bg_{j}^n,\bn_{ij}^n,\bsfU_i^n,\bsfU_j^n)=|(\bbetaa^n_i-\bbetaa^n_j)\cdot
\bn^n_{ij}|$; hence the viscosity is second-order in space instead of
being first-order. This phenomenon makes the algorithm second-order in
space (in addition to being conservative and maximum principle
preserving). The error in the
$L^1$-norm at time $t=0.5$ is shown in the right part of   
Table~\ref{tab:rotation_cfl10_jlg}. 

\subsection{Nonlinear scalar conservation equations}
We now test scheme~1 on nonlinear scalar conservation equations.
\subsubsection{Definition of the ALE velocity}
In nonlinear conservation equations, solutions may develop shocks in finite
time.  In this case, using the purely Lagrangian velocity leads to a
breakdown on the method in finite time. The breakdown manifests itself
by a time step that goes to zero as the current time approaches the
time of formation of the shock. One way to avoid this breakdown is to
use an ALE velocity that is a modified version of the Lagrangian velocity.

Many techniques have been proposed in the literature to construct an
ALE velocity.  For instance, in \cite{Gastaldi01}, the ALE velocity is
obtained by modeling the deformation of the domain as an ``elastic''
solid, see \cite[Eq. (4.5)-(4.6)]{Gastaldi01}. In \cite{YM05}, several
mesh moving strategies are mentioned, including tension spring
analogy, torsion spring analogy, truss analogy and linear elasticity
analogy. In \cite[Eq. (7)]{WBG05}, an elliptic problem is used to
construct an ALE velocity for the Euler equations. The purpose of the
present paper is not to design an optimal ALE velocity but to propose
an algorithm that is conservative and invariant domain preserving for
any reasonable ALE velocity. We now propose an algorithm to compute an
ALE velocity based on ideas from \cite{Loubere2010}. The only purpose
of this algorithm is to be able to run the nonlinear simulations of
\S\ref{Sec:Burgers} and \S\ref{Sec:KPP} past the time of formation of
shocks. We refer the reader to the abundant ALE literature to design
other ALE velocities that better suit the reader's goals.

We first deform the mesh by using the Lagrangian motion, \ie we set
$\ba^{n+1}\idlag = \ba^{n}_i + \dt \GRAD_u\bef(\bsfU_i^n)$; we recall
that $\bsfU_i^n\in\Real$ and $\GRAD_u\bef(\bsfU_i^n)\in \Real^d$ for
scalar equations. Then, given $L\in \polN\setminus\{0\}$, we define a
smooth version of the Lagrangian mesh by smoothing the position of
the geometric Lagrange nodes as follows:
\begin{equation}
\left\{
\begin{aligned}
    \ba^{n+1,0}_{i} :=& \ba^{n+1}\idlag, \ i\in\intset{1}{\Nglob}\\
      \bigg(\ba^{n+1,l}_{i} :=& \frac{1}{|\calI(\calS_i)|-1}\sum_{i\neq j\in \calI(\calS_i)}\ba^{n+1,l-1}_{j},\
i\in\intset{1}{\Nglob}\bigg), \ l\in\intset{1}{L}\\
    \ba^{n+1}\idsmooth :=& \ba^{n+1,L}_{i},\ i\in\intset{1}{\Nglob}.
\end{aligned}\right. \label{mesh_motion}
\end{equation}
Finally, the actual ALE motion is defined by  
\begin{equation*}
    \ba_i^{n+1} = \omega \ba^{n+1}\idlag + (1-\omega)\ba^{n+1}\idsmooth, \quad i\in\intset{1}{\Nglob}
\end{equation*}
where $\omega$ is a user-defined constant. In all our computations, we
use $\omega=0.9$ and $L=2$.
The above method is similar to that used in \cite{Loubere2010}. As 
mentioned in \citep{Loubere2010}, a more advanced method consists of choosing
$\omega$ pointwise by using the right Cauchy-Green strain tensor. We
have not implemented this version of the method since the purpose of
the tests in the next sections is just to show that the
present method works as it should for any reasonable ALE velocity. 
The objective of this work is not to construct a sophisticated
algorithm for the ALE velocity.

\subsubsection{Burgers equation}
\label{Sec:Burgers}
We consider the inviscid Burgers equation in two space dimensions
\begin{equation}
\partial_t u + \DIV( \tfrac12 u^2 \bbetaa) =0, \quad
u_0(\bx) = \mathds{1}_S,
\label{Burgers_equation}
\end{equation}
where $\bbetaa=(1,1)\tr$, $S$ is the unit square $(0,1)^2$, and $\mathds{1}_E$ denotes the
characteristic function of the set $E\subset \Real^d$. The solution to this problem at time
$t>0$ and at $\bx=(x_1,x_2)$ is as follows. Assume first that $x_2\le
x_1$, then define $\alpha=x_1-x_2$. Let $\alpha_0 = 1- \frac{t}{2}$. There are three cases.
If $\alpha>1$, then $u(x_1,x_2,t) =0$.
\begin{align}\text{If $\alpha\le \alpha_0$, then}\quad
 &u(x_1,x_2,t)=
\begin{cases}
\frac{x_2}{t}&  \text{if $0\le x_2 < t$} \\
1  &  \text{if $t \le x_2 < \frac{t}{2} + 1-\alpha$} \\
0 & \text{otherwise.}
\end{cases}\\
\text{If $\alpha_0<\alpha\le 1$, then}\quad 
&u(x_1,x_2,t)=\begin{cases}
\frac{x_2}{t}&  \text{if $0\le x_2 < \sqrt{2t(1-\alpha)}$} \\
0 & \text{otherwise.}
\end{cases}
\end{align}
If $x_2> x_1$, then $u(x_1,x_2,t) := u(x_2,x_1,t)$.
The computation are done up to $T=1$ in the initial computational domain
$\Dom^0 = (-0.25,1.75)^2$. The boundary of $\Dom^n$ does not move in
the time interval $(0,1)$, \ie $\front^0=\front^n$ for any $n\ge 0$.   The results of the convergence tests
are reported in Table \ref{tab:ale_brugers_new_q1_v1_jlg}.
The solution computed on a $128\CROSS 128$ 
mesh and the mesh at $T=1$ are shown in Figure
\ref{fig:ale_brugers_new_q1_v1_jlg}. 
\begin{table}[h]
\begin{center}
    \caption{Burgers equation, convergence tests, $CFL=0.1$}
    \label{tab:ale_brugers_new_q1_v1_jlg}
\begin{tabular}{|r|c|c|c|c||c|c|c|c|} \hline
 &  \multicolumn{4}{|c||}{$\polQ_1$} & \multicolumn{4}{|c|}{$\polP_1$} \\ \hline
\!\!\# dofs & 
\multicolumn{2}{|c|}{$L^2$-error} & 
\multicolumn{2}{|c||}{$L^1$-error} &
\multicolumn{2}{|c|}{$L^2$-error} & 
\multicolumn{2}{|c|}{$L^1$-error}
\\ \hline
81       & 5.79E-01 & -      & 6.00E-01 & - & 5.80E-01& -& 6.17E-01 & -\\ \hline
289     & 4.20E-01 & 0.46 & 3.88E-01 & 0.63& 4.43E-01&0.39 & 4.68E-01&0.40\\ \hline
1089   & 2.96E-01 & 0.51 & 2.32E-01 & 0.74& 3.12E-01& 0.51& 2.86E-01& 0.71\\ \hline
4225   & 2.14E-01 & 0.47 & 1.32E-01 & 0.82& 2.17E-01&0.53 &1.55E-01 & 0.88\\ \hline
16641 & 1.56E-02 & 0.45 & 7.40E-02 & 0.83& 1.23E-01
&0.82 & 7.57E-02 & 1.04\\ \hline
\end{tabular}
\end{center}
\end{table}

\begin{figure}[H]
\centering{
\includegraphics[width=0.24\textwidth,bb=87 87 513 513,clip=]{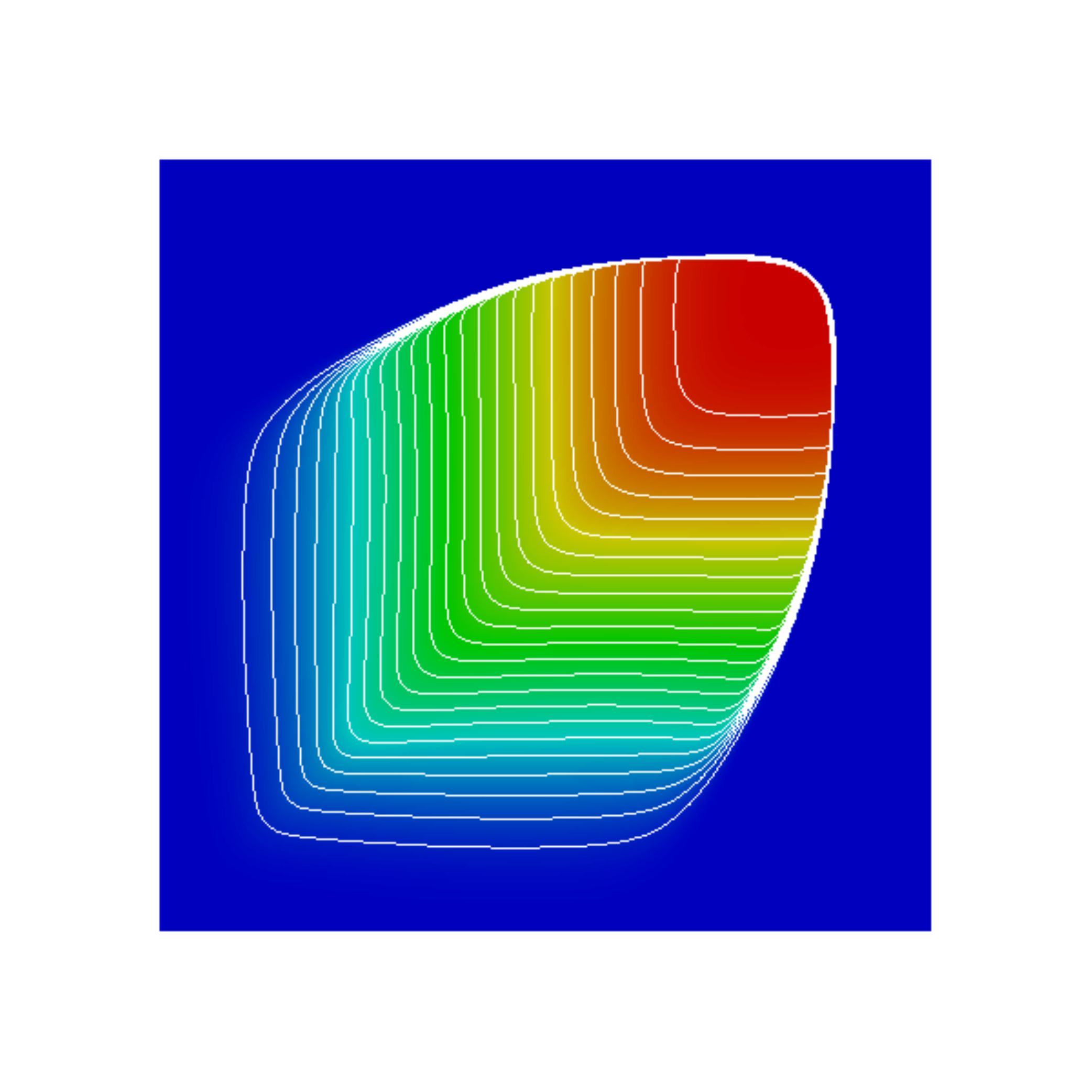}
\includegraphics[width=0.24\textwidth,bb=87 87 513 513,clip=]{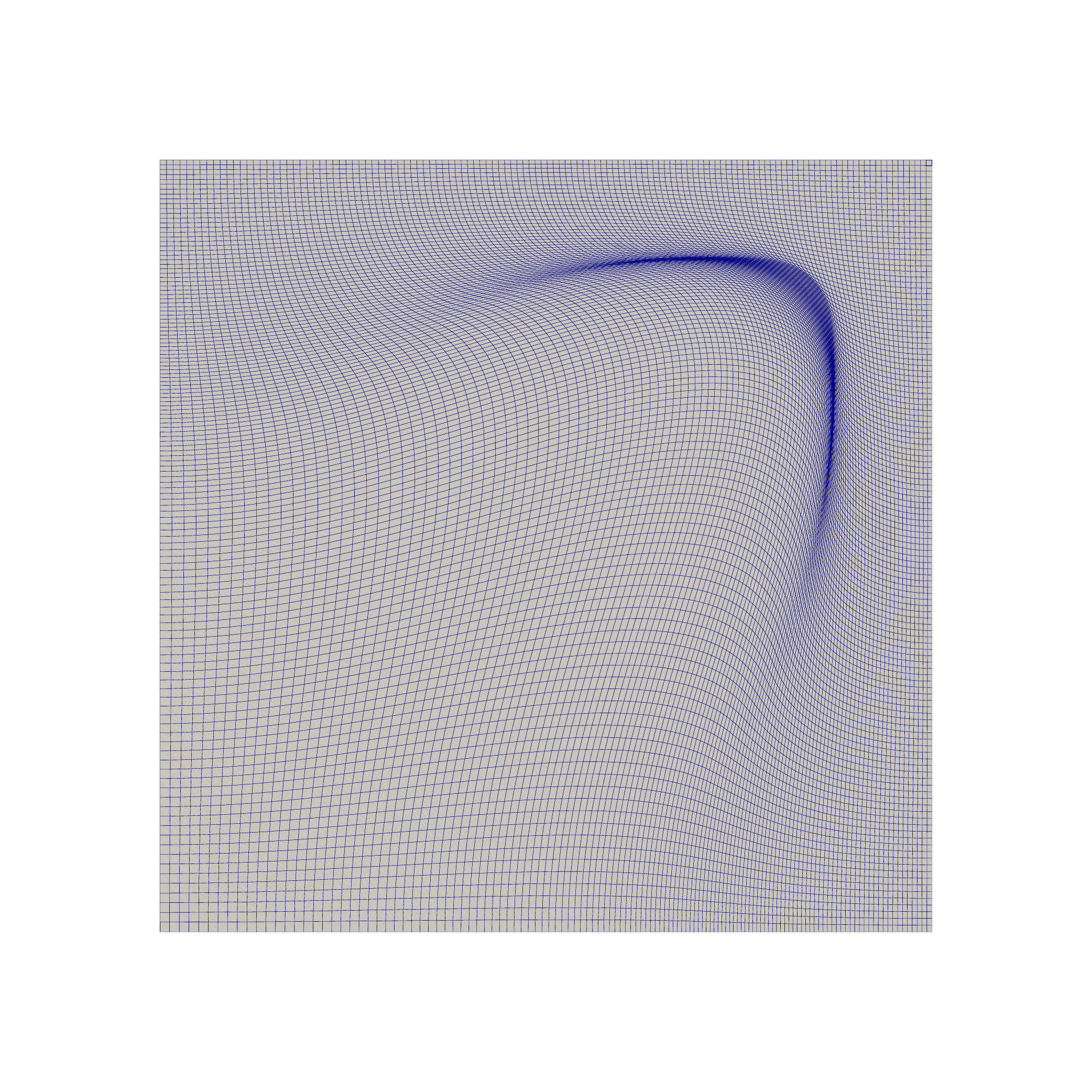}
\includegraphics[width=0.24\textwidth,bb=25 3 396 390,clip=]{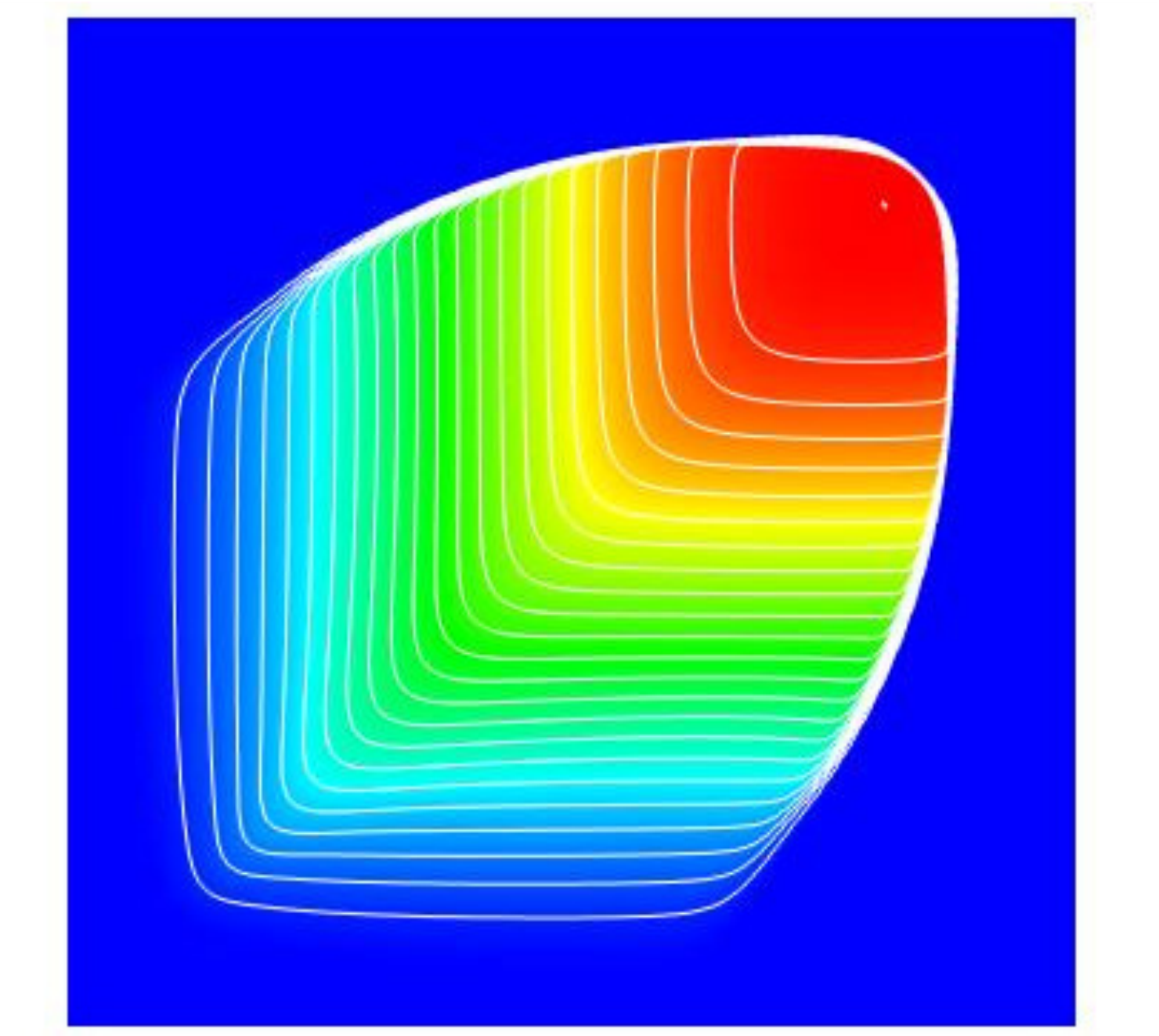}
\includegraphics[width=0.24\textwidth,bb=53 2 384 335,clip=]{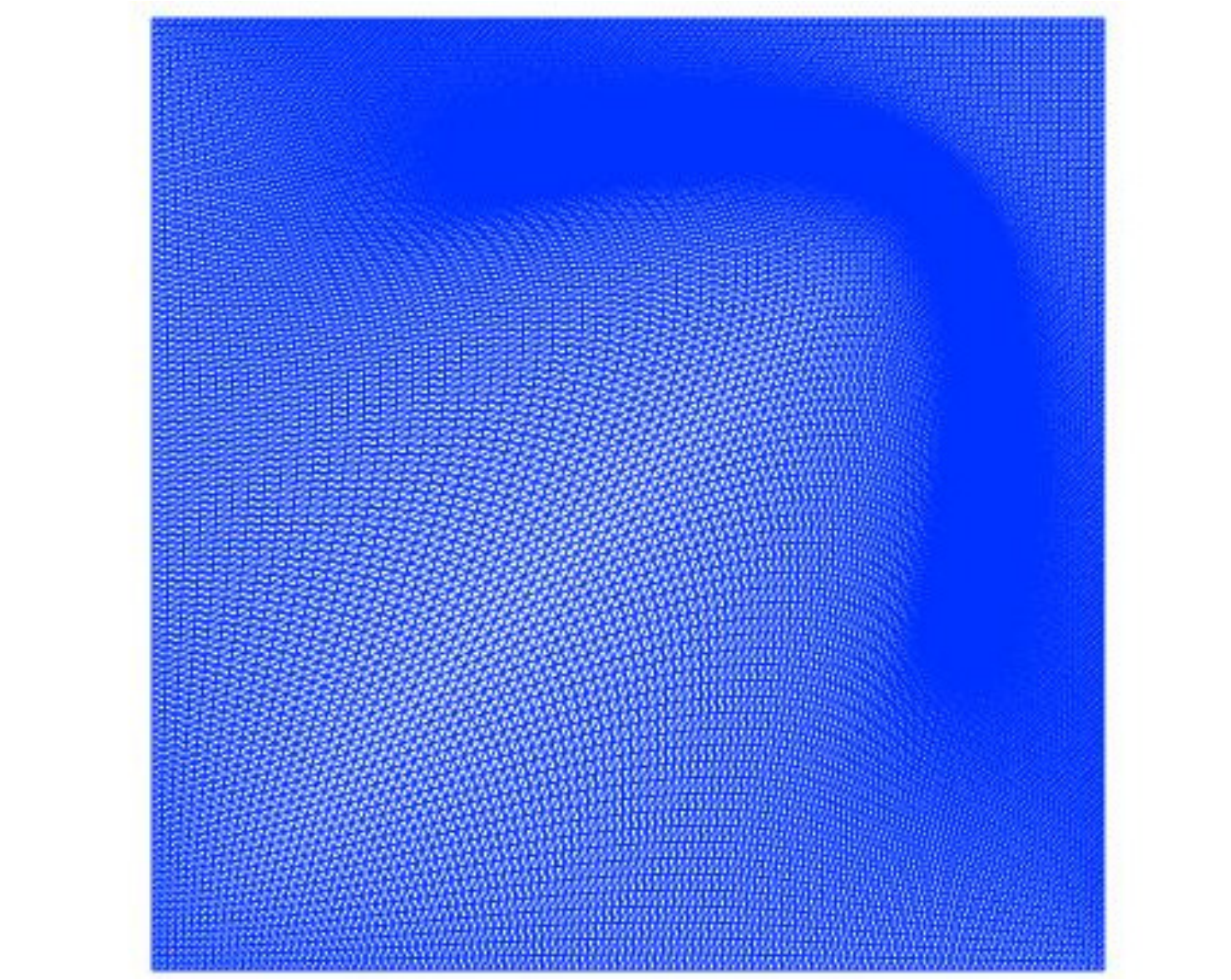}
}
\caption{Burgers equation. Left: $\polQ_1$ FEM with 25 contours; Center left: 
Final $\polQ_1$ mesh; Center right: $\polP_1$ FEM with 25 contours; Right: 
Final $\polP_1$ mesh.}
\label{fig:ale_brugers_new_q1_v1_jlg}
\end{figure}

\subsubsection{Nonconvex flux}
\label{Sec:KPP}
Our last scalar example is a nonlinear scalar conservation law with a non-convex flux 
\begin{equation}
    \label{test_KPP}
     \partial_t u + \DIV \bef(u) = 0,\quad
    u_0(\bx) = 3.25\pi\mathds{1}_{\|\bx\|_{\ell^2}<1}+0.25\pi.
\end{equation}
where $\bef(u)=(\sin u, \cos u)\tr$.  
\begin{figure}[h]
\centering{%
\includegraphics[width=0.24\textwidth,bb=100 74 500 526,clip=]{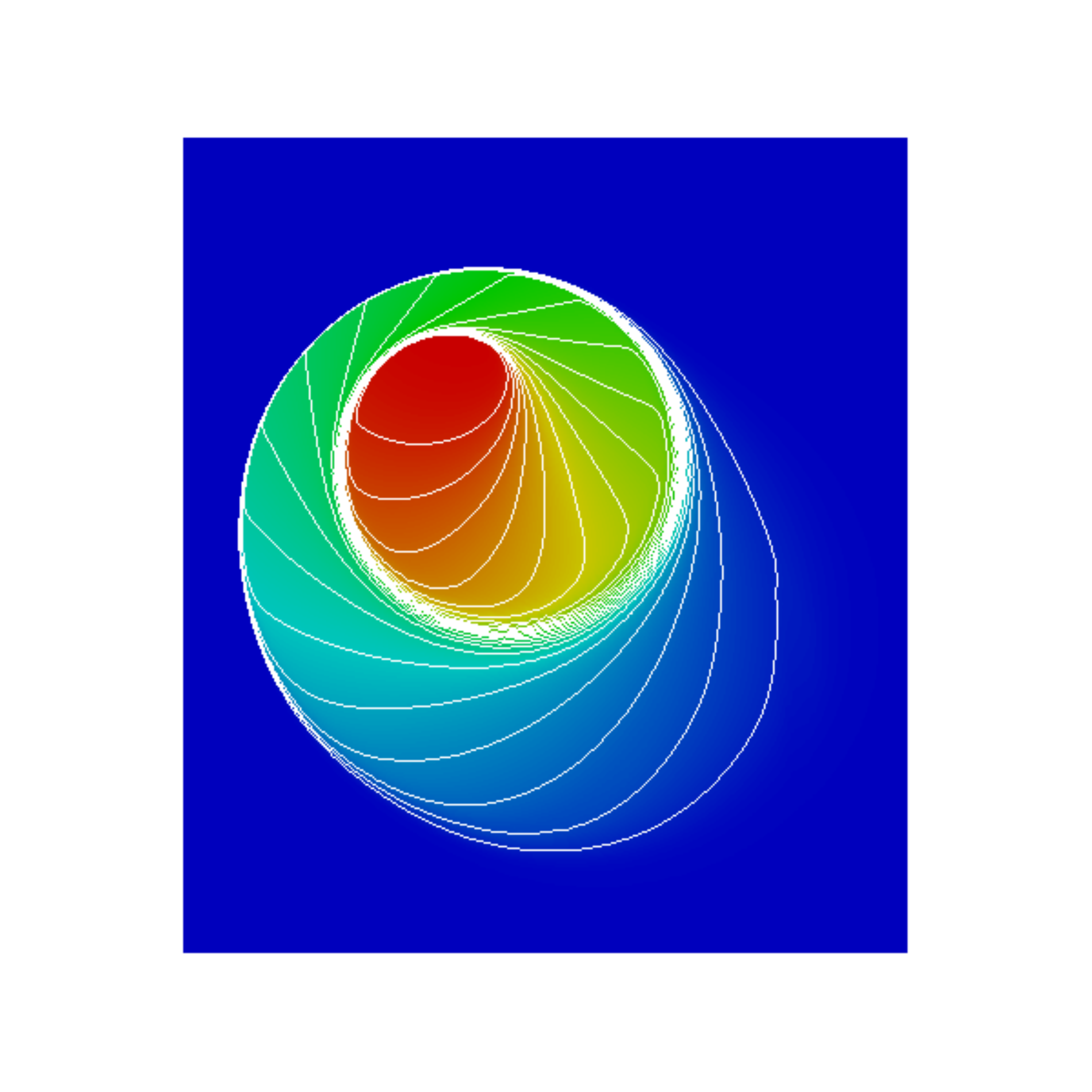}
\includegraphics[width=0.24\textwidth,bb=100 74 500 526,clip=]{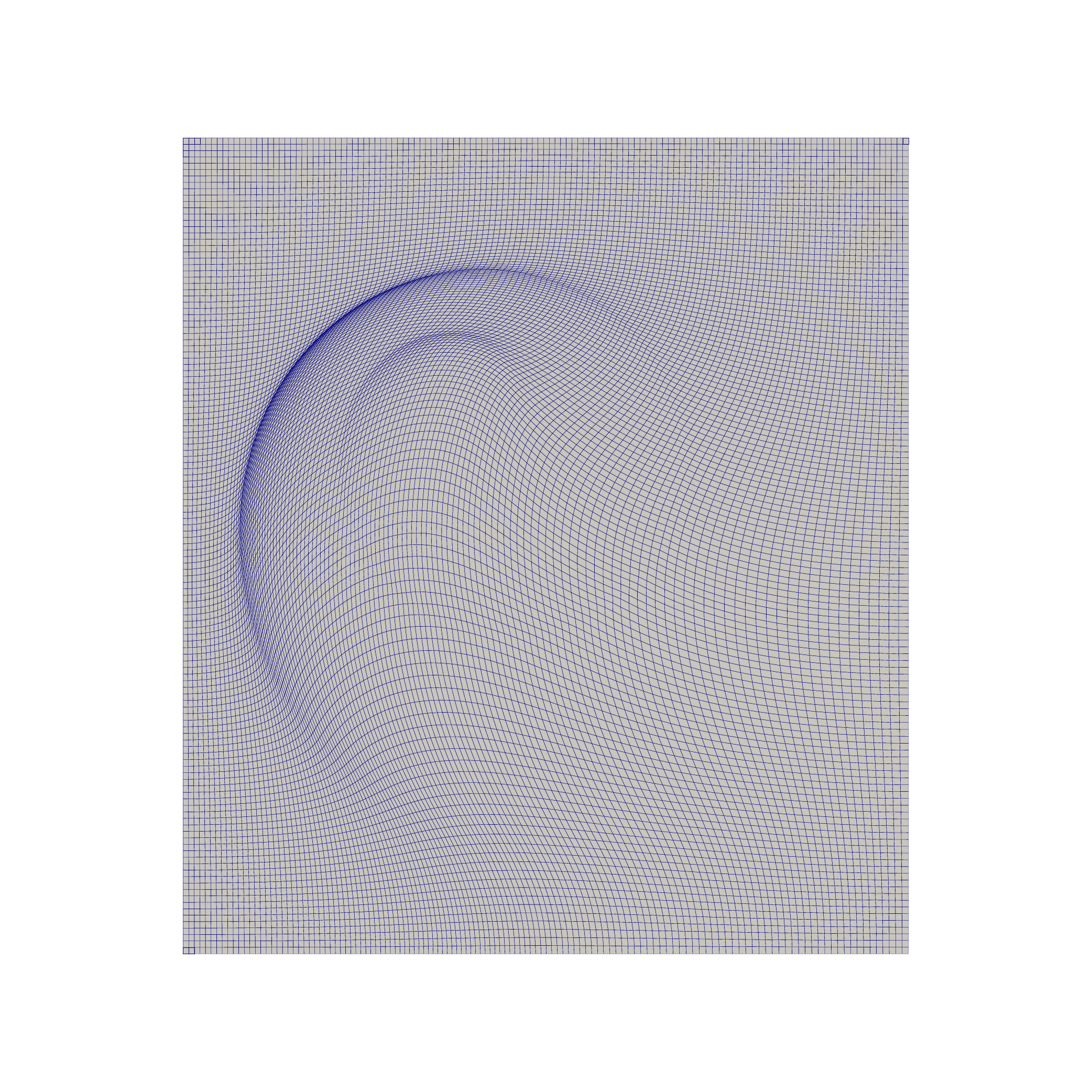}
\includegraphics[width=0.24\textwidth,bb=28 9 385 414,clip=]{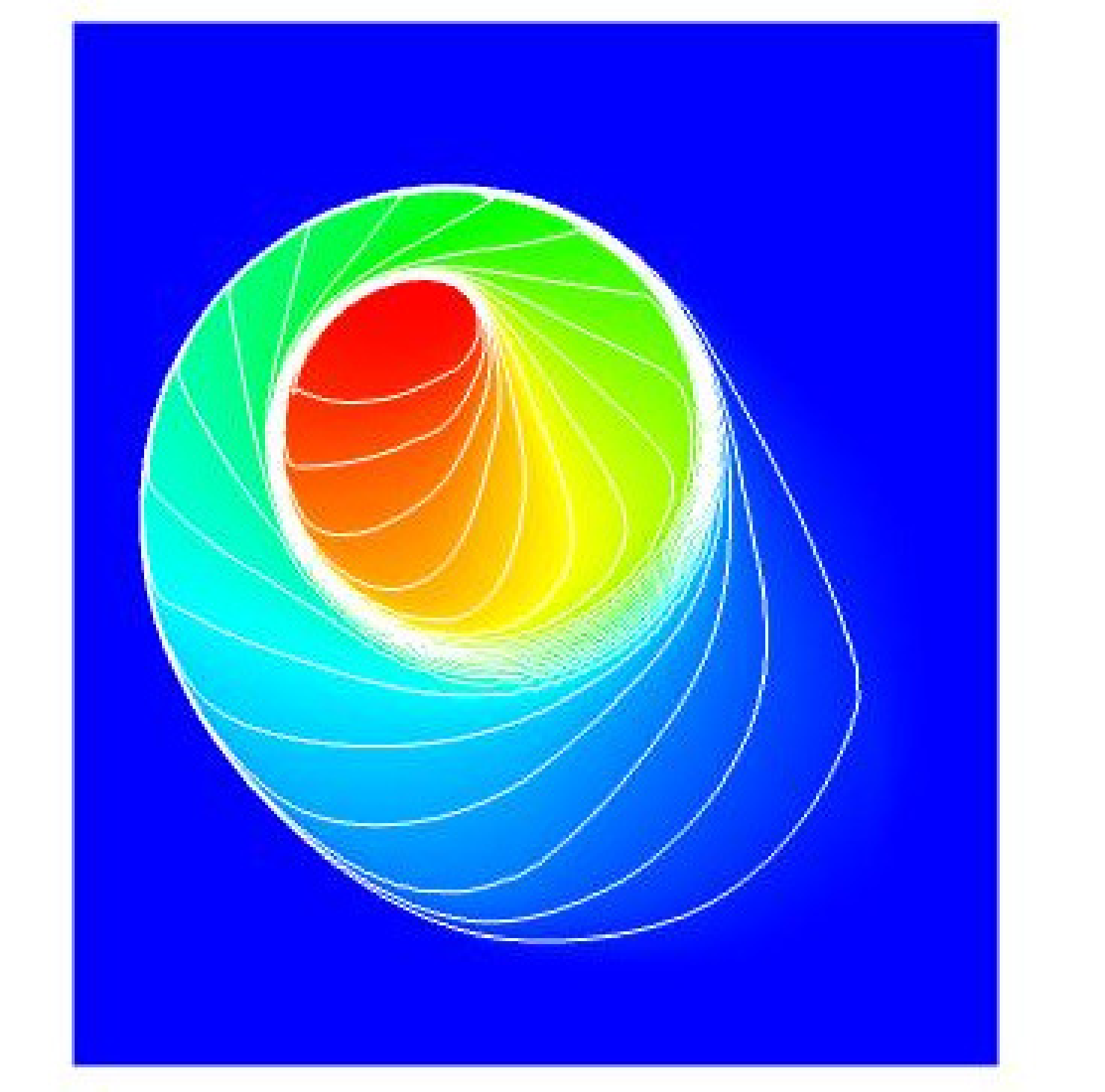}
\includegraphics[width=0.24\textwidth,bb=37 9 386 404,clip=]{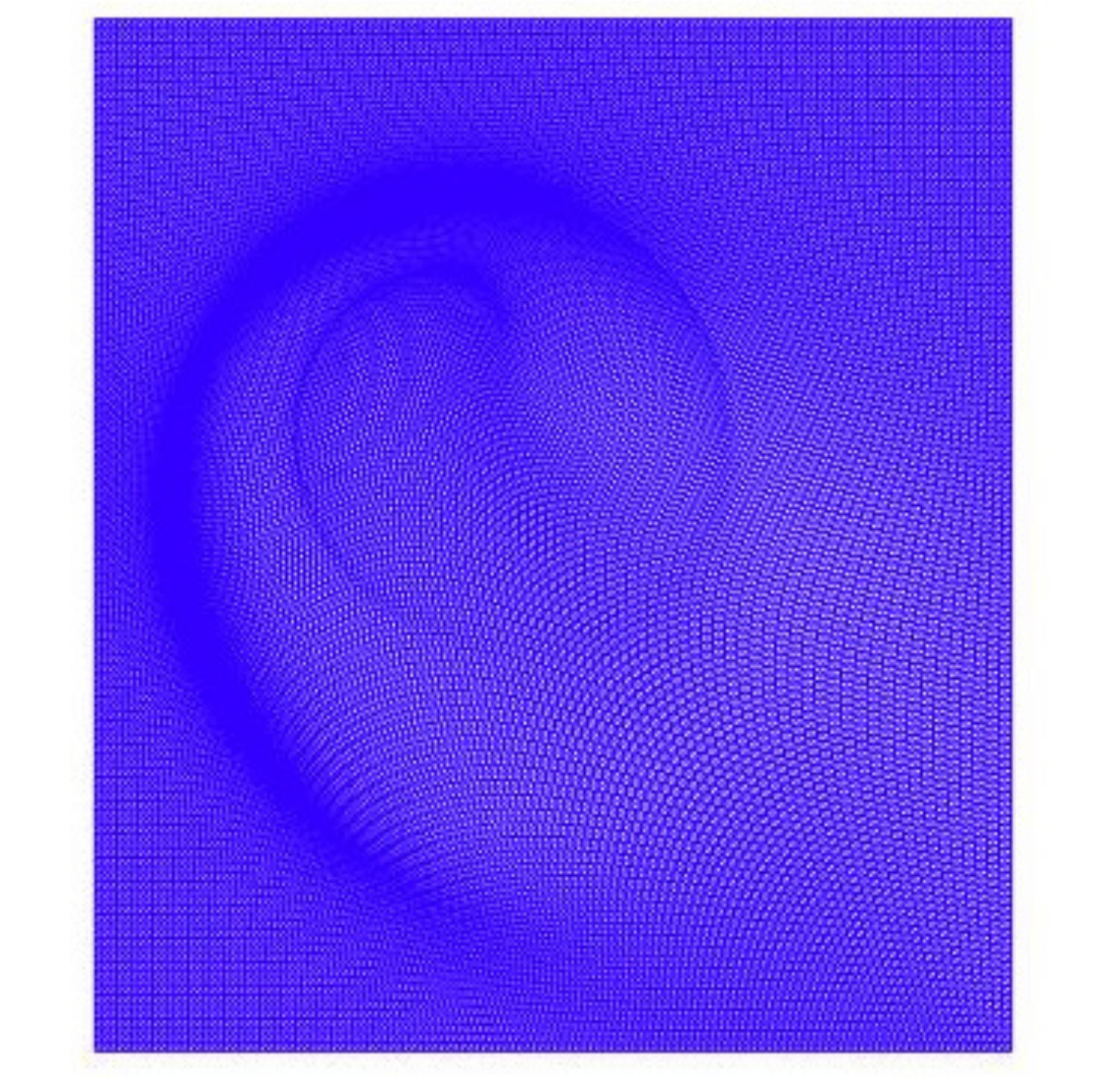}%
}
\caption{KPP problem. Left: $\polQ_1$ FEM with 25 contours; Center left: 
Final $\polQ_1$ mesh; Center right: $\polP_1$ FEM with 25 contours; Right: 
Final $\polP_1$ mesh.}
\label{fig:ale_kpp_jlg}
\end{figure}
This test, henceforth referred
to as KPP, was proposed in \cite{Kurganov_Petrova_Popov_2007}. It is a
challenging test for many high-order numerical schemes because the
solution has a two-dimensional composite wave structure.  The initial
computational domain is $\Dom^0=[-2.5,1.5]\times[-2.0,2.5]$. Note that
the background velocity is constant and equal to
$\bbetaa = (\frac{\sqrt{2}}{2},-\frac{\sqrt{2}}{2})\tr$. It can be
shown that the computational domain keeps a rectangular shape in the
time interval $(0,1)$. The computation is done up to time $T=1$ using
$\polQ_1$ and $\polP_1$ finite elements on structured meshes
$128\times128$ with $CFL=0.1$. The results are shown in
Figure~\ref{fig:ale_kpp_jlg}

\subsection{Compressible Euler equations}
\label{Sec:Euler}
We finish the series of tests by solving the compressible Euler
equations in $\mathbb{R}^2$
\begin{equation}
      \begin{cases}
        \partial_t\rho + \DIV(\rho\bu) &= 0, \\
        \partial_t(\rho \bu) + \DIV(\rho\bu\otimes\bu+p\mathds{I}) &= 0, \\
        \partial_t E + \DIV(\bu(E+p)) &= 0,
      \end{cases}
\end{equation}
 with an ideal gas equation of state, $p=(\gamma-1)(E-\frac12 \rho \|\bu\|_{\ell^2}^2)$ where $\gamma>1$, 
 and appropriate initial and boundary conditions.  The motion of the mesh is done as described in
\eqref{mesh_motion} with
$\ba^{n+1}\idlag = \ba^{n}_i + \dt \bu_h^n(\ba_i^n)$ where $\bu_h^n$
is the approximate fluid velocity at time $t^n$.

\subsubsection{Sod problem}
We use the so-called Sod shocktube problem to
test the convergence of our algorithm (version-1 only); it is a
Riemann problem with the following initial data:
\begin{equation}
      	\rho_0(\bx)= 1.0,\quad
		\bu_0(\bx) = 0.0,\quad
        p_0(\bx) = \mathds{1}_{x_1<0.5}+0.1\,\mathds{1}_{x_1>0.5}.
\end{equation}
and $\gamma=1.4$, see \eg \cite[\S4.3.3]{Toro_2009}. The
computational domain at the initial time is the unit square $(0,1)^2$.
Dirichlet boundary conditions are enforced on the left and right sides of
the domain and we do not enforce any boundary conditions on the upper
and lower sides.  The computation is done up to $T=0.2$. Since no wave
reaches the left and the right boundaries in the time interval
$0<t<T=0.2$, the computational domain remains a square for the whole
duration of the simulation.  The solution being one-dimensional, the
convergence tests are done on five meshes with refinements made only
along the $x_1$-direction.  These meshes have $20\CROSS 4$,
$40\CROSS 4$, $\ldots$, $1280\CROSS 4$ cells and are uniform
at $t=0$. The results of the convergence test are shown in Table
\ref{tab:ale_sod_v1}. We show in this table the $L^1$- and $L^2$-norm
of the error on the density. The convergence orders are compatible
with what is usually obtained in the literature for this problem.
\begin{table}[h]
\begin{center}
\caption{SOD problem, ALE, convergence test, $T=0.2$, $CFL=0.1$}
\label{tab:ale_sod_v1}
\begin{tabular}{|r|c|c|c|c||c|c|c|c|} \hline
 &  \multicolumn{4}{|c||}{$\polQ_1$} & \multicolumn{4}{|c|}{$\polP_1$} \\ \hline
\!\!\# dofs & \multicolumn{2}{|c|}{$L^2$-norm}& \multicolumn{2}{|c||}{$L^1$-norm}
& \multicolumn{2}{|c|}{$L^2$-norm}& \multicolumn{2}{|c|}{$L^1$-norm} \\ \hline
1605 & 2.47E-02 & -    & 1.51E-02 & -      & 2.82E-02&	- &1.77E-02 &	-
 \\ \hline
3205 & 1.84E-02 & 0.43 & 9.99E-03 & 0.60& 2.07E-02	&0.45 &  1.15E-02	 &0.61
 \\ \hline
6405 & 1.36E-02 & 0.42 & 6.42E-03 & 0.64& 1.56E-02	&0.40&  7.45E-03	 &0.63
 \\ \hline
12805& 1.05E-02 & 0.39 & 4.07E-03 & 0.66& 1.26E-02	&0.32 &  4.82E-03	 &0.63
 \\ \hline
\end{tabular}
\end{center}
\end{table}

\subsection{Noh problem}
The last problem that we consider is the so-called Noh problem,
see \eg \cite[\S5]{CaramanaWhalen98}.
The computational domain at the initial time is
$\Dom^0=(-1,1)^2$ and the initial data is
\begin{equation}
      	\rho_0(\bx)= 1.0,\quad
		\bu_0(\bx) = -\frac{\bx}{\|\bx\|_{\ell^2}},\quad
        p_0(\bx) = 10^{-15}.
\end{equation}
A Dirichlet boundary condition is enforced on all the dependent
variables at the boundary of the domain for the entire simulation. We
use $\gamma=\frac{5}{3}$.  The solution to this problem is known;
for instance, the density is equal to
\begin{equation}
\label{true_rho_noh}
	\rho(t,\bx) = 16\mathds{1}_{\{\|\bx\|_{\ell^2}<\frac{t}{3}\}}
+(1+\frac{t}{\|\bx\|_{\ell^2}})\mathds{1}_{\{\|\bx\|_{\ell^2}>\frac{t}{3}\}}.
\end{equation}
The ALE velocity at the boundary of the computational domain is
prescribed to be equal to the fluid velocity, \ie
the boundary moves inwards in the radial direction with speed $1$.
The final time is chosen to be $T=0.6$ in order to avoid that the
shockwave collides with the moving boundary of the computational domain
which happens at $t=\frac34$ since the shock moves radially
outwards with speed $\frac{1}{3}$.

We show in Table \ref{tab:ale_noh_v1} the $L^1$- and the $L^2$-norm of
the error on the density for various meshes which are uniform at
$t=0$: $30\CROSS 30$, $60\CROSS 60$, \etc
\begin{table}[h]
\begin{center}
    \caption{Noh problem, convergence test, $T=0.6$, $CFL=0.2$}
    \label{tab:ale_noh_v1}
\begin{tabular}{|r|c|c|c|c||c|c|c|c|} \hline
 &  \multicolumn{4}{|c||}{$\polQ_1$} & \multicolumn{4}{|c|}{$\polP_1$} \\ \hline
\!\!\!\# dofs & \multicolumn{2}{|c|}{$L^2$-norm}& \multicolumn{2}{|c||}{$L^1$-norm}
& \multicolumn{2}{|c|}{$L^2$-norm}& \multicolumn{2}{|c|}{$L^1$-norm} \\ \hline
961       & 2.60\phantom{E-01} & -    & 1.44\phantom{E-01} & -   & 2.89\phantom{E-01} & -& 1.71 & - \\ \hline
3721      & 1.81\phantom{E-01} & 0.52 & 8.45E-01 & 0.77& 2.21\phantom{E-01} &0.39 & 1.09  &0.64\\ \hline
14641     & 1.16\phantom{E-01} & 0.64 & 4.21E-01 & 1.01& 1.42\phantom{E-01}  & 0.64 &  5.15E-01 &1.08\\ \hline
58081     & 7.66E-01 & 0.60 & 2.10E-01 & 0.99& 9.39E-01  & 0.59 & 2.60E-01  &0.99 \\ \hline
\!231361\!& 5.21E-01 & 0.56 & 1.06E-01 & 0.98&6.33E-01  &0.57&  1.28E-01  &1.02
\\ \hline
\end{tabular}
\end{center}
\end{table}

Preserving the radial symmetry of the solution as best as possible on
non-uniform meshes is an important property for Lagrangian hydrocodes
in the context of the inertial confinement fusion project, which
involves simulating implosion problems, see \cite{CaramanaWhalen98}.
In these problems, mesh-induced violation of the spherical symmetry
may artificially trigger the Rayleigh-Taylor instability and thereby
may hamper the understanding of the real dynamics of the implosion.
We show in the top row of Figure \ref{fig:ale_noh_v1} simulations that
are done on a uniform mesh composed of $96\CROSS 96$ square cells for
the $\polQ_1$ approximation ($(2\CROSS 96)\CROSS (2\CROSS 96)$
triangular cells for the $\polP_1$ approximation), and we compare them
with simulations done on a nonuniform mesh constructed as follows: The
initial square $\Dom^0$ is divided in four quadrants; in the bottom
left quadrant the mesh is composed of $32 \CROSS 32$ square cells; in
the top left quadrant the mesh is composed of $32\CROSS 64$
rectangular cells; in the top right quadrant the mesh is composed of
$64\CROSS 64$ square cells; the bottom right quadrant is composed of
$64\CROSS 32$ rectangular cells. This is a generic test for many
Lagrangian hydrocodes, see \eg \cite[\S8.4]{Dobrev12}. We notice a
slight break of symmetry, but the solution does not develop any
Rayleigh-Taylor-type instability as it is often the case for many
other Lagrangian algorithms.
\begin{figure}[h]
\centerline{%
\includegraphics[width=0.24\textwidth,bb=87 87 513 513,clip=]{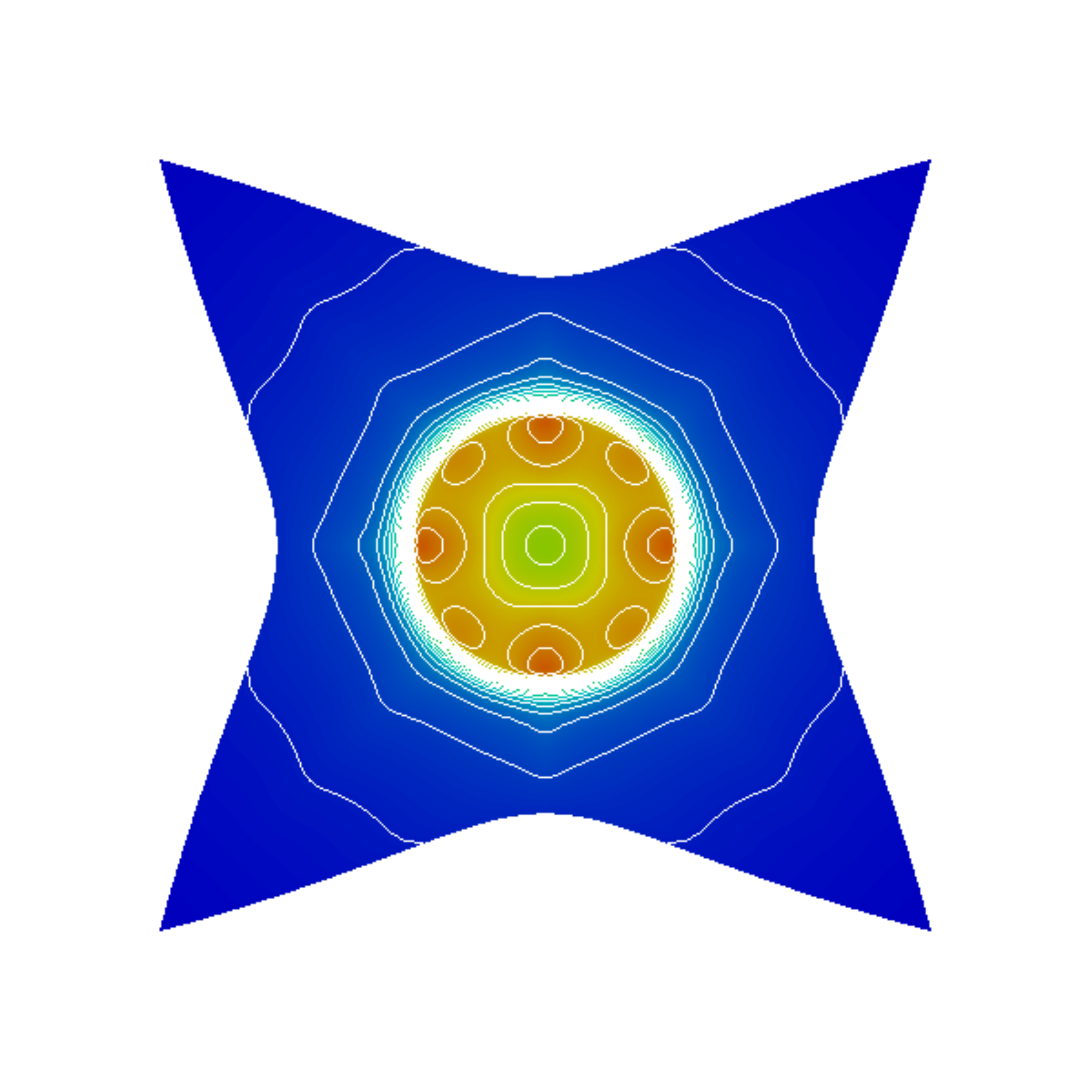}%
\includegraphics[width=0.24\textwidth,bb=87 87 513 513,clip=]{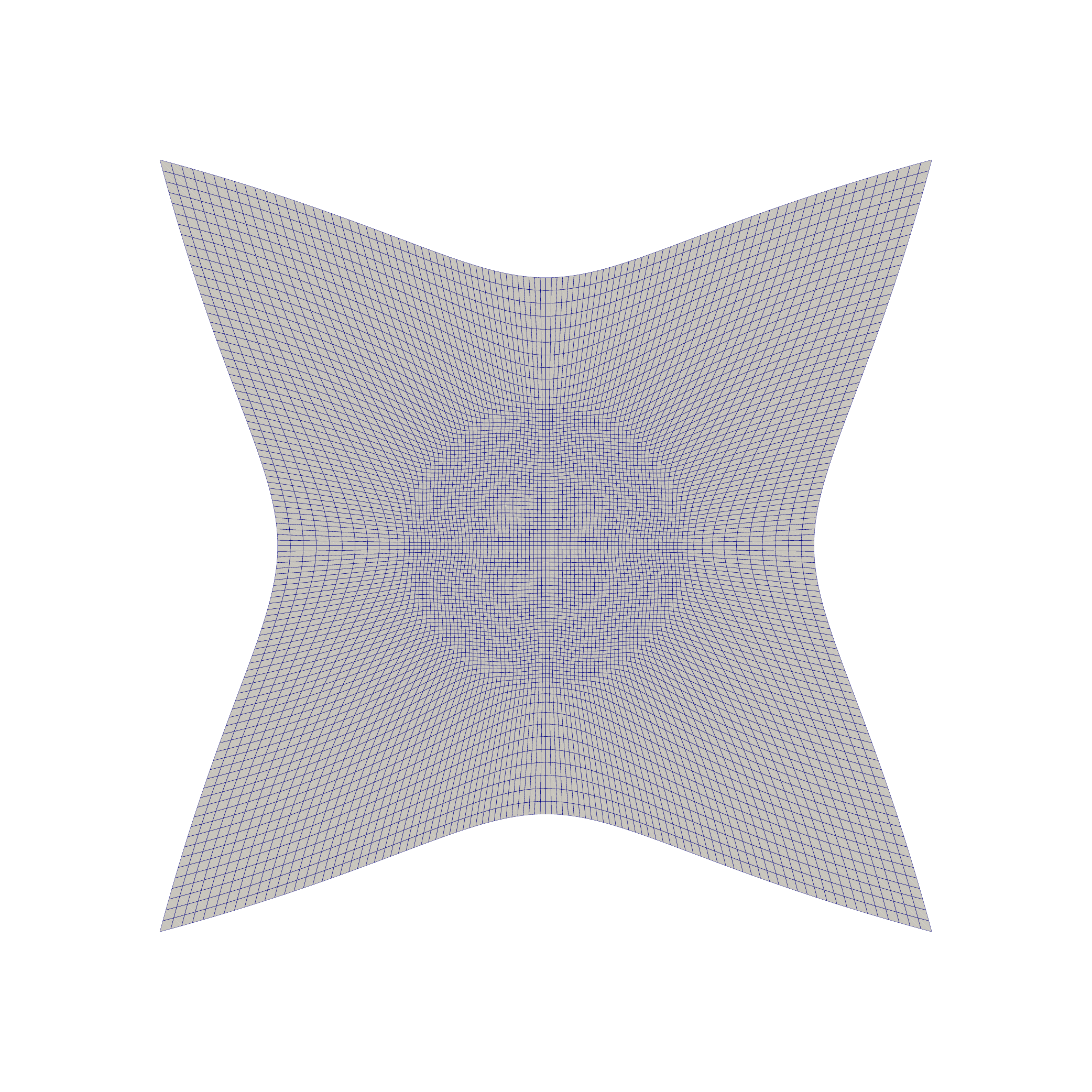}%
\includegraphics[width=0.26\textwidth]{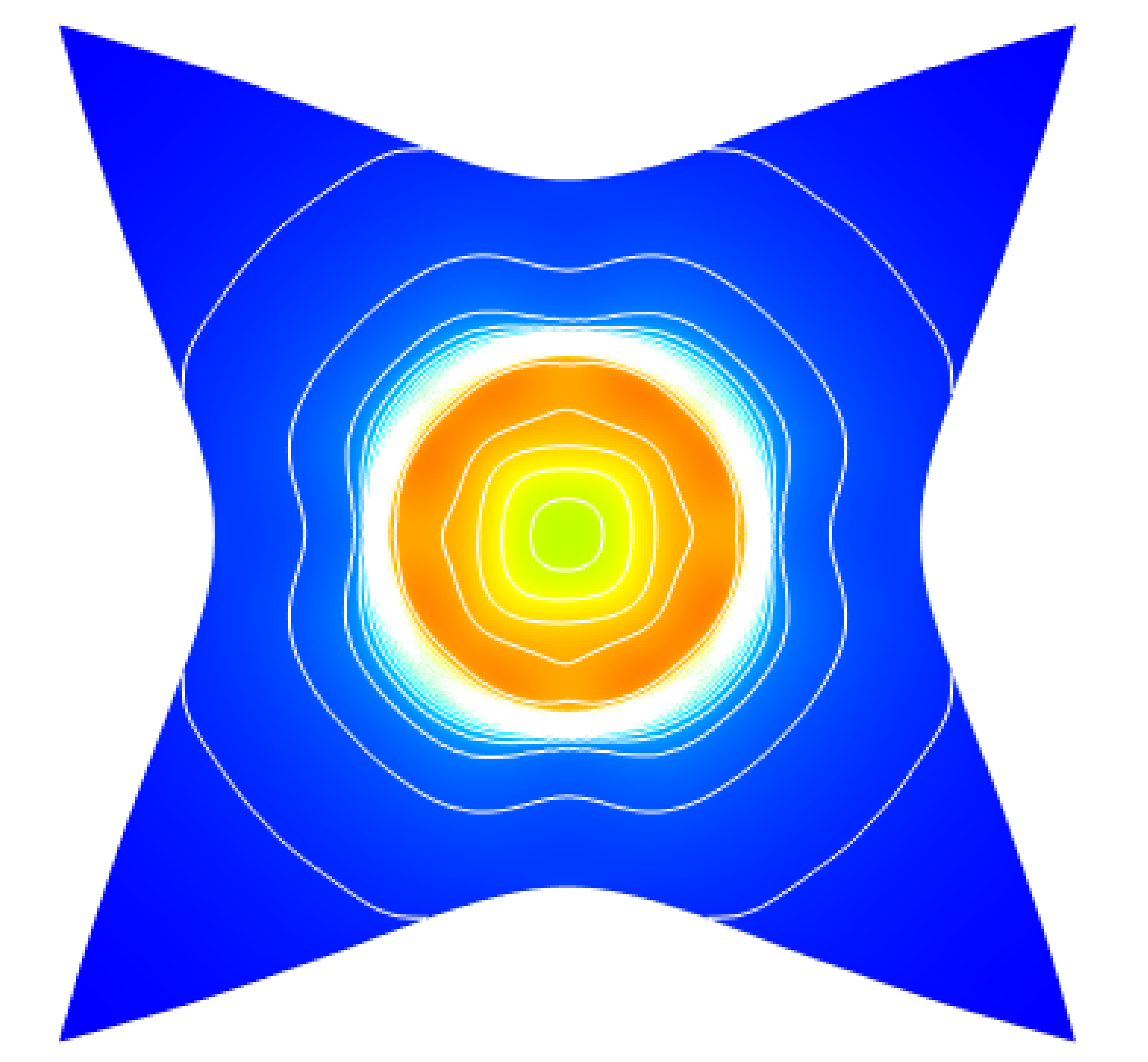}%
\includegraphics[width=0.24\textwidth]{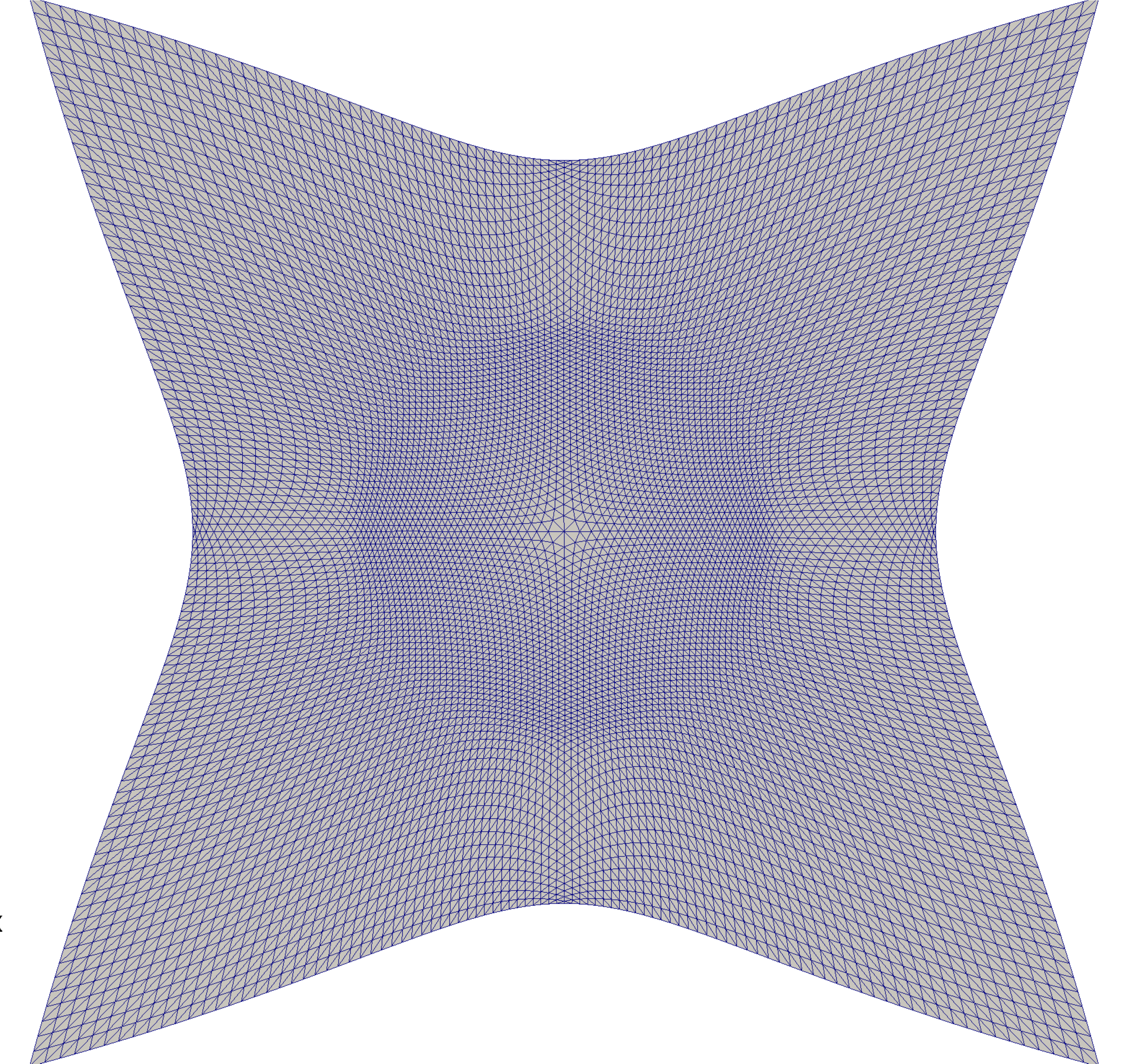}}
\centerline{%
\includegraphics[width=0.24\textwidth,bb=87 87 513 513,clip=]{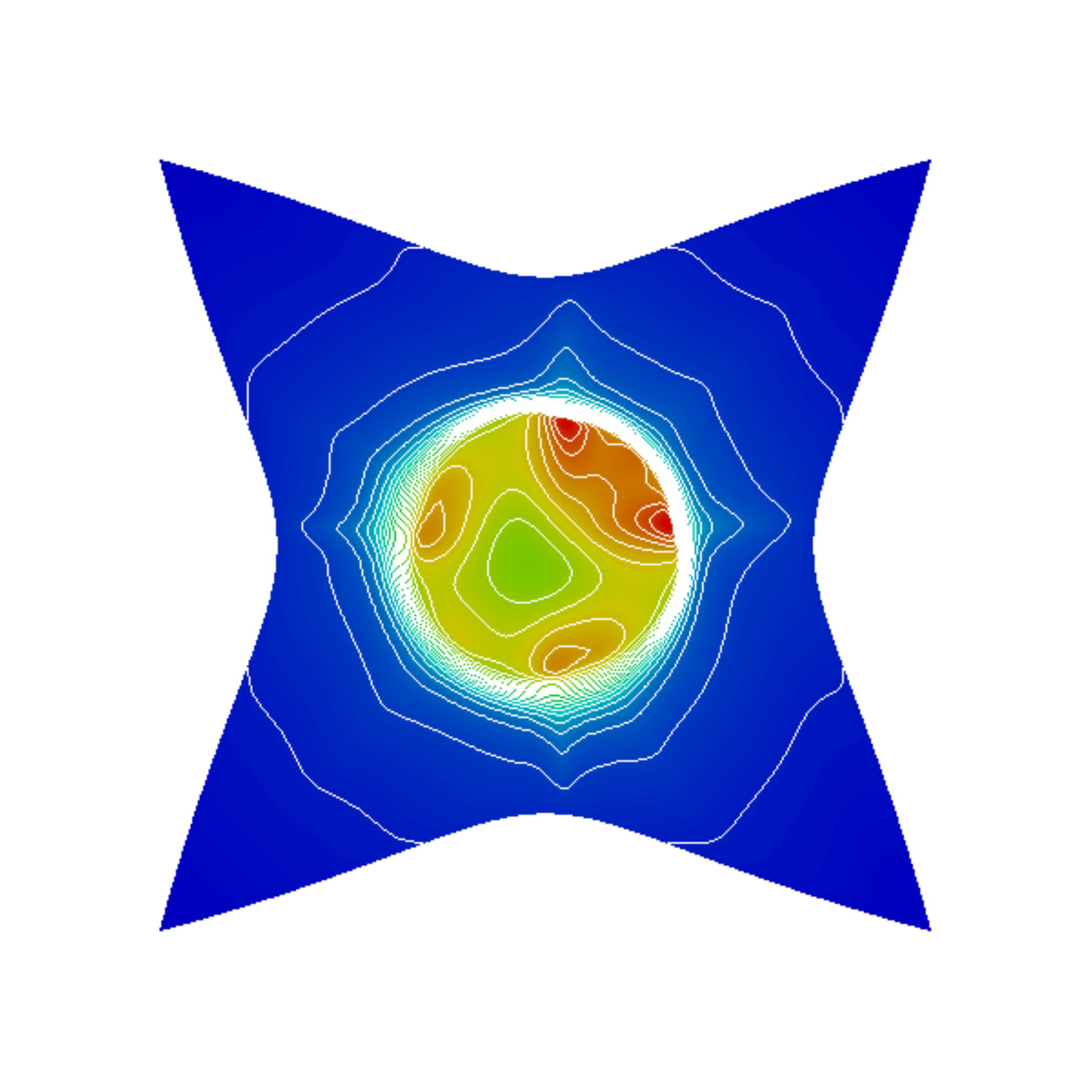}%
\includegraphics[width=0.24\textwidth,bb=87 87 513 513,clip=]{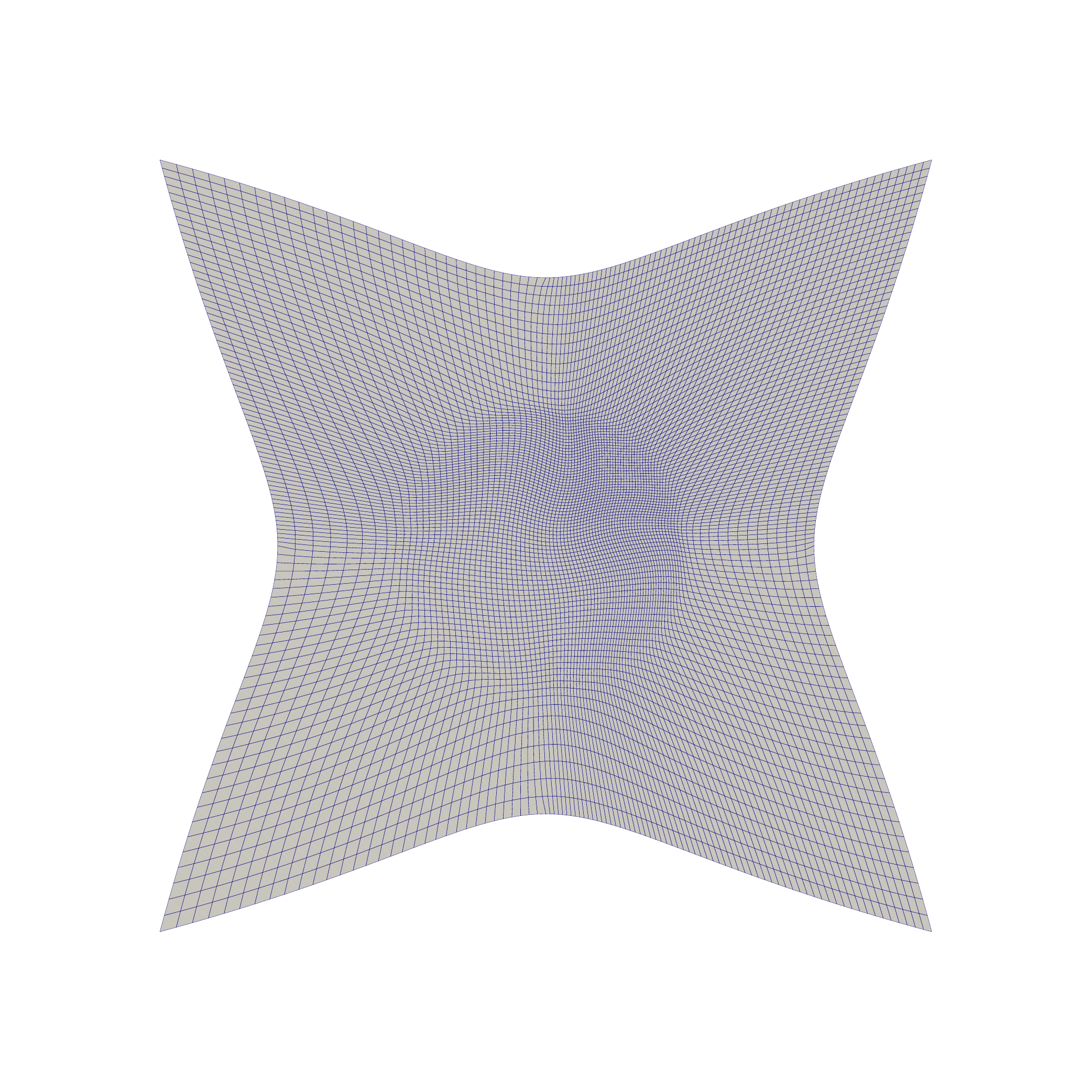}%
\includegraphics[width=0.26\textwidth]{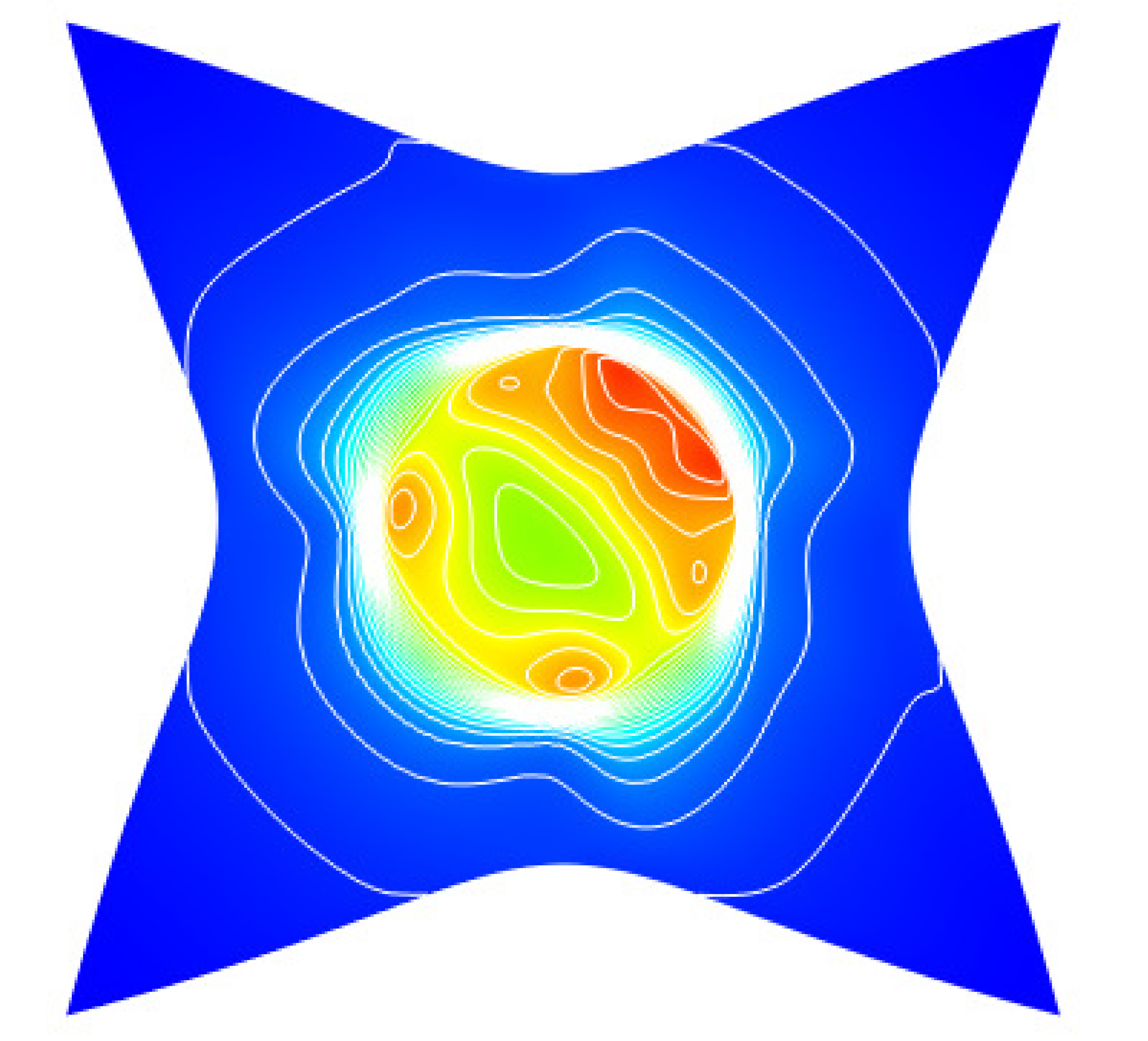}%
\includegraphics[width=0.24\textwidth]{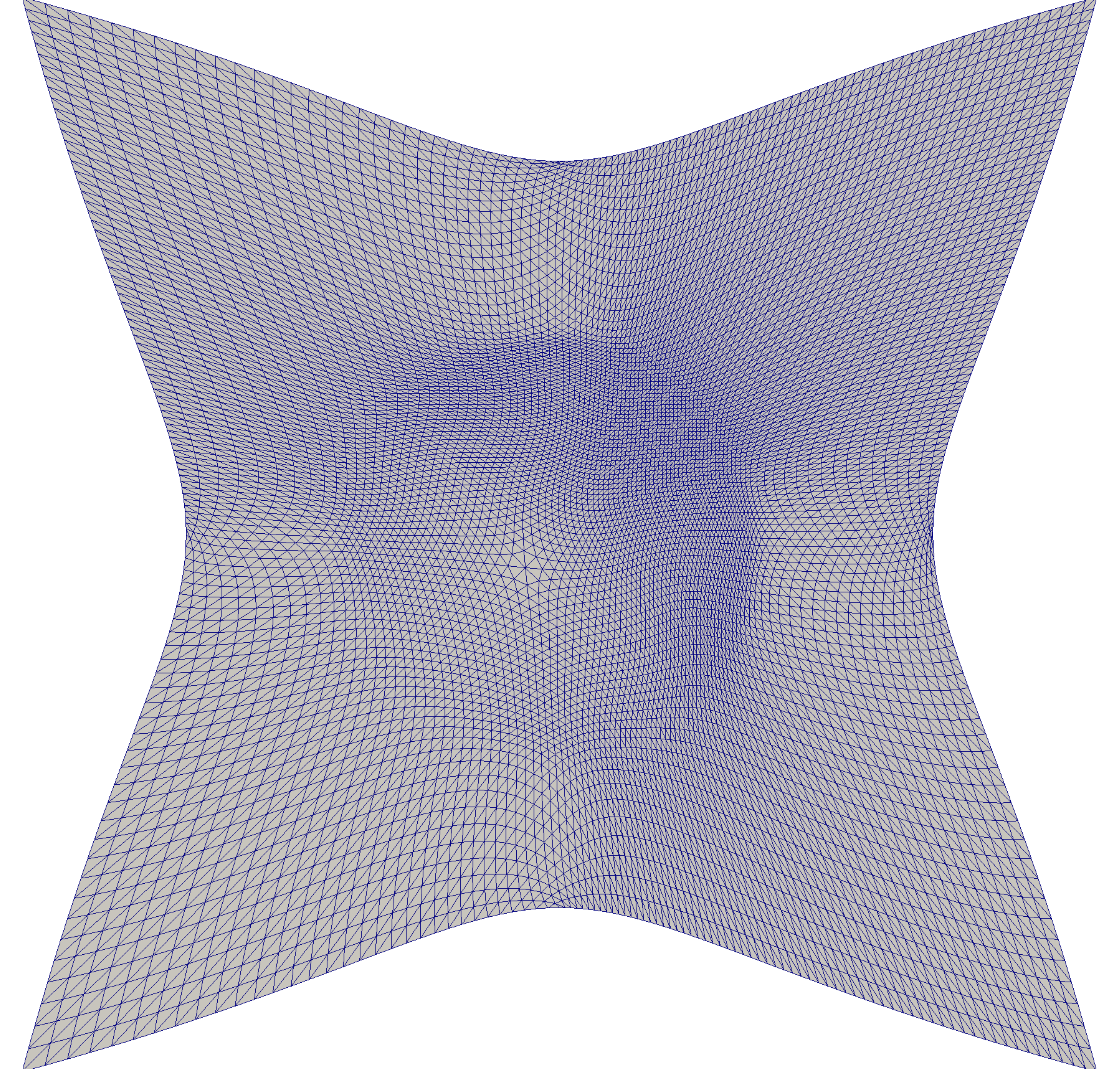}}
\caption{Noh problem. Uniform meshes in top row,
nonuniform meshes in bottom row. From left to right: density field with $\polQ_1$ approximation
(25 contour lines);
mesh with $\polQ_1$ approximation; density field with $\polP_1$ approximation (25 contour lines);
mesh with $\polP_1$ approximation.}
\label{fig:ale_noh_v1}
\end{figure}

We show in Figure~\ref{fig:ale_noh_v1_symmetry} a zoom around the
center of the computational domain for both the $\polQ_1$ and the
$\polP_1$ approximations. We notice a slight motion of the center, but
there is no dramatic breakdown of the structure of the solution.
\begin{figure}[h]
\centering{
{\includegraphics[width=0.24\textwidth]{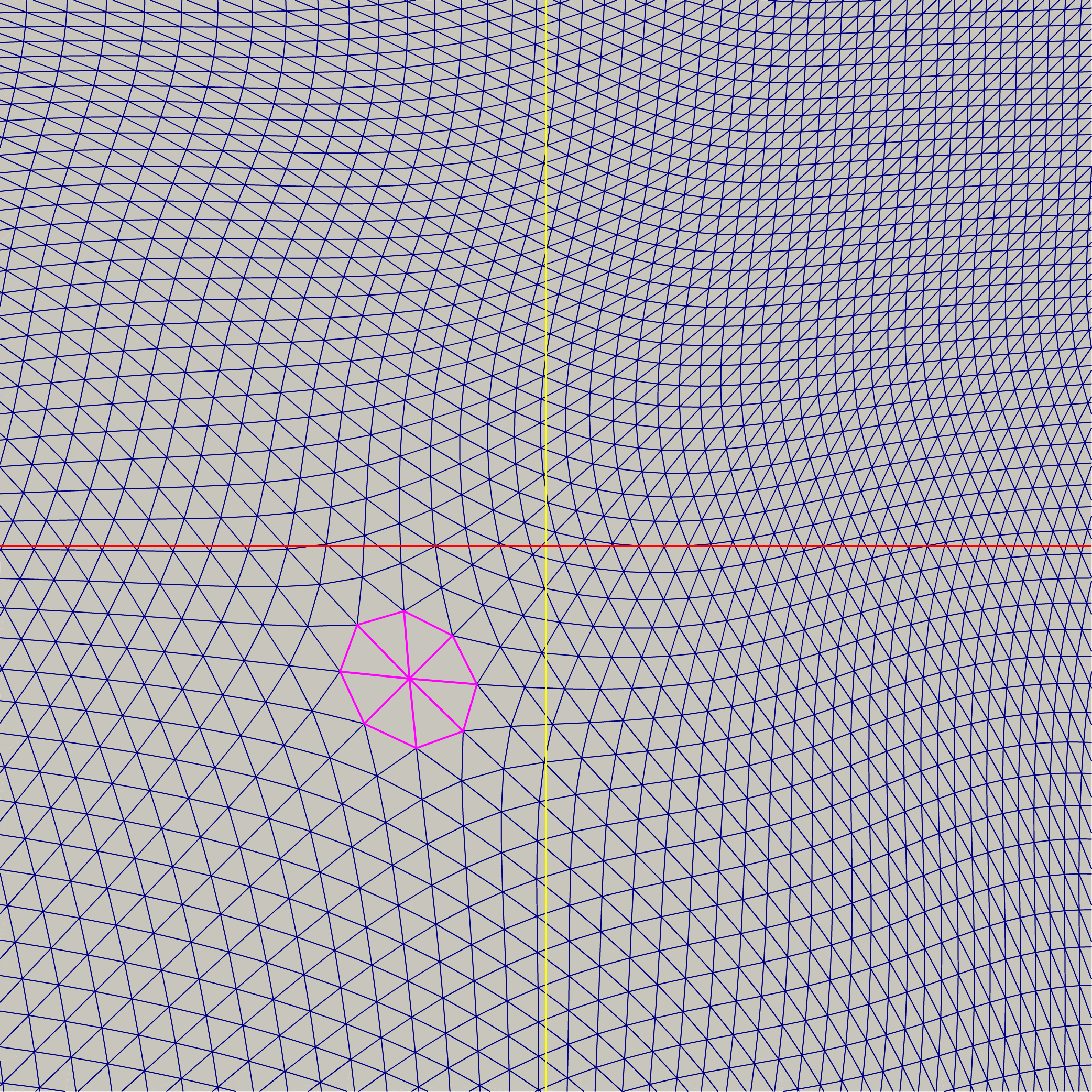}}
{\includegraphics[width=0.24\textwidth]{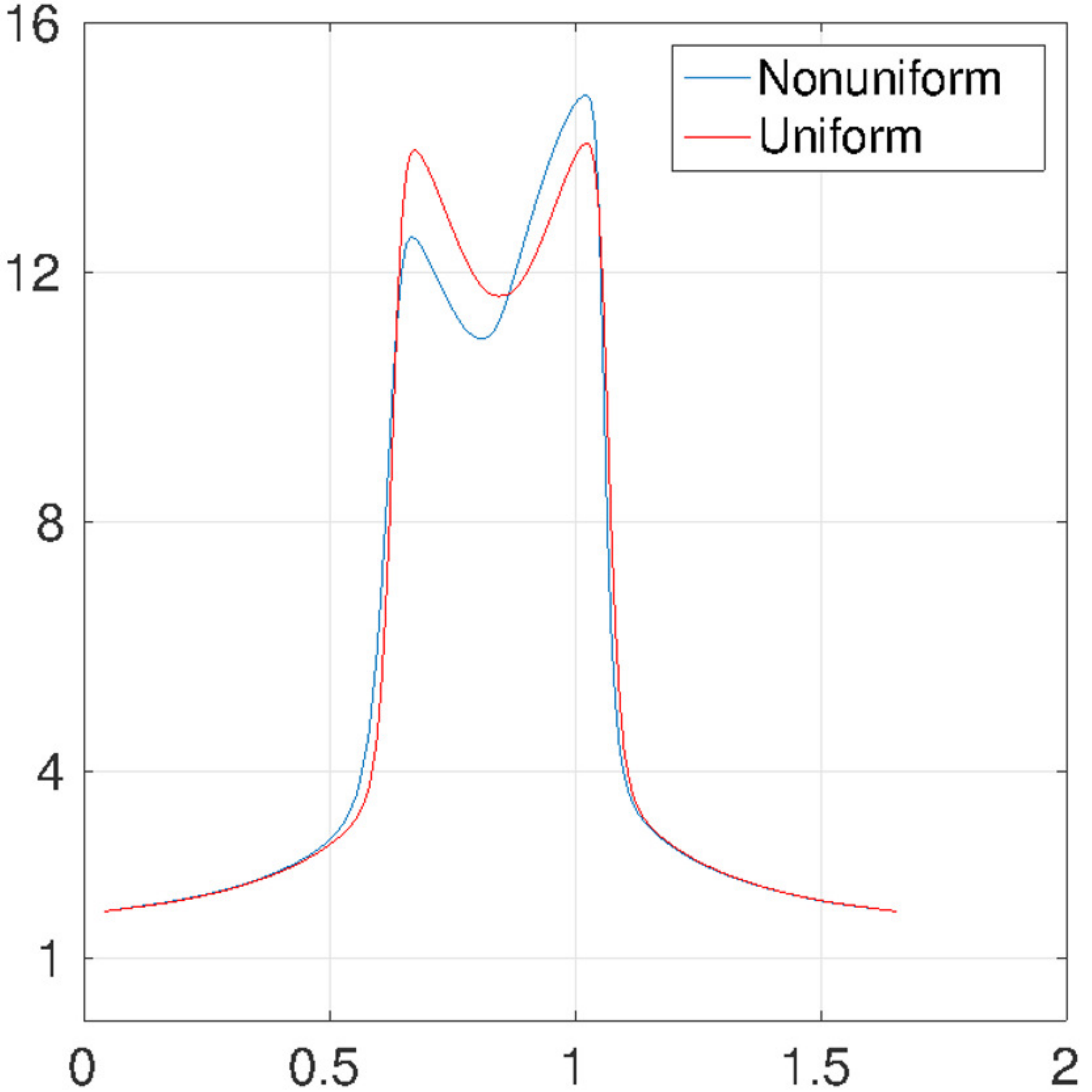}}
{\includegraphics[width=0.24\textwidth]{ALE_MESH_FINAL_LOCAL_32x64-eps-converted-to.pdf}}
{\includegraphics[width=0.24\textwidth]{ALE_SLICE-eps-converted-to.pdf}
}}
\caption{Noh problem. From left to right: Zoom
  around the center of the nonuniform $\polQ_1$ mesh at $T=0.6$ (notice the small
  displacement of the center); Cross section along the line connecting
  the points $(-1,-1)$ and $(1,1)$ for the $\polQ_1$ solutions on
  the uniform mesh and on the nonuniform mesh; 
Zoom around the center of the nonuniform $\polP_1$ mesh at $T=0.6$; 
Cross section along the line connecting
  the points $(-1,-1)$ and $(1,1)$ for the $\polP_1$ solutions on
  the uniform mesh and on the nonuniform mesh; }
\label{fig:ale_noh_v1_symmetry}
\end{figure}

\section{Concluding remarks} In this paper we have developed a
framework for constructing ALE algorithms using continuous finite
elements. The method is invariant domain preserving on any mesh in
arbitrary space dimension. The methodology applies to any hyperbolic
system which has such intrinsic property.  If the system at hand has
an entropy pair, then the method also satisfies a discrete entropy
inequality.  The time accuracy of one of the methods (scheme~1) can be
increased by using SSP time discretization techniques.  This makes the
method appropriate to use as a safeguard when constructing high-order
accurate discretization of the system which may violate the invariant
domain property. The equivalence between the conservative and
non-conservative formulations implies the DGCL condition (preservation
of constant states). The new methods have been tested on a series of
benchmark problems and the observed convergence orders and numerical
performance are compatible with what is reported in the literature.

\bibliographystyle{abbrvnat} 
\bibliography{ref}
\end{document}